\providecommand{\tabularnewline}{\\}
\numberwithin{equation}{section}
\numberwithin{figure}{section}
\theoremstyle{plain}
\newtheorem{thm}{\protect\theoremname}[section]
\theoremstyle{definition}
\newtheorem{defn}[thm]{\protect\definitionname}
\theoremstyle{plain}
\newtheorem{prop}[thm]{\protect\propositionname}
\theoremstyle{remark}
\newtheorem{rem}[thm]{\protect\remarkname}
\theoremstyle{definition}
\newtheorem{example}[thm]{\protect\examplename}
\theoremstyle{plain}
\newtheorem{fact}[thm]{\protect\factname}
\theoremstyle{plain}
\newtheorem{lem}[thm]{\protect\lemmaname}
\theoremstyle{plain}
\newtheorem{cor}[thm]{\protect\corollaryname}
\providecommand{\corollaryname}{Corollary}
\providecommand{\definitionname}{Definition}
\providecommand{\examplename}{Example}
\providecommand{\factname}{Fact}
\providecommand{\lemmaname}{Lemma}
\providecommand{\propositionname}{Proposition}
\providecommand{\remarkname}{Remark}
\providecommand{\theoremname}{Theorem}
\begin{document}
\title{Twisted Moduli Spaces and Duistermaat-Heckman Measures}
\address{University of Arizona, Department of Mathematics,
617 N Santa Rita Ave, Tucson, AZ 85721, United States }
\email{ajzerouali@math.arizona.edu}
\author{Ahmed J. Zerouali}
\begin{abstract}
Following Boalch-Yamakawa and Meinrenken, we consider a certain class
of moduli spaces on bordered surfaces from a quasi-Hamiltonian perspective.
For a given Lie group $G$, these character varieties parametrize
flat $G$-connections on ``twisted'' local systems, in the sense
that the transition functions take values in $G\rtimes\mathrm{Aut}(G)$.
After reviewing the necessary tools to discuss twisted quasi-Hamiltonian
manifolds, we construct a Duistermaat-Heckman (DH) measure on $G$
that is invariant under the twisted conjugation action $g\mapsto hg\kappa(h^{-1})$
for $\kappa\in\mathrm{Aut}(G)$, and characterize it by giving a localization
formula for its Fourier coefficients. We then illustrate our results
by determining the DH measures of our twisted moduli spaces.
\end{abstract}

\maketitle
\setcounter{section}{-1}

\section{Introduction}

The theory of quasi-Hamiltonian manifolds was initiated by Alekseev,
Malkin and Meinrenken in \cite{[AMM98]}, and one of its fundamental
results is the bijective correspondence between such spaces and Banach
manifolds with Hamiltonian actions of loop groups. On the one hand,
this theory allows one to generalize several core aspects of symplectic
geometry to the loop group setting (e.g. convexity properties, Duistermaat-Heckman
distributions \cite{[AMW00], [AMW02], [Mein05]} and geometric quantization
\cite{[Krep08], [Mein12], [Sha06a]}), and on the other hand, it provides
a finite-dimensional approach to the study of various types of moduli
spaces of flat bundles over surfaces with boundary\cite{[MeinNoDaNotes], [Boa07], [LBS15a]}.

The present paper is concerned with Hamiltonian actions of \textit{twisted}
loop groups, for a twist given by a Lie group automorphism $\kappa\in\mathrm{Aut}(G)$,
and the corresponding twisted quasi-Hamiltonian $G$-spaces \cite{[LMS17], [Mein17], [BY15], [Kno16]}.
In the latter case, the moment map takes values in a Lie group $G$,
and is equivariant with respect to the $\kappa$-twisted conjugation
action of the group on itself:
\[
\mathrm{Ad}_{g}^{\kappa}(h)=gh\kappa(g^{-1}),\ \ \forall g,h\in G,
\]
Basic examples of twisted quasi-Hamiltonian $G$-spaces include orbits
of the action $\mathrm{Ad}_{G}^{\kappa}$, as well as twisted $G$-character
varieties \cite{[Mein17], [BY15]}.

The first objective of this work is to construct and compute the Duistermaat-Heckman
(DH) measure of a twisted quasi-Hamiltonian $G$-space $M$ (tq-Hamiltonian
in short). As in the standard theory \cite{[AMW00], [AMW02], [Mein05]},
the DH measure of $M$ encodes the volumes of its symplectic quotients.
The main feature of the setup at hand is that the Fourier expansion
is given in terms of twining characters \cite{[FSS96], [Hon17b], [Zer18a]},
the class functions on $G$ with respect to twisted conjugation, which
were introduced by Fuchs, Schellekens and Schweigert in \cite{[FSS96]}.
The main result of this work is a localization formula for the Fourier
coefficients of the DH measures.

The second objective of this article is to determine the DH measures
explicitly for twisted $G$-character varieties \cite{[Mein17]}.
Similar moduli spaces were studied from the algebro-geometric perspective
\cite{[HK18], [MW09]}, in view of obtaining a generalization of the
Verlinde formula that incorporates the Dynkin diagram automorphisms
of the group involved. Following the paradigm of Alekseev-Meinrenken-Woodward
\cite{[MW98], [Mein12], [AMW01]}, a symplectic approach to deriving
such a Verlinde formula would be to compute the quantization of the
reduced spaces of twisted moduli spaces, which requires a K-theoretic
analogue of the localization formula obtained here. Another motivation
for the determination of the DH measures of twisted moduli spaces
is the study of the cohomology of their symplectic quotients, by generalizing
Witten's formulas for intersection pairings to the present setting,
and following \cite{[Wit91], [Wit92], [Loi17], [Mein05]}.

The contents of this paper are organized as follows. After reviewing
the basics of tq-Hamiltonian geometry, section \ref{Sec_tq-Ham} gives
a detailed discussion of twisted moduli spaces. The Duistermaat-Heckman
measure of tq-Hamiltonian manifold and the localization theorem are
adressed in section \ref{Sec_DH_Meas}, which ends with the computation
of DH measures of twisted moduli spaces. The first set of prerequisites
are some properties of twisted conjugation, which are summarized at
the beginning of sections \ref{Sec_tq-Ham} and \ref{Sec_DH_Meas}.
The second set of prerequisites are the Dirac geometry techniques
used in Alekseev, Bursztyn and Meinrenken's \cite{[AMB09]}. Section
\ref{App_Dirac_Geo} gathers some key results of that work in the
context of twisted conjugation.

\subsection*{Acknowledgements}

I thank my thesis advisor, Prof. Meinrenken, for introducing me to
this subject, as well as for his guidance and advice. I also thank
Professors Gualtieri and Jeffrey for their feedback and suggestions,
and Dr. Loizides for discussion. This work was partially supported
by FRQNT, OGS and Queen Elizabeth II scholarships.

\tableofcontents{}

\section{Twisted quasi-Hamiltonian manifolds\label{Sec_tq-Ham}}

In this section, we establish the geometric setup of the present work.
We start by recalling some basics of the twisted conjugation action
of a Lie group on itself in \ref{SubSec_Twist_Conj}. Next, we review
the definition of twisted quasi-Hamiltonian manifolds in \ref{SubSec_tq-Ham_mflds},
the first examples thereof, and the fundamental operations on these
spaces. These preliminaries are then illustrated in section \ref{SubSec_Moduli},
where we discuss a class of moduli spaces of flat torsors on bordered
surfaces.

\subsection{Twisted conjugation\label{SubSec_Twist_Conj}}

\subsubsection{Notation}

For a Lie group $G$ with (fixed) maximal torus $T$, we denote the
corresponding Lie algebras by $\mathfrak{g}$ and $\mathfrak{t}$.
We denote the group of automorphisms of $G$ by $\mathrm{Aut}(G)$,
and its normal subgroup of inner automorphisms by $\mathrm{Inn}(G)$.
The group of outer automorphisms $\mathrm{Out}(G)$ of $G$ is defined
by the exact sequence: 
\begin{equation}
1\to\mathrm{Inn}(G)\to\mathrm{Aut}(G)\to\mathrm{Out}(G)\to1.\label{Eq_Aut_ExctSequ}
\end{equation}
We assume that the Lie algebra $\mathfrak{g}$ is endowed with an
$\mathrm{Aut}(G)$-invariant, symmetric, and non-degenerate bilinear
form $B\in S^{2}\mathfrak{g}^{\ast}$, and we often write $\xi\cdot\zeta=B(\xi,\zeta)$
for the inner product of elements $\xi,\zeta\in\mathfrak{g}$.

We denote the set of roots of $G$ by $\mathfrak{R}\subseteq\mathfrak{t}^{\ast}$,
and the Weyl group by $W=N_{G}(T)/T$. We use real roots below: for
$\xi\in\mathfrak{t}$ and $t=e^{\xi}\in T$, we write $t^{\alpha}=e^{2\pi\mathsf{i}\langle\alpha,\xi\rangle}$
for the corresponding character. We fix a choice of positive roots
$\mathfrak{R}_{+}\subseteq\mathfrak{R}$, and along with the identification
$\mathfrak{t}\cong\mathfrak{t}^{\ast}$ given by the invariant inner
product $B$ on $\mathfrak{g}$, we denote the closed fundamental
Weyl chamber by $\mathfrak{t}_{+}^{\ast}\subseteq\mathfrak{t}^{\ast}$.
For $G$ simply connected, we denote the weight lattice by $\Lambda^{\ast}=\mathrm{Hom}(\Lambda,\mathbb{Z})$,
where $\Lambda=\ker(\exp)\cap\mathfrak{t}$ is the integral lattice,
and we denote the dominant weights by $\Lambda_{+}^{\ast}=\Lambda^{\ast}\cap\mathfrak{t}_{+}^{\ast}$.

\subsubsection{The twisted conjugation action}
\begin{defn}
Let $\kappa\in\mathrm{Aut}(G)$ be an automorphism of $G$. The $\bm{\kappa}$\textbf{-twisted
conjugation action} of $G$ on itself is given by: 
\[
\mathrm{Ad}_{g}^{\kappa}(h)=gh\kappa(g^{-1}),\ \ \forall g,h\in G.
\]
We denote by $G\kappa$ the group $G$ endowed with the action $\mathrm{Ad}_{G}^{\kappa}$.
The orbits of this action are called the $\bm{\kappa}$\textbf{-twisted
conjugacy classes} of $G$. 
\end{defn}

In terms of the semi-direct product $G\rtimes\mathrm{Aut}(G)$, twisted
conjugation corresponds to the usual conjugation action of the subgroup
$G$ on the component $G\kappa\subseteq G\rtimes\mathrm{Aut}(G)$:
\[
(g,1)(h,\kappa)(g,1)^{-1}=\left(\mathrm{Ad}_{g}^{\kappa}(h),\kappa\right).
\]
For any automorphism $\tau=\mathrm{Ad}_{a}\circ\kappa$ with $a\in G$,
the $\kappa$-twisted and $\tau$-twisted conjugacy classes are related
by the right multiplication map $R_{a}:x\mapsto xa^{-1}$, since:
\[
R_{a^{-1}}\circ\mathrm{Ad}_{g}^{(\mathrm{Ad}_{a}\circ\kappa)}=\mathrm{Ad}_{g}^{\kappa}\circ R_{a^{-1}},\ \ \forall g\in G.
\]
Thus, to study the twisted conjugation action, it is sufficient to
consider the case where $\kappa\in\mathrm{Aut}(G)$ is induced by
a Dynkin diagram automorphism.

For the remainder of this subsection, we focus on the case of $G$
compact 1-connected and simple. This setup will be of particular importance
later (Proposition \ref{Prop_Isom_Twists} and section \ref{Sec_DH_Meas}),
and several facts are worth mentioning. For more details, see \cite[\S\S 2-3]{[Zer18a]}
and references therein.

Any automorphism $\kappa\in\mathrm{Aut}(\Pi)$ of the Dynkin diagram
of $G$ permutes the simple roots $\Pi$ of $\mathfrak{R}$ and preserves
its Cartan matrix. If $\{e_{\pm\alpha}\}_{\alpha\in\mathfrak{R}_{+}}$
denote the Chevalley generators of $\mathfrak{g}_{\mathbb{C}}$, there
then exists a unique corresponding $\kappa\in\mathrm{Aut}(\mathfrak{g}_{\mathbb{C}})$,
such that $\kappa(e_{\pm\alpha})=e_{\pm\kappa(\alpha)}$ for all $\alpha\in\Pi$.
This $\kappa\in\mathrm{Aut}(\mathfrak{g}_{\mathbb{C}})$ preserves
$\mathfrak{g}\subset\mathfrak{g}_{\mathbb{C}}$, $\mathfrak{t}\subset\mathfrak{t}_{\mathbb{C}}$
and $\mathfrak{t}_{+}^{\ast}\subset\mathfrak{t}^{\ast}$, and exponentiates
to an element $\kappa\in\mathrm{Aut}(G)$ preserving the maximal torus
$T$. We hence obtain a homomorphism $\mathrm{Aut}(\Pi)\to\mathrm{Aut}(G)$
descending to $\mathrm{Out}(G)$, and on top of the identification
$\mathrm{Out}(G)\equiv\mathrm{Aut}(\Pi)$, we obtain a splitting of
the exact sequence (\ref{Eq_Aut_ExctSequ}). As such, when $G$ is
compact 1-connected and simple, we will view $\mathrm{Out}(G)$ \textit{as
a subgroup of }$\mathrm{Aut}(G)$.

\begin{table}[H]
\begin{center}%
\begin{tabular}{|c|c|c|c|c|c|c|}
\hline 
$\mathfrak{\ensuremath{R}}$ & $A_{2}$ & $A_{2n}\mbox{, }n\ge2$ & $A_{2n-1},n\ge2$ & $D_{n+1}\mbox{, }n\ge4$ & $D_{4}$ & $E_{6}$\tabularnewline
\hline 
\hline 
$\mathrm{Out}(G)$ & $\mathbb{Z}_{2}$ & $\mathbb{Z}_{2}$ & $\mathbb{Z}_{2}$ & $\mathbb{Z}_{2}$ & $S_{3}$ & $\mathbb{Z}_{2}$\tabularnewline
\hline 
\multirow{2}{*}{$\mathfrak{R}_{(\kappa)}$} & \multirow{2}{*}{$A_{1}$} & \multirow{2}{*}{$C_{n}$} & \multirow{2}{*}{$B_{n}$} & \multirow{2}{*}{$B_{n}$} & $G_{2}$ if $|\kappa|=3$ & \multirow{2}{*}{$F_{4}$}\tabularnewline
\cline{6-6} 
 &  &  &  &  & $B_{3}$ if $|\kappa|=2$ & \tabularnewline
\hline 
\end{tabular}\end{center}\caption{Non-trivial $\mathrm{Out}(G)$ and orbit root systems}
\label{Table_OutG_Orbit}
\end{table}

We now introduce the objects that control the $\kappa$-twisted conjugacy
classes of $G$. Let $G^{\kappa}$ and its maximal torus $T^{\kappa}$
denote the $\kappa$-fixed subgroups of $G$ and $T$, and let $\mathfrak{g}^{\kappa}$
and $\mathfrak{t}^{\kappa}$ denote the corresponding Lie algebras.
Let $T_{\kappa}\subseteq T$ denote the image of of the homomorphism
$T\to T$, $t\mapsto t\kappa(t^{-1})$, and let $\mathfrak{t}_{\kappa}=\mathrm{Lie}(T_{\kappa})$.
As for usual conjugation, we define the \textit{outer Weyl group}
$W^{(\kappa)}=N_{G}^{\kappa}(T^{\kappa})/T^{\kappa}$, where $N_{G}^{\kappa}(T^{\kappa})=\{g\in G\ |\ \mathrm{Ad}_{g}^{\kappa}(T^{\kappa})=T^{\kappa}\}$.
We denote by $W^{\kappa}\subseteq W$ the subgroup of elements commuting
with $\kappa$.

For non-trivial $\mathrm{Out}(G)$ (Table \ref{Table_OutG_Orbit}),
one associates the \textit{orbit root system} $\mathfrak{R}_{(\kappa)}$
to the data $(\mathfrak{R},\kappa)$. Let $p:\mathfrak{t}^{\ast}\to(\mathfrak{t}^{\ast})^{\kappa}$
denote the orthogonal projection. For $\mathfrak{R}\ne A_{2n}$, the
projection $p(\mathfrak{R})$ is an indecomposable root system, and
its dual is equivalent to $\mathfrak{R}_{(\kappa)}$. For $\mathfrak{R}=A_{2n}$
with $n>1$, the orbit root system $\mathfrak{R}_{(\kappa)}$ is equivalent
to the dual of the $B_{n}$ subsystem of $p(\mathfrak{R})=BC_{n}$.
For $\mathfrak{\mathfrak{R}}=A_{2}$, $\mathfrak{R}_{(\kappa)}=A_{1}$
is given by twice the highest root of $\mathfrak{R}$. We call the
compact simply connected form of $\mathfrak{R}_{(\kappa)}$ the \textit{orbit
Lie group}, and denote it by $G_{(\kappa)}$. Note that the Weyl group
of $\mathfrak{R}_{(\kappa)}$ coincides with $W^{\kappa}$.

With the preliminaries of the last two paragraphs, we have:
\begin{prop}
Let $G$ be compact 1-connected and simple with nontrivial $\kappa\in\mathrm{Out}(G)$.\begin{enumerate}

\item One has that $T=T^{\kappa}\cdot T_{\kappa}$, that $\mathfrak{t}=\mathfrak{t}^{\kappa}\oplus\mathfrak{t}_{\kappa}$,
and that:
\[
T^{\kappa}\cap T_{\kappa}\simeq\begin{cases}
(\mathbb{Z}_{2})^{\dim\mathfrak{t}_{\kappa}}, & |\kappa|=2;\\
\mathbb{Z}_{3}, & |\kappa|=3.
\end{cases}
\]

\item One has that $W^{(\kappa)}\simeq(T^{\kappa}\cap T_{\kappa})\rtimes W^{\kappa}$,
and that any $\kappa$-twisted conjugacy class in $G$ intersects
$T^{\kappa}$ in an orbit of $W^{(\kappa)}$.

\item The orbit Lie group is related to $G$ by the isomorphism $^{L}G_{(\kappa)}\simeq(^{L}G)_{0}^{\kappa}$,
and its maximal torus is isomorphic to $T^{\kappa}/(T^{\kappa}\cap T_{\kappa})$
($\phantom{.}^{L}K$ denotes the Langlands dual of $K$, and $K_{0}\subseteq K$
the identity component).

\item The weight lattice of $G_{(\kappa)}$ coincides with $(\Lambda^{\ast})^{\kappa}=\Lambda^{\ast}\cap(\mathfrak{t^{\ast}})^{\kappa}$,
the lattice of $\kappa$-fixed weights of $G$. The integral lattice
of $G_{(\kappa)}$ coincides with the lattice $\exp_{\mathfrak{t}^{\kappa}}^{-1}(T^{\kappa}\cap T_{\kappa})\subset\mathfrak{t^{\kappa}}$.

\item The $\kappa$-twisted conjugacy classes of $G$ are parametrized
by the fundamental alcove of the orbit group $G_{(\kappa)}$.

\end{enumerate}
\end{prop}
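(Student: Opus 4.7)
The plan is to handle (1) by direct analysis of $\kappa$ as an orthogonal transformation of $(\mathfrak{t},B|_\mathfrak{t})$, then deduce (2) from the maximal torus theorem applied to the (possibly disconnected) extension $G\rtimes\langle\kappa\rangle$, and finally to read off (3)--(5) from standard orbit root system duality.

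For (1), since $\kappa$ has finite order and preserves $B$, the restriction $\kappa|_\mathfrak{t}$ is semisimple and orthogonal, so $\mathfrak{t}=\mathfrak{t}^\kappa\oplus(\mathfrak{t}^\kappa)^\perp$. The differential of $t\mapsto t\kappa(t^{-1})$ is $\xi\mapsto(1-\kappa)\xi$, whose image is exactly $(\mathfrak{t}^\kappa)^\perp$; this identifies the perpendicular complement with $\mathfrak{t}_\kappa$ and, after exponentiating and using connectedness of $T$, yields $T=T^\kappa\cdot T_\kappa$. The finite group $T^\kappa\cap T_\kappa$ is then computed by lifting to $\mathfrak{t}$ and analyzing the interaction of $(1-\kappa)$ with the integral lattice $\Lambda$, a small case-by-case verification guided by Table \ref{Table_OutG_Orbit}: in each involutive case one finds an elementary abelian $2$-group whose rank equals $\dim\mathfrak{t}_\kappa$, while the triality case in $D_4$ produces the expected $\mathbb{Z}_3$.

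For (2), I would apply the classical maximal torus theorem to $G\rtimes\langle\kappa\rangle$: every element of the component $G\kappa$ is $G$-conjugate into a maximal torus of the fixed-point subgroup $G^\kappa$, giving a representative in $T^\kappa\kappa$, so any $\kappa$-twisted conjugacy class meets $T^\kappa$. The residual freedom over $T^\kappa$ is exactly the quotient $W^{(\kappa)}=N_G^\kappa(T^\kappa)/T^\kappa$. To identify $W^{(\kappa)}$ as a semidirect product, I would decompose a general $g\in N_G^\kappa(T^\kappa)$ using (1) into a piece in $N_{G^\kappa}(T^\kappa)$ (contributing $W^\kappa$ after modding out $T^\kappa$) and a translational part $t\in T_\kappa$, whose value $t\kappa(t^{-1})\in T^\kappa\cap T_\kappa$ accounts for the second factor; the splitting follows by exhibiting canonical lifts.

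Parts (3)--(5) are then formal consequences. The orbit root system $\mathfrak{R}_{(\kappa)}$ is built so that its coroots encode the $\kappa$-averaged coroots of $\mathfrak{R}$, which identifies its root datum with that of $(^{L}G)_0^\kappa$; the maximal torus identification is the compact version of the Lie algebra decomposition from (1) combined with the quotient by the disconnected group $T^\kappa\cap T_\kappa$. Part (4) is immediate by dualization, since the weight lattice of $G_{(\kappa)}$ is the character lattice of its maximal torus $T^\kappa/(T^\kappa\cap T_\kappa)$. Finally, (5) follows: twisted conjugacy classes correspond to $W^{(\kappa)}$-orbits in $T^\kappa$, which by (3)--(4) are in bijection with orbits of the affine Weyl group of $G_{(\kappa)}$ on $\mathfrak{t}^\kappa$, whose fundamental domain is the fundamental alcove of $G_{(\kappa)}$. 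The main obstacle I anticipate is the careful bookkeeping in (1)--(2), especially tracking the interaction of $(1-\kappa)$ with $\Lambda$ to compute the torsion in $T^\kappa\cap T_\kappa$, and then using this information to split the outer Weyl group extension.
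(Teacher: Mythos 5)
Your proposal aims to give a self-contained proof, whereas the paper does not actually prove this proposition: statements (1)--(4) are cited from the literature (Steinberg, de Siebenthal, Mohrdieck, Wendt, Kac--Liu--Peterson, via \cite{[Zer18a]} and \cite{[KLP09]}), and (5) is a two-line consequence of the preceding parts together with the alcove description of \cite{[MW04]}. Your overall strategy is sound and follows the standard route, but there are several gaps that separate a sketch from a proof.

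First, both (1) and (2) implicitly use the fact that, for $G$ compact and 1-connected, $G^{\kappa}$ is connected and $T^{\kappa}=\exp(\mathfrak{t}^{\kappa})$ is a maximal torus of $G^{\kappa}$; this is Steinberg's theorem, and without it your appeal to ``the classical maximal torus theorem'' for $G\rtimes\langle\kappa\rangle$ and the equality $W^{\kappa}=W_{G^{\kappa}}$ (needed to build the section $W^{\kappa}\hookrightarrow W^{(\kappa)}$) both fail. Second, the computation of $T^{\kappa}\cap T_{\kappa}$ is the technical heart of (1), and ``a small case-by-case verification guided by Table \ref{Table_OutG_Orbit}'' is not itself an argument: one must actually identify the cokernel of $(1-\kappa)$ on the coweight/integral lattice, and for instance distinguish $|\kappa|=2$ on $A_{2n}$ from $|\kappa|=2$ on $A_{2n-1}$, where the answers have different ranks. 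Third, for (3) your statement that ``the orbit root system is built so that its coroots encode the $\kappa$-averaged coroots of $\mathfrak{R}$'' is too coarse: when $\mathfrak{R}=A_{2n}$ the folding produces the non-reduced system $BC_{n}$, and $\mathfrak{R}_{(\kappa)}$ is defined as the dual of its $B_{n}$ sub-system rather than of the full projection; the Langlands-dual identification $^{L}G_{(\kappa)}\simeq(^{L}G)^{\kappa}_{0}$ and the torus identification with $T^{\kappa}/(T^{\kappa}\cap T_{\kappa})$ require this adjustment to be worked through explicitly. Once those points are filled in, parts (4) and (5) do follow as you describe.
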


In this proposition, statement (3) is discussed for $G$ complex in
\cite{[KLP09]}. Parts (1), (2) and (4) are contained in Lemma 3.5,
Theorem 3.6, Proposition 3.9 and Proposition 2.4 in \cite{[Zer18a]}
(see references therein for the original authors of these results).
For statement (5), we note that the affine Weyl group $\Lambda\rtimes W$
is replaced by $\exp_{\mathfrak{t}^{\kappa}}^{-1}(T^{\kappa}\cap T_{\kappa})\rtimes W^{\kappa}$
when dealing with twisted conjugation. The orbit space $G/\mathrm{Ad}_{G}^{\kappa}$
identifies with $T^{\kappa}/W^{(\kappa)}\simeq\mathfrak{t}^{\kappa}/\left(\exp_{\mathfrak{t}^{\kappa}}^{-1}(T^{\kappa}\cap T_{\kappa})\rtimes W^{\kappa}\right)$,
which is the fundamental alcove of $G_{(\kappa)}$ by statements (2)-(4).
An explicit description of this alcove is given in \cite[\S 3]{[MW04]}
(c.f. \cite[Prop.3.10]{[Zer18a]}).

\subsection{Quasi-Hamiltonian geometry\label{SubSec_tq-Ham_mflds}}

\subsubsection{Definition and first examples}

Let $\theta^{L},\theta^{R}\in\Omega^{1}(G,\mathfrak{g})$ denote the
left- and right-invariant Maurer-Cartan 1-forms respectively, and
let $\eta=\tfrac{1}{12}[\theta^{L},\theta^{L}]\cdot\theta^{L}=\tfrac{1}{12}[\theta^{R},\theta^{R}]\cdot\theta^{R}$
be the Cartan 3-form. We have the following definition: 
\begin{defn}
\label{Def_tq-Ham_Mfld}Let $G$ be a Lie group, and let $\kappa\in\mathrm{Aut}(G)$
be an automorphism. A \textbf{twisted quasi-Hamiltonian $\boldsymbol{G}$-space}
\cite{[BY15],[Mein17],[Kno16]} (tq-Hamiltonian in brief) is a triple
$(M,\omega,\Phi)$ where $M$ is a $G$-manifold, $\omega\in\Omega^{2}(M)^{G}$
an invariant 2-form, and $\Phi\colon M\to G\kappa$ is the \textbf{group-valued
moment map} satisfying the following conditions: 
\begin{itemize}
\item[(i)] \textit{Equivariance}: $\Phi(g\cdot x)=\mathrm{Ad}_{g}^{\kappa}\left(\Phi(x)\right)$,
for all $g\in G$ and $x\in M$;
\item[(ii)] \textit{Differential}: $d\omega=\Phi^{\ast}\eta$;
\item[(iii)] \textit{Moment map condition}: $\iota_{\xi_{M}}\omega=\frac{1}{2}\Phi^{\ast}\left(\theta^{L}\cdot\kappa(\xi)+\theta^{R}\cdot\xi\right)$,
for all $\xi\in\mathfrak{g}$;
\item[(iv)] \textit{Minimal degeneracy}: $\ker\omega_{x}\cap\ker(\Phi_{\ast}|_{x})=\{0\}$
for all $x\in M$.
\end{itemize}
\end{defn}

Before addressing the basic examples, it will be useful to highlight
a few observations. 
\begin{rem}
\label{Rks_Def_tq-Ham_mflds}\noindent\begin{enumerate}

\item Taking $\kappa=1$ above, we recover the original definition
\cite{[AMM98]} of a quasi-Hamiltonian manifold (up to a sign convention).
Any tq-Hamiltonian $G$-space with moment map $\Phi\colon M\to G\kappa$
can be viewed as an untwisted q-Hamiltonian space with $G\rtimes\langle\kappa\rangle$-valued
moment map, and we only require that the subgroup $G\subseteq G\rtimes\langle\kappa\rangle$
acts on $M$. Due to this observation, many results of q-Hamiltonian
geometry carry over directly to the twisted case.

\item To be specific about the target of the moment map of $(M,\omega,\Phi)$,
we will often refer to $(M,\omega,\Phi)$ as a $G\kappa$\textit{-valued
tq-Hamiltonian space}.

\item Definition \ref{Def_tq-Ham_Mfld} can be reformulated succintly
in the language of Dirac geometry. The triple $(M,\omega,\Phi)$ is
a tq-Hamiltonian $G$-space if $(\Phi,\omega):(M,TM,0)\dashrightarrow(G\kappa,E_{G}^{\kappa},\eta)$
is a strong $G$-equivariant Dirac morphism \cite[Def.2.4]{[AMB09]},
where $E_{G}^{\kappa}\subseteq\mathbb{T}G$ denotes the twisted Cartan-Dirac
structure \cite[Rk.3.5]{[Mein17]}. We review this viewpoint in section
\ref{Subsec_Dirac_Geo}.

\item There exists an equivalence of categories between $G\kappa$-valued
tq-Hamiltonian manifolds and weakly symplectic Banach manifolds equipped
with a Hamiltonian action of the $\kappa$-twisted loop group $L^{(\kappa)}G=\{\gamma:\mathbb{R}\to G\ |\ \gamma(t+1)=\kappa\left(\gamma(t)\right)\}$.
This is an extension of the equivalence theorem \cite[Thm.8.3]{[AMM98]}
to the twisted setup, and is explained in \cite[Thm.2.4]{[Kno16]}.
This equivalence of categories can be interepreted as a certain Morita
equivalence of pre-symplectic groupoids, as explained in \cite[Rk.3.3, \S7]{[LMS17]}.

\end{enumerate}
\end{rem}

Our first example is the quasi-Hamiltonian analogue of coadjoint orbits
in symplectic geometry. 
\begin{example}
\textbf{\label{Ex_TConj_Cl}(Twisted conjugacy classes) }Let $a\in G$
be a given element, and denote its $\kappa$-twisted conjugacy class
by $\mathcal{C}=\mathrm{Ad}_{G}^{\kappa}(a)$. This space has a natural
structure of a $G\kappa$-valued tq-Hamiltonian space. The group $G$
acts on $\mathcal{C}$ by $\kappa$-twisted conjugation, the moment
map $\Phi_{\mathcal{C}}:\mathcal{C}\to G\kappa$ is given by the inclusion
$\mathcal{C}\hookrightarrow G\kappa$, and the invariant 2-form is
uniquely determined by the moment map condition: 
\begin{equation}
\omega_{\mathcal{C}}(\xi_{\mathcal{C}},\zeta_{\mathcal{C}})_{x}=-\frac{1}{2}\left((\mathrm{Ad}_{x}\circ\kappa)-(\mathrm{Ad}_{x}\circ\kappa)^{-1}\right)\xi\cdot\zeta,\ \ \forall\xi,\zeta\in\mathfrak{g}.\label{Eq_Form_TConj_Class}
\end{equation}
The identity $d\omega_{\mathcal{C}}=\Phi_{\mathcal{C}}^{\ast}\eta$
and the minimal degeneracy condition are verified by elementary computations. 
\end{example}

For future reference, we give a special case of the example above
that allows us to view the group $G\kappa$ itself as a tq-Hamiltonian
$G\times G$-space. This example generalizes the discussion of \cite[\S 2.6]{[AMW02]}. 
\begin{example}
\textbf{\label{Ex_Gkappa}($\bm{G\kappa}$ as a twisted conjugacy
class) }For a given Lie group $G$ and a fixed automorphism $\kappa\in\mathrm{Aut}(G)$,
let $K=G\rtimes\langle\kappa\rangle$, and let $\nu\in\mathrm{Aut}(K\times K)$
be the involution: 
\[
\nu:K\times K\longrightarrow K\times K\mbox{, }(a,b)\longmapsto(b,a).
\]
Letting $G\times G$ act on $K\times K$ via $\nu$-twisted conjugation,
consider the orbit through $(\kappa,\kappa^{-1})$:
\[
\mathcal{C}=\mathrm{Ad}_{G\times G}^{\nu}(\kappa,\kappa^{-1}).
\]
We have a $G\times G$-equivariant diffeomorphism: 
\[
G\kappa\longrightarrow\mathcal{C},\ g\longmapsto\left(g\kappa,(g\kappa)^{-1}\right),
\]
where the $G\times G$ action on $G\kappa$ is given by: 
\begin{equation}
(g_{1},g_{2})\cdot g=g_{1}g\kappa(g_{2}^{-1}),\ \ \forall g,g_{1},g_{2}\in G.\label{Eq_Gkappa_Action}
\end{equation}
We can thus identify $G\kappa$ with the $\nu$-twisted conjugacy
class $\mathcal{C}$, with moment map: 
\begin{equation}
\Phi:G\kappa\longrightarrow(K\times K)\nu,\ \Phi(g)=\left(g\kappa,(g\kappa)^{-1}\right),\label{Eq_Gkappa_Moment}
\end{equation}
and q-Hamiltonian form $\omega_{G\kappa}\equiv0$ by equation (\ref{Eq_Form_TConj_Class}),
since for all $g\in G$: 
\[
\left(\mathrm{Ad}_{(g\kappa,(g\kappa)^{-1})}\nu\right)^{-1}=\mathrm{Ad}_{(g\kappa,(g\kappa)^{-1})}\nu.
\]
In conclusion, $(G\kappa,0,\Phi)$ is a $(K\times K)\nu$-valued tq-Hamiltonian
$G\times G$-space.
\end{example}

As previously mentioned, the twisted moduli spaces of section \ref{SubSec_Moduli}
provide a large class of examples of tq-Hamiltonian manifolds. The
construction of these spaces and their properties rely on fusion and
reduction, which we now review.

\subsubsection{Fusion}

We start with \textit{internal fusion}. Let $G$ and $H$ denote two
Lie groups, and consider an untwisted q-Hamiltonian $G\times G\times H$-space
$(M,\omega,\Phi)$ with moment map $\Phi:M\to G\times G\times H$.
Internal fusion produces an alternative q-Hamiltonian structure $(M,\omega_{\mathrm{fus}},\Phi_{\mathrm{fus}})$
on the same manifold $M$, with $\Phi_{\mathrm{fus}}$ now taking
values in $G\times H$. In the twisted setup, the precise definition
is as follows: 
\begin{prop}
\label{Prop_Def_Internal_Fusion}\textbf{\textup{(Internal fusion)}}
Let $G$ and $H$ be Lie groups, and $\kappa_{1},\kappa_{2}\in\mathrm{Aut}(G)$,
$\tau\in\mathrm{Aut}(H)$. Let $(M,\omega,\Phi)$ be a tq-Hamiltonian
$G\times G\times H$-manifold with moment map: 
\[
\Phi:M\to G\kappa_{1}\times G\kappa_{2}\times H\tau,\ \ \Phi(x)=\left(\Phi_{1}(x),\Phi_{2}(x),\Phi_{3}(x)\right).
\]
The triple $(M,\omega_{\mathrm{fus}},\Phi_{\mathrm{fus}})$, where
$M$ is equipped with the $G\times H$ action such that $G$ acts
diagonally, and where: 
\begin{eqnarray*}
\omega_{\mathrm{fus}} & = & \omega-\tfrac{1}{2}\Phi_{1}^{\ast}\theta^{L}\cdot\Phi_{2}^{\ast}\kappa_{1}^{\ast}\theta^{R},\\
\Phi_{\mathrm{fus}}(x) & = & \left(\Phi_{1}(x)\cdot\kappa_{1}\left(\Phi_{2}(x)\right),\Phi_{3}(x)\right),\ \forall x\in M,
\end{eqnarray*}
is a $G\kappa_{1}\kappa_{2}\times H\tau$-valued tq-Hamiltonian $G\times H$-space,
called the \textbf{internal fusion} of $(M,\omega,\Phi)$ with respect
to the first two components of $\Phi$. 
\end{prop}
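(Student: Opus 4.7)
The plan is to verify in order the four axioms of Definition \ref{Def_tq-Ham_Mfld} for $(M,\omega_{\mathrm{fus}},\Phi_{\mathrm{fus}})$. The $H$-component of the moment map and the $\tau$-twist are untouched by the construction, so the substance of the proof lives entirely in the two $G$-factors, and I reduce to $H=\{1\}$ in the notation. Two ingredients drive everything: (a) the Polyakov--Wiegmann identity for the pullback of $\eta$ under a pointwise product of $G$-valued maps,
\[
(f_{1}f_{2})^{\ast}\eta=f_{1}^{\ast}\eta+f_{2}^{\ast}\eta-\tfrac{1}{2}d\bigl(f_{1}^{\ast}\theta^{L}\cdot f_{2}^{\ast}\theta^{R}\bigr),
\]
and (b) the $\mathrm{Aut}(G)$-invariance of $B$, which yields $\kappa_{1}^{\ast}\eta=\eta$ and permits every $\kappa_{1}$ appearing on one side of $B$ to be absorbed by its inverse on the other.

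Equivariance is immediate: for $g\in G$ acting diagonally, a direct computation using the identity $\mathrm{Ad}_{g}^{\kappa_{1}}(h_{1})\cdot\kappa_{1}\bigl(\mathrm{Ad}_{g}^{\kappa_{2}}(h_{2})\bigr)=\mathrm{Ad}_{g}^{\kappa_{1}\kappa_{2}}\bigl(h_{1}\kappa_{1}(h_{2})\bigr)$ shows that $\Phi_{\mathrm{fus}}$ lands in $G\kappa_{1}\kappa_{2}$ and is $G$-equivariant. For the differential axiom, I apply (a)--(b) to $\Phi_{\mathrm{fus}}=\Phi_{1}\cdot(\kappa_{1}\circ\Phi_{2})$, obtaining $\Phi_{\mathrm{fus}}^{\ast}\eta=\Phi_{1}^{\ast}\eta+\Phi_{2}^{\ast}\eta-\tfrac{1}{2}d(\Phi_{1}^{\ast}\theta^{L}\cdot\Phi_{2}^{\ast}\kappa_{1}^{\ast}\theta^{R})$, and compare this directly with $d\omega_{\mathrm{fus}}=d\omega-\tfrac{1}{2}d(\Phi_{1}^{\ast}\theta^{L}\cdot\Phi_{2}^{\ast}\kappa_{1}^{\ast}\theta^{R})$, using the original identity $d\omega=\Phi_{1}^{\ast}\eta+\Phi_{2}^{\ast}\eta$.

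For the moment map condition I exploit that under the diagonal $G$-action $\xi_{M}=(\xi,\xi)_{M}$, so the original axiom supplies $\iota_{\xi_{M}}\omega$ as a sum of contributions from $\Phi_{1}$ and $\Phi_{2}$. It remains to evaluate $\iota_{\xi_{M}}$ of the correction 2-form, for which I use the standard equivariance identities
\[
\iota_{\xi_{M}}\Phi_{i}^{\ast}\theta^{L}=\Phi_{i}^{\ast}\bigl(\mathrm{Ad}_{\Phi_{i}^{-1}}\xi-\kappa_{i}(\xi)\bigr),\qquad \iota_{\xi_{M}}\Phi_{i}^{\ast}\theta^{R}=\Phi_{i}^{\ast}\bigl(\xi-\mathrm{Ad}_{\Phi_{i}}\kappa_{i}(\xi)\bigr),
\]
together with the analogue for $\kappa_{1}\circ\Phi_{2}$. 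Expanding $\Phi_{\mathrm{fus}}^{\ast}\theta^{L}=\mathrm{Ad}_{\kappa_{1}(\Phi_{2})^{-1}}\Phi_{1}^{\ast}\theta^{L}+\Phi_{2}^{\ast}\kappa_{1}^{\ast}\theta^{L}$ (with its counterpart for $\theta^{R}$) and invoking $\mathrm{Ad}$- and $\kappa_{1}$-invariance of $B$ causes the pieces to recombine into $\tfrac{1}{2}\Phi_{\mathrm{fus}}^{\ast}\bigl(\theta^{L}\cdot\kappa_{1}\kappa_{2}(\xi)+\theta^{R}\cdot\xi\bigr)$. This recombination is the main bookkeeping burden.

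Finally, minimal degeneracy is the main conceptual hurdle, and the cleanest treatment proceeds through the Dirac-geometric formulation announced in Remark \ref{Rks_Def_tq-Ham_mflds}(3) and developed in Section \ref{App_Dirac_Geo}: internal fusion corresponds to composing the strong Dirac morphism $(\Phi,\omega)$ with the Dirac morphism induced by the multiplication $G\kappa_{1}\times G\kappa_{2}\to G\kappa_{1}\kappa_{2}$, whose strength is a standard fact for the twisted Cartan--Dirac structure. Since the composition of strong Dirac morphisms is strong, and strength is precisely the minimal degeneracy condition, axiom (iv) follows automatically. A hands-on alternative, less transparent but available, would mirror \cite[Prop.~6.2]{[AMM98]} by describing $\ker\omega_{\mathrm{fus},x}$ explicitly in terms of the $\kappa_{1}$- and $\kappa_{2}$-twisted adjoint operators at $\Phi_{1}(x)$ and $\Phi_{2}(x)$ and reducing to minimal degeneracy of the original $\omega$.
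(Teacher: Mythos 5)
Your verification is correct, but it takes a genuinely different route from the paper. The paper proves Proposition \ref{Prop_Def_Internal_Fusion} entirely within the Dirac-geometric framework: Remark \ref{Rk_tq-Ham_Fusion_Inversion}(1) deduces the whole statement in one stroke from the fact that $(\Phi_{\mathrm{fus}},\omega_{\mathrm{fus}})=(\mathrm{Mult}^{\kappa_{1}},\varsigma^{\kappa_{1}})\circ(\Phi,\omega)$ is a composition of strong Dirac morphisms, with the strength of $(\mathrm{Mult}^{\kappa_{1}},\varsigma^{\kappa_{1}})$ supplied by Proposition \ref{Prop_Twisted_Mult_Inv}; since the Dirac morphism axioms are exactly the tq-Hamiltonian axioms (ii)--(iv), all four conditions come out simultaneously. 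You instead verify (i)--(iii) by hand via the Polyakov--Wiegmann identity, $\mathrm{Aut}(G)$-invariance of $B$, and the contraction identities for $\Phi_{i}^{\ast}\theta^{L,R}$ (all of which I checked against the twisted generating vector field $\Phi_{\ast}\xi_{M}=\xi^{R}-\kappa_{i}(\xi)^{L}$ and they are correct), and only invoke Dirac geometry for axiom (iv). The trade-off: your hybrid argument is more elementary and self-contained for the first three axioms, making visible exactly where the $\kappa_{1}^{\ast}\theta^{R}$ in the correction term comes from, whereas the paper's route is conceptually uniform and reuses the same mechanism for fusion, inversion, and reduction. One small remark on economy: once you appeal to Proposition \ref{Prop_Twisted_Mult_Inv} for axiom (iv), the same composition of strong Dirac morphisms already delivers (ii) and (iii) automatically, so the explicit contractions in your third step are pedagogically valuable but logically redundant. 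Also, ``strength is precisely the minimal degeneracy condition'' is a slight abbreviation --- strictly, the uniqueness clause of the strong Dirac morphism definition is what equals minimal degeneracy, while the existence clause encodes (ii)--(iii) --- but this does not affect the validity of your argument.
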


Our next operation is a special case of the previous one. Given two
tq-Hamiltonian $G$-spaces $(M_{i},\omega_{i},\Phi_{i})$ with $i=1,2$,
it is easily seen from Definition \ref{Def_tq-Ham_Mfld} that $(M_{1}\times M_{2},\omega_{1}+\omega_{2},\Phi_{1}\times\Phi_{2})$
is a tq-Hamiltonian $G\times G$-space. Applying internal fusion to
the direct product $M_{1}\times M_{2}$, we obtain the so-called \textit{fusion
product} of the $(M_{i},\omega_{i},\Phi_{i})$. 
\begin{prop}
\textbf{\textup{\label{Prop_Def_Fusion}(Fusion product)}} Let $G$
be a Lie group with automorphisms $\kappa_{1},\kappa_{2}\in\mathrm{Aut}(G)$.
For $i=1,2$, suppose that $(M_{i},\omega_{i},\Phi_{i})$ are $G\kappa_{i}$-valued
tq-Hamiltonian $G$-spaces. Let $G$ act diagonally on $M_{1}\circledast M_{2}:=M_{1}\times M_{2}$,
and define: 
\begin{eqnarray*}
\omega_{\mathrm{fus}} & = & \omega_{1}+\omega_{2}-\tfrac{1}{2}\Phi_{1}^{\ast}\theta^{L}\cdot\Phi_{2}^{\ast}\kappa_{1}^{\ast}\theta^{R},\\
\Phi_{\mathrm{fus}}(x_{1},x_{2}) & = & \Phi_{1}(x_{1})\cdot\kappa_{1}\left(\Phi_{2}(x_{2})\right),\ \ \forall x_{i}\in M_{i}.
\end{eqnarray*}
The triple $(M_{1}\circledast M_{2},\omega_{\mathrm{fus}},\Phi_{\mathrm{fus}})$
is a $G\kappa_{1}\kappa_{2}$-valued tq-Hamiltonian space called the
\textbf{fusion product} of $(M_{1},\omega_{1},\Phi_{1})$ and $(M_{2},\omega_{2},\Phi_{2})$. 
\end{prop}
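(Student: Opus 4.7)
The plan is to obtain Proposition \ref{Prop_Def_Fusion} as an immediate corollary of Proposition \ref{Prop_Def_Internal_Fusion}, by first promoting the set-theoretic product $M_1 \times M_2$ to a tq-Hamiltonian $G \times G$-space and then internally fusing its two $G$-factors.

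\textbf{Step 1: product structure.} First I verify that $(M_1 \times M_2,\, \omega_1 + \omega_2,\, \Phi_1 \times \Phi_2)$, with $G \times G$ acting componentwise and with the obvious pullback convention for $\omega_1 + \omega_2$, is a $(G\kappa_1 \times G\kappa_2)$-valued tq-Hamiltonian $G \times G$-space. Each of the four axioms of Definition \ref{Def_tq-Ham_Mfld} factors across $M_1$ and $M_2$: equivariance is componentwise; the Cartan 3-form on $G \times G$ with the product inner product on $\mathfrak{g} \oplus \mathfrak{g}$ splits as $\mathrm{pr}_1^*\eta + \mathrm{pr}_2^*\eta$, giving $d(\omega_1 + \omega_2) = (\Phi_1 \times \Phi_2)^*\eta_{G \times G}$; a fundamental vector field for $(\xi_1, \xi_2) \in \mathfrak{g} \oplus \mathfrak{g}$ is $(\xi_1)_{M_1} + (\xi_2)_{M_2}$, so the moment map identity separates; and finally
\[
\ker(\omega_1 + \omega_2) \cap \ker(\Phi_1 \times \Phi_2)_{\ast} = \bigl(\ker \omega_1 \cap \ker (\Phi_1)_{\ast}\bigr) \oplus \bigl(\ker \omega_2 \cap \ker (\Phi_2)_{\ast}\bigr) = \{0\}.
\]

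\textbf{Step 2: internal fusion.} Applying Proposition \ref{Prop_Def_Internal_Fusion} to this product space with trivial third factor ($H = \{e\}$ and $\tau$ the identity) produces a $G\kappa_1\kappa_2$-valued tq-Hamiltonian $G$-space, whose $G$-action is the diagonal action on $M_1 \times M_2$, whose moment map is $\Phi_1 \cdot \kappa_1(\Phi_2)$, and whose 2-form is $(\omega_1 + \omega_2) - \tfrac{1}{2}\Phi_1^{\ast}\theta^L \cdot \Phi_2^{\ast}\kappa_1^{\ast}\theta^R$. These coincide with $\Phi_{\mathrm{fus}}$ and $\omega_{\mathrm{fus}}$ exactly, yielding the claim.

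\textbf{Main obstacle.} All the genuinely new content rests inside Proposition \ref{Prop_Def_Internal_Fusion}; here Step 1 is a routine bookkeeping exercise whose only mildly delicate point is the splitting of the Cartan 3-form and of the Maurer-Cartan 1-forms across the two factors, which relies on the product form of the invariant inner product on $\mathfrak{g} \oplus \mathfrak{g}$. Once this is in place, Proposition \ref{Prop_Def_Fusion} is a formal consequence. A slight sanity check at the end is to confirm that the $G$-action obtained from internal fusion of the diagonal copy of $G \subset G \times G$ is indeed the diagonal action on $M_1 \times M_2$, and that the automorphism composing the two moment-map targets is $\kappa_1 \kappa_2$, matching the statement.
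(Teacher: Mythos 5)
Your proof is correct and follows essentially the same route the paper takes: the text introducing Proposition \ref{Prop_Def_Fusion} explicitly says that $(M_1\times M_2,\omega_1+\omega_2,\Phi_1\times\Phi_2)$ is a tq-Hamiltonian $G\times G$-space and that the fusion product is obtained by applying internal fusion, which is precisely your Step 1 and Step 2, and Remark \ref{Rk_tq-Ham_Fusion_Inversion} confirms the reduction to the special case $H=\{e\}$. Your verification of the axioms for the product space (in particular the clean splitting of $\ker(\omega_1+\omega_2)\cap\ker(\Phi_1\times\Phi_2)_\ast$) is a small elaboration of the paper's ``it is easily seen'' and is accurate.
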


\begin{rem}
We keep the notation of Propositions \ref{Prop_Def_Internal_Fusion}
and \ref{Prop_Def_Fusion}.
\begin{enumerate}
\item Fusion composes the twisting automorphisms assigned to the target
spaces of moment maps. This is the first notable difference with the
fusion of untwisted q-Hamiltonian manifolds.
\item From the standpoint of Dirac geometry, fusion is simply a composition
of Dirac morphisms. As such, Propositions \ref{Prop_Def_Internal_Fusion}
and \ref{Prop_Def_Fusion} are the twisted analogues of \cite[Thm.5.6]{[AMB09]},
and the main ingredient in the proof is the fact that the group multiplication
map: 
\[
\mathrm{Mult}\circ(1\times\kappa_{1}):G\kappa_{1}\times G\kappa_{2}\longrightarrow G\kappa_{1}\kappa_{2},\ (a,b)\longmapsto a\kappa_{1}(b),
\]
extends to a $G$-equivariant Dirac morphism \cite[Thm.3.9]{[AMB09]}.
See Proposition \ref{Prop_Twisted_Mult_Inv} and Remark \ref{Rk_tq-Ham_Fusion_Inversion}
for more details.
\item Continuing on the previous remark, composing $\mathrm{Mult}:G\times G\to G$
by elements of $\mathrm{Aut}(G)$ allows to define several fusion
products on the category of tq-Hamiltonian manifolds. This is a second
major difference in comparison with the untwisted theory.
\end{enumerate}
\end{rem}

\subsubsection{Reduction}

We will need the following generalization of \cite[Prop.4.4]{[AMM98]}
(see Remark \ref{Rk_tq-Ham_Fusion_Inversion}): 
\begin{prop}
\textbf{\textup{\label{Prop_Inversion}(Inversion)}} Let $G$ be a
Lie group and $\kappa\in\mathrm{Aut}(G)$ an automorphism. If $(M,\omega,\Phi)$
is a $G\kappa$-valued tq-Hamiltonian $G$-space, its \textbf{inverse}
$(M^{-},-\omega,\Phi^{-})$ is a $G\kappa^{-1}$-valued tq-Hamiltonian
manifold, where $M^{-}=M$ as $G$-manifolds, and where the moment
map is given by: 
\[
\Phi^{-}:M\longrightarrow G\kappa^{-1},\ \ \Phi^{-}(x)=\kappa^{-1}\left(\Phi(x)^{-1}\right).
\]
\end{prop}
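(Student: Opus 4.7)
The plan is to verify the four conditions of Definition \ref{Def_tq-Ham_Mfld} directly for the triple $(M^{-},-\omega,\Phi^{-})$, with the $G$-action unchanged but viewed as the twisted conjugation action with respect to $\kappa^{-1}$. The key computational inputs are the standard identities $\mathrm{Inv}^{\ast}\theta^{L}=-\theta^{R}$ and $\mathrm{Inv}^{\ast}\theta^{R}=-\theta^{L}$ for the inversion map $\mathrm{Inv}\colon G\to G$, together with $\kappa^{\ast}\theta^{L}=\kappa\circ\theta^{L}$ and $\kappa^{\ast}\theta^{R}=\kappa\circ\theta^{R}$, which imply $\kappa^{\ast}\eta=\eta$ and $\mathrm{Inv}^{\ast}\eta=-\eta$ under the $\mathrm{Aut}(G)$-invariance of $B$.

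First I would check equivariance: since $\Phi$ is $\mathrm{Ad}^{\kappa}$-equivariant, a direct expansion gives
\[
\Phi^{-}(g\cdot x)=\kappa^{-1}\!\left((g\Phi(x)\kappa(g^{-1}))^{-1}\right)=g\,\kappa^{-1}(\Phi(x)^{-1})\,\kappa^{-1}(g^{-1})=\mathrm{Ad}_{g}^{\kappa^{-1}}(\Phi^{-}(x)).
\]
For the differential condition, writing $\Phi^{-}=\kappa^{-1}\circ\mathrm{Inv}\circ\Phi$ and applying the identities above yields $(\Phi^{-})^{\ast}\eta=\Phi^{\ast}\mathrm{Inv}^{\ast}\eta=-\Phi^{\ast}\eta=-d\omega=d(-\omega)$, as required. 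Minimal degeneracy is immediate, since $\ker(-\omega)_{x}=\ker\omega_{x}$ and $\Phi^{-}_{\ast}|_{x}$ and $\Phi_{\ast}|_{x}$ have the same kernel because $\kappa^{-1}\circ\mathrm{Inv}$ is a diffeomorphism.

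The main step to verify carefully is the moment map condition. Using $(\Phi^{-})^{\ast}\theta^{L}=-\kappa^{-1}\bigl(\Phi^{\ast}\theta^{R}\bigr)$ and $(\Phi^{-})^{\ast}\theta^{R}=-\kappa^{-1}\bigl(\Phi^{\ast}\theta^{L}\bigr)$, and applying the $\mathrm{Aut}(G)$-invariance of $B$ in the form $\kappa^{-1}(a)\cdot\xi=a\cdot\kappa(\xi)$, one computes
\[
\tfrac{1}{2}(\Phi^{-})^{\ast}\bigl(\theta^{L}\cdot\kappa^{-1}(\xi)+\theta^{R}\cdot\xi\bigr)=-\tfrac{1}{2}\Phi^{\ast}\bigl(\theta^{R}\cdot\xi+\theta^{L}\cdot\kappa(\xi)\bigr)=-\iota_{\xi_{M}}\omega=\iota_{\xi_{M}}(-\omega),
\]
where the middle equality is the moment map condition for $(M,\omega,\Phi)$. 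This is the only place where both the sign flip from $\mathrm{Inv}^{\ast}$ and the $\kappa\leftrightarrow\kappa^{-1}$ swap must be bookkept simultaneously, so it is the only genuine obstacle; the rest is formal.

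An alternative, essentially equivalent route would be the Dirac-geometric one alluded to in Remark \ref{Rks_Def_tq-Ham_mflds}(3) and Proposition \ref{Prop_Twisted_Mult_Inv}: the map $\kappa^{-1}\circ\mathrm{Inv}\colon G\kappa\to G\kappa^{-1}$ is $G$-equivariant and extends to a strong Dirac morphism $(G\kappa,E_{G}^{\kappa},\eta)\dashrightarrow(G\kappa^{-1},E_{G}^{\kappa^{-1}},\eta)$ with two-form $0$ pulled back with a sign flip. Composing with the strong Dirac morphism $(\Phi,\omega)$ representing $(M,\omega,\Phi)$ then yields $(\Phi^{-},-\omega)$ as a strong Dirac morphism to $(G\kappa^{-1},E_{G}^{\kappa^{-1}},\eta)$, which is precisely the tq-Hamiltonian structure claimed. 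I would mention this perspective briefly, but present the direct verification above as the proof, since the Dirac-geometric framework is only reviewed later in the paper.
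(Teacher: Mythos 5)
Your direct verification is correct, and all four axioms are checked carefully; the pullback identities $(\Phi^{-})^{\ast}\theta^{L}=-\kappa^{-1}(\Phi^{\ast}\theta^{R})$, $(\Phi^{-})^{\ast}\theta^{R}=-\kappa^{-1}(\Phi^{\ast}\theta^{L})$, and the use of $\mathrm{Aut}(G)$-invariance of $B$ to shuttle the $\kappa^{-1}$ over to $\kappa(\xi)$ are exactly the right bookkeeping for the moment map condition. The route is, however, genuinely different from the one the paper takes. The paper does not verify the axioms directly; instead, it defers the proof to the Dirac-geometric machinery of Section \ref{App_Dirac_Geo}, where Proposition \ref{Prop_Twisted_Mult_Inv} establishes that $(\mathrm{Inv}^{\kappa},0)\colon(G\kappa,E_{G}^{\kappa},\eta)\dashrightarrow(G\kappa^{-1},(E_{G}^{\kappa^{-1}})^{\intercal},-\eta)$ is a Dirac morphism, and Remark \ref{Rk_tq-Ham_Fusion_Inversion}(2) then obtains the inverse tq-Hamiltonian structure by composing $(\mathrm{Inv}^{\kappa},0)$ with $(\Phi,\omega)$ and invoking the transpose/sign-flip equivalence between morphisms to $((E_{G}^{\kappa^{-1}})^{\intercal},-\eta)$ and to $(E_{G}^{\kappa^{-1}},\eta)$. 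Your proof buys elementarity and self-containment (it only uses Definition \ref{Def_tq-Ham_Mfld} and Maurer--Cartan identities, and can be read before Section \ref{App_Dirac_Geo}), at the cost of repeating a sign-chasing computation; the paper's proof buys uniformity (the same Dirac-morphism composition principle handles fusion and inversion at once) at the cost of requiring the apparatus of Section \ref{App_Dirac_Geo}. One small caveat about your closing remark: in the Dirac route the two-form of $(\mathrm{Inv}^{\kappa},0)$ is $0$, so the composition formula literally yields $(\Phi^{-},\omega)$, and the $-\omega$ arises only after using the transpose Dirac structure on the target; your phrasing "two-form $0$ pulled back with a sign flip" glosses this over, though the idea is essentially right.
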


Reduction of tq-Hamiltonian $G$-spaces is defined in terms of the
shifting trick \cite[Rk.6.2]{[AMM98]}, which brings the operation
back to standard q-Hamiltonian reduction: 
\begin{defn}
\label{Def_Reduction}Given a $G\kappa$-valued tq-Hamiltonian space
$(M,\omega,\Phi)$, let $\mathcal{C}=\mathrm{Ad}_{G}^{\kappa}(a)$
be the twisted conjugacy class of $a\in G$. The \textbf{q-Hamiltonian}
\textbf{reduction} of $M$ at $a\in G$ is defined as the quotient:
\[
M_{a}=(M\circledast\mathcal{C}^{-})/\!/G.
\]
\end{defn}

By Proposition \ref{Prop_Def_Fusion}, $M\circledast\mathcal{C}^{-}$
is an untwisted q-Hamiltonian $G$-space, and by the discussion in
\cite[\S\S 5-6]{[AMM98]} we can state: 
\begin{prop}
\textbf{\textup{\label{Prop_Reduction}(Reduction)}} With the notation
above, one has that: 
\begin{enumerate}
\item A point $a\in G$ is a regular value of $\Phi:M\rightarrow G$ if
and only if $e\in G$ is a regular value of the moment map $\Phi_{a}:M\circledast\mathcal{C}^{-}\rightarrow G$.
\item If $Z_{a}^{\kappa}\subseteq G$ denotes the stabilizer of $a\in G$
under $\mathrm{Ad}_{G}^{\kappa}$, one has that: 
\[
M_{a}=\Phi_{a}^{-1}(e)/G\simeq\Phi^{-1}(a)/Z_{a}^{\kappa}.
\]
\item The 2-form on $\Phi_{a}^{-1}(e)\subset M\circledast\mathcal{C}^{-}$descends
to a symplectic form $\omega_{a}\in\Omega^{2}(M_{a})$. The space
$M_{a}$ is then a (singular) symplectic space in the sense of Sjamaar-Lerman
\cite{[SL91]}.
\end{enumerate}
\end{prop}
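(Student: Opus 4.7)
The plan is to bootstrap on the untwisted q-Hamiltonian reduction theorem of Alekseev--Malkin--Meinrenken \cite{[AMM98]}, exploiting the shifting trick already built into Definition~\ref{Def_Reduction}. Combining Propositions~\ref{Prop_Inversion} and~\ref{Prop_Def_Fusion} with $\kappa_{1}=\kappa$ and $\kappa_{2}=\kappa^{-1}$, the fused space $(M\circledast\mathcal{C}^{-},\omega_{\mathrm{fus}},\Phi_{a})$ is an \emph{untwisted} q-Hamiltonian $G$-space, since $G\kappa\cdot\kappa^{-1}=G$; its moment map reads
\[
\Phi_{a}(x,h)=\Phi(x)\cdot\kappa\bigl(\kappa^{-1}(h^{-1})\bigr)=\Phi(x)\,h^{-1}.
\]
The three parts of the proposition will then follow by translating the standard (untwisted) statements back into properties of the original data $(M,\omega,\Phi)$ and $a\in G$.

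For part~(1), I would first observe that $\Phi_{a}^{-1}(e)$ consists exactly of pairs $(x,h)$ with $h=\Phi(x)\in\mathcal{C}$, so the first-factor projection is a $G$-equivariant diffeomorphism $\Phi_{a}^{-1}(e)\cong\Phi^{-1}(\mathcal{C})$. Using $G$-equivariance to move any such pair to the slice $\{h=a\}$, regularity of $e$ for $\Phi_{a}$ translates into transversality of $\Phi$ to $\mathcal{C}$ along $\Phi^{-1}(a)$. This transversality is equivalent to surjectivity of $d\Phi_{x}$, because at $a$ one has $T_{a}\mathcal{C}=(1-\mathrm{Ad}_{a}\circ\kappa)(\mathfrak{g})\cdot a$, which coincides with the image of the fundamental vector field map $\xi\mapsto\xi_{G\kappa}(a)$ and, by equivariance, already sits inside the image of $d\Phi_{x}$. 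This is the step where I expect the main obstacle to lie: a careful computation of $d(\Phi_{a})_{(x,a)}$ is required to confirm that the two surjectivity conditions are genuinely equivalent, and in particular that the Maurer--Cartan conventions in the fusion formula of Proposition~\ref{Prop_Def_Fusion} introduce no spurious twist.

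For part~(2), the identification $M_{a}\cong\Phi^{-1}(a)/Z_{a}^{\kappa}$ then follows formally. Under the $G$-equivariant diffeomorphism $\Phi_{a}^{-1}(e)\cong\Phi^{-1}(\mathcal{C})$, the diagonal $G$-action is the restriction of the action on $M$; since $G$ acts transitively on $\mathcal{C}=\mathrm{Ad}_{G}^{\kappa}(a)$ with stabilizer $Z_{a}^{\kappa}$, every $G$-orbit on $\Phi^{-1}(\mathcal{C})$ meets the slice $\Phi^{-1}(a)$, and two points of $\Phi^{-1}(a)$ lie in the same $G$-orbit precisely when they differ by $Z_{a}^{\kappa}$. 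The claimed diffeomorphism follows.

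For part~(3), I would invoke the standard reduction theorem of \cite{[AMM98]} directly on the untwisted q-Hamiltonian space $(M\circledast\mathcal{C}^{-},\omega_{\mathrm{fus}},\Phi_{a})$ at the regular value $e$: the pullback of $\omega_{\mathrm{fus}}$ to $\Phi_{a}^{-1}(e)$ is closed because $\Phi_{a}^{*}\eta$ vanishes on that level set, and its kernel equals the tangent distribution of the $G$-orbits by minimal degeneracy combined with the moment map condition of Definition~\ref{Def_tq-Ham_Mfld}. Hence it descends to a symplectic form $\omega_{a}$ on the smooth part of $M_{a}$, and the extension to the stratified quotient in the non-free case is provided by the Sjamaar--Lerman framework \cite{[SL91]}, which imports verbatim since the entire construction has been reduced to the untwisted setting.
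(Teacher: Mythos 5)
Your proof follows the same route as the paper: by Propositions~\ref{Prop_Inversion} and~\ref{Prop_Def_Fusion}, the fused space $M\circledast\mathcal{C}^{-}$ is an \emph{untwisted} q-Hamiltonian $G$-space, and the three claims are then deduced from the standard shifting-trick and reduction theory of \cite[\S\S 5-6]{[AMM98]}. Your explicit verification of the transversality equivalence in part~(1), via the moment map condition forcing $T_{a}\mathcal{C}\subseteq\mathrm{Im}(d\Phi_{x})$, is a correct filling-in of the detail the paper delegates to the citation.
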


\subsection{Moduli spaces\label{SubSec_Moduli}}

This section is divided into two parts. The first one sets up the
notation, and reminds of several general constructions and facts pertaining
to character varieties associated to bordered surfaces. Its purpose
is to formulate a precise definition of our twisted moduli spaces,
and to specify the type of flat connections they parametrize.

The second part concerns the tq-Hamiltonian geometry of our character
varieties. After explaining the construction of the moment map and
the invariant 2-form associated to a twisted moduli space, we illustrate
the discussion of the previous section with concrete examples.

\subsubsection{Character varieties}

Let $X$ be a locally path-connected and locally simply connected
topological space, and let $Y\subseteq X$ be a closed subspace. We
employ the following conventions for the fundamental groupoid $\Pi_{1}(X,Y)\rightrightarrows Y$.
The source and target maps $\mathsf{s},\mathsf{t}:\Pi_{1}(X,Y)\to Y$
are defined as $\mathsf{s}[\gamma]=\gamma(0)$ and $\mathsf{t}[\gamma]=\gamma(1)$
for $[\gamma]\in\Pi_{1}(X,Y)$. The groupoid product is given by $[\beta][\gamma]=[\beta\ast\gamma]$
if $[\beta],[\gamma]\in\Pi_{1}(X,Y)$ satisfy $\mathsf{t}[\gamma]=\mathsf{s}[\beta]$,
where for representatives $\beta,\gamma:[0,1]\to X$:
\[
\left(\beta\ast\gamma\right)(t)=\begin{cases}
\gamma(2t), & t\in[0,\tfrac{1}{2}];\\
\beta(2t-1), & t\in[\tfrac{1}{2},1].
\end{cases}
\]
For a pair $(X,Y)$ as above and a group $K$, the associated $\bm{K}$\textbf{-character
variety} is the space of groupoid homomorphisms: 
\[
\mathrm{Hom}\left(\Pi_{1}(X,Y),K\right),
\]
where $K$ is viewed as a groupoid with one object. We will use the
following facts: 
\begin{fact}
With the notation above:
\begin{itemize}
\item[(a)] The $K$-character variety has a natural action of the \textbf{gauge
group} $\mathrm{Map}(Y,K)$, such that for all $\phi\in\mathrm{Map}(Y,K)$
and $\rho\in\mathrm{Hom}\left(\Pi_{1}(X,Y),K\right)$: 
\[
(\phi\cdot\rho)_{\alpha}=\phi_{\mathsf{t}(\alpha)}\rho_{\alpha}\phi_{\mathsf{s}(\alpha)}^{-1}\in K,\ \ \forall\alpha\in\Pi_{1}(X,Y).
\]
\item[(b)] Any morphism of pairs of topological spaces $f:(X,Y)\to(X',Y')$
induces a map: 
\[
f^{\ast}:\mathrm{Hom}\left(\Pi_{1}(X',Y'),K\right)\longrightarrow\mathrm{Hom}\left(\Pi_{1}(X,Y),K\right),
\]
which intertwines the actions of the gauge groups $\mathrm{Map}(Y',K)$
and $\mathrm{Map}(Y,K)$.
\item[(c)] Any morphism of groups $\varphi:K\to H$ induces a map: 
\[
\varphi_{\ast}:\mathrm{Hom}\left(\Pi_{1}(X,Y),K\right)\longrightarrow\mathrm{Hom}\left(\Pi_{1}(X,Y),K\right),
\]
which intertwines the actions of $\mathrm{Map}(Y,K)$ and $\mathrm{Map}(Y,H)$.
\end{itemize}
\end{fact}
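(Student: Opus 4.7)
All three parts are entirely formal, and my plan is to verify them by direct computation from the definitions of the fundamental groupoid and its groupoid homomorphisms, handling (a), (b), (c) in turn. The bulk of the (minimal) work sits in (a); (b) and (c) then reduce to functoriality of $\Pi_{1}$ on pairs together with a short bookkeeping check of equivariance.

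For (a), I first note that the proposed formula $(\phi\cdot\rho)_{\alpha}=\phi_{\mathsf{t}(\alpha)}\rho_{\alpha}\phi_{\mathsf{s}(\alpha)}^{-1}$ depends on $\alpha\in\Pi_{1}(X,Y)$ only through $\rho_{\alpha}$ and the endpoints $\mathsf{s}(\alpha),\mathsf{t}(\alpha)$, all of which descend from path representatives to homotopy classes; hence $\phi\cdot\rho$ is a well-defined map $\Pi_{1}(X,Y)\to K$. To see it is a groupoid homomorphism, I would compute for composable $\beta,\gamma$ with $\mathsf{s}(\beta)=\mathsf{t}(\gamma)$:
\[
(\phi\cdot\rho)_{\beta}\,(\phi\cdot\rho)_{\gamma}\;=\;\phi_{\mathsf{t}(\beta)}\rho_{\beta}\phi_{\mathsf{s}(\beta)}^{-1}\,\phi_{\mathsf{t}(\gamma)}\rho_{\gamma}\phi_{\mathsf{s}(\gamma)}^{-1},
\]
where the two middle factors cancel, collapsing the product to $\phi_{\mathsf{t}(\beta\gamma)}\rho_{\beta\gamma}\phi_{\mathsf{s}(\beta\gamma)}^{-1}=(\phi\cdot\rho)_{\beta\gamma}$. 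The two action axioms, $((\phi\psi)\cdot\rho)_{\alpha}=(\phi\cdot(\psi\cdot\rho))_{\alpha}$ and $(e\cdot\rho)_{\alpha}=\rho_{\alpha}$, follow immediately from pointwise multiplication in $K$.

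For (b), I use the functoriality of $\Pi_{1}$ on pairs: a continuous map $f\colon(X,Y)\to(X',Y')$ induces a groupoid homomorphism $f_{\ast}\colon\Pi_{1}(X,Y)\to\Pi_{1}(X',Y')$ covering the restriction $f|_{Y}\colon Y\to Y'$, and I set $f^{\ast}\rho:=\rho\circ f_{\ast}$. On the gauge side, pullback is $\phi'\mapsto\phi'\circ f|_{Y}$, and the equivariance claim amounts to substituting the identities $\mathsf{s}\circ f_{\ast}=f|_{Y}\circ\mathsf{s}$ and $\mathsf{t}\circ f_{\ast}=f|_{Y}\circ\mathsf{t}$ into the formula of (a). For (c), post-composition gives $\varphi_{\ast}\rho:=\varphi\circ\rho$ (with codomain $\mathrm{Hom}(\Pi_{1}(X,Y),H)$, correcting the apparent typo in the statement), and equivariance under the induced map $\mathrm{Map}(Y,K)\to\mathrm{Map}(Y,H)$, $\phi\mapsto\varphi\circ\phi$, follows from $\varphi(xyz^{-1})=\varphi(x)\varphi(y)\varphi(z)^{-1}$. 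None of these steps presents any genuine obstacle; the only point requiring a moment's care is the descent of the gauge formula in (a) to homotopy classes, which is handled at the outset.
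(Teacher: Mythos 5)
Your verification is correct, and it is essentially the only reasonable argument: the paper states these as background facts without proof, so there is no authorial proof to compare against. Your checks for (a) — well-definedness on homotopy classes, the telescoping cancellation $\phi_{\mathsf{s}(\beta)}^{-1}\phi_{\mathsf{t}(\gamma)}=e$ under the composability condition $\mathsf{t}(\gamma)=\mathsf{s}(\beta)$, and the two action axioms — are exactly what is needed, and parts (b) and (c) correctly reduce to functoriality of $\Pi_{1}$ on pairs and the compatibilities $\mathsf{s}\circ f_{\ast}=f|_{Y}\circ\mathsf{s}$, $\mathsf{t}\circ f_{\ast}=f|_{Y}\circ\mathsf{t}$, and $\varphi(xyz^{-1})=\varphi(x)\varphi(y)\varphi(z)^{-1}$. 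You are also right that the displayed codomain in (c) should read $\mathrm{Hom}\left(\Pi_{1}(X,Y),H\right)$ rather than $\mathrm{Hom}\left(\Pi_{1}(X,Y),K\right)$; this is indeed a typo in the statement, and your proof correctly treats $\varphi_{\ast}$ as post-composition landing in the $H$-character variety.
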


For the remainder of this section, $\Sigma$ denotes a compact oriented
surface such that each connected component has a non-empty boundary,
each boundary circle in $\partial\Sigma$ has precisely one basepoint
$p_{j}$, and $S=\{p_{j}\}$ denotes the resulting finite collection
of basepoints. When there is no risk of confusion, we will simply
denote the fundamental groupoid of $\Sigma$ based at $S$ by $\Pi:=\Pi_{1}(\Sigma,S)$.
In view of the previous paragraph, we have the following definition: 
\begin{defn}
\label{Def_TMS}Let $G$ be a Lie group, and let $\mathsf{p}:G\rtimes\mathrm{Aut}(G)\to\mathrm{Aut}(G)$
denote the projection $(g,\kappa)\mapsto\kappa$. For an element $\sigma\in\mathrm{Hom}\left(\Pi,\mathrm{Aut}(G)\right)$,
called the \textbf{twist}, the $\bm{\sigma}$\textbf{-twisted moduli
space} associated to $(\Sigma,S,G)$ is the preimage of $\sigma$
under the induced map:
\[
\mathsf{p}_{\ast}:\mathrm{Hom}\left(\Pi,G\rtimes\mathrm{Aut}(G)\right)\to\mathrm{Hom}\left(\Pi,\mathrm{Aut}(G)\right).
\]
We denote this space by: 
\[
M_{\sigma}(\Sigma,G):=\mathrm{Hom}_{\sigma}\left(\Pi,G\right).
\]
\end{defn}

\begin{rem}
Let $\Pi$ and $\sigma\in\mathrm{Hom}\left(\Pi,\mathrm{Aut}(G)\right)$
be as in the definition.
\begin{enumerate}
\item The notation $\mathrm{Hom}_{\sigma}(\Pi,G)$ emphasizes the fact that
we view its elements as $\sigma$-twisted groupoid morphisms $\Pi\to G$.
That is, a map $\rho:\Pi\to G$ lies in $\mathrm{Hom}_{\sigma}(\Pi,G)$
if and only if for all composable $\alpha,\beta\in\Pi$: 
\[
\rho_{\alpha\beta}=\rho_{\alpha}\sigma_{\alpha}(\rho_{\beta}),\ \ \rho_{\alpha^{-1}}=\sigma_{\alpha}^{-1}(\rho_{\alpha}^{-1}).
\]
\item With the notation of the definition, the subspace: 
\[
(\mathsf{p}_{\ast})^{-1}(\sigma)\subseteq\mathrm{Hom}\left(\Pi,G\rtimes\mathrm{Aut}(G)\right)
\]
is not invariant under the action of the full gauge group $\mathrm{Map}\left(S,G\rtimes\mathrm{Aut}(G)\right)$.
It is however invariant under the subgroup $\mathrm{Map}\left(S,G\right)\simeq G^{|S|}$.
The latter acts as follows on $\rho\in\mathrm{Hom}_{\sigma}(\Pi,G)$:
\[
(\phi\cdot\rho)_{\alpha}=\phi_{\mathsf{t}(\alpha)}\rho_{\alpha}\sigma_{\alpha}\left(\phi_{\mathsf{t}(\alpha)}^{-1}\right)\in G,\ \ \forall\alpha\in\Pi,\ \phi\in\mathrm{Map}(S,G).
\]
\end{enumerate}
We now turn to the type of objects parametrized by the spaces $M_{\sigma}(\Sigma,G)$.
We recall the following (see also \cite{[BY15],[MoerStackNotes],[BryBk]}): 
\end{rem}

\begin{defn}
\label{Def_Loc_Systems_Torsors_Framings}Let $M$ be a manifold, let
$K$ be a Lie group and let $\mathcal{G}\to M$ be a Lie group bundle
of typical fibre $K$. 
\begin{itemize}
\item[(a)] The bundle $\mathcal{G}\to M$ is called a \textbf{local system}
(of groups) on $M$ if it is equipped with a flat Ehresmann connection
such that parallel transport between fibres is given by group isomorphisms.
\item[(b)] A bundle $\mathcal{P}\to M$ is called a $\mathcal{G}$\textbf{-torsor}
if it is equipped with a fibre-wise free and transitive action of
the bundle $\mathcal{G}\to M$.
\item[(c)] Let $\mathcal{P}\to M$ be a torsor. A \textbf{framing} of $\mathcal{P}$
at a point $x\in M$ is a choice of $\mathcal{G}_{x}$-equivariant
isomorphism $\psi_{x}:\mathcal{G}_{x}\to\mathcal{P}_{x}$.
\end{itemize}
In the case where $\mathcal{G}\to M$ is a local system of groups
and $\mathcal{P}\to M$ is a local system of simply transitive homogeneous
spaces for $\mathcal{G}$, we will say that $\mathcal{P}$ is a \textbf{flat
$\mathcal{G}$-torsor}. 
\end{defn}

\begin{rem}
For the sake of clarity, let us mention some facts that will be implicitly
used later. More details can be found in \cite[\S 2]{[BY15]}.
\begin{enumerate}
\item For any local system of groups $\mathcal{G}\to M$ with typical fibre
$K$, there exists a trivializing open cover $\{\mathcal{U}_{i}\}_{i\in I}$
of $M$ such that the transition functions $\mathcal{U}_{i}\cap\mathcal{U}_{j}\to\mathrm{Aut}(K)$
are locally constant on $\mathcal{U}_{i}\cap\mathcal{U}_{j}\ne\emptyset$.
\item If $\mathcal{P}\to M$ is a $\mathcal{G}$-torsor, its typical fibre
is the manifold $K$, without its group structure. Furthermore, a
framing $\psi_{x}:\mathcal{G}_{x}\to\mathcal{P}_{x}$ of $\mathcal{P}$
at a point $x\in M$ is equivalent to choosing a distinguished point
$p=\psi_{x}(e)\in\mathcal{P}_{x}$.
\item If we take $\mathcal{G}=M\times K$ to be the trivial $K$-bundle
over $M$, a $\mathcal{G}$-torsor $\mathcal{P}\to M$ is the same
as a principal $K$-bundle over $M$. As such, $\mathcal{G}$-torsors
give a natural generalization of principal bundles.
\end{enumerate}
\end{rem}

Let $\Sigma$ be an oriented, compact and connected bordered surface.
As above, let $S\subseteq\partial\Sigma$ be the boundary basepoints,
and let $\Pi=\Pi_{1}(\Sigma,S)$ be the fundamental groupoid. Consider
the universal cover of $\Sigma$ based at $S$: 
\[
\widetilde{\Sigma}:=\left\{ \gamma:[0,1]\to\Sigma\ \big|\ \gamma(0)\in S\right\} /\left\{ \mbox{homotopy rel. }\{0,1\}\right\} ,
\]
with projection $\pi:\widetilde{\Sigma}\to\Sigma$, $[\gamma]\mapsto\mathsf{t}[\gamma]$.
This projection coincides with the quotient map with respect to the
action of $\Pi$ on $\widetilde{\Sigma}$ by concatenation from the
right: 
\[
\Pi\times\widetilde{\Sigma}\longrightarrow\widetilde{\Sigma},\ (\alpha,x)\longmapsto x\cdot\alpha^{-1},
\]
for $(\alpha,x)\in\Pi\times\widetilde{\Sigma}$ such that $\mathsf{s}(x)=\mathsf{s}(\alpha)$.

For a fixed Lie group $G$ with automorphism group $\mathrm{Aut}(G)$,
any twist $\sigma\in\mathrm{Hom}\left(\Pi,\mathrm{Aut}(G)\right)$
gives rise to a local system of groups: 
\begin{equation}
\mathcal{E}_{\sigma}:=\widetilde{\Sigma}\times_{(\sigma,\Pi)}\mathrm{Aut}(G)=\left(\widetilde{\Sigma}\times\mathrm{Aut}(G)\right)/\sim_{(\sigma,\Pi)},\label{Eq_Loc_Syst_Twist}
\end{equation}
where the equivalence relation $(\sigma,\Pi)$ is given by: 
\[
(x,\kappa)\sim_{(\sigma,\Pi)}(y,\tau)\Leftrightarrow\exists\alpha\in\Pi:\ y=x\cdot\alpha^{-1},\ \tau=\sigma_{\alpha}\kappa,
\]
for $(x,\kappa),(y,\tau)\in\widetilde{\Sigma}\times\mathrm{Aut}(G)$.
We then have the following local system of groups: 
\begin{equation}
\mathcal{G}_{\sigma}:=\mathcal{E}_{\sigma}\times_{\mathrm{Aut}(G)}G,\label{Eq_Acting_Sheaf}
\end{equation}
and for any $\sigma$-twisted homomorphism $\rho\in\mathrm{Hom}_{\sigma}(\Pi,G)$,
we have a flat $\mathcal{G}_{\sigma}$-torsor: 
\begin{equation}
\mathcal{P}_{\rho}:=\widetilde{\Sigma}\times_{(\rho,\Pi)}G,\label{Eq_Flat_Torsor}
\end{equation}
where the equivalence relation $\sim_{(\rho,\Pi)}$ on $\widetilde{\Sigma}\times G$
is given by: 
\[
(x,g)\sim_{(\rho,\Pi)}(y,h)\Longleftrightarrow\exists\alpha\in\Pi:\ y=x\cdot\alpha^{-1},\ h=\rho_{\alpha}\sigma_{\alpha}(g).
\]
Thus, the bundle $\mathcal{P}_{\rho}\to\Sigma$ is naturally equipped
with a flat Ehresmann connection, for which the holonomy along $\alpha\in\Pi$
is given by $\rho_{\alpha^{-1}}\in G$. Furthermore, a framing of
$\mathcal{P}_{\rho}$ at any basepoint $p_{j}\in S$ induces a trivialization
of $\mathcal{P}_{\rho}|_{S}$.

The discussion above associates a flat $\mathcal{G}_{\sigma}$-torsor
framed at $S\subseteq\partial\Sigma$ to any $\rho\in M_{\sigma}(\Sigma,G)$,
and by modifying the proof of the Riemann-Hilbert correspondence \cite[Cor.1.4]{[Del70Bk]}
to account for boundary framings, we have: 
\begin{prop}
\label{Prop_Riemann-Hilbert}Let $\Sigma$ be a bordered surface with
boundary basepoints $S\subseteq\partial\Sigma$, and let $\mathcal{G}_{\sigma}\to\Sigma$
be the local system of groups (\ref{Eq_Acting_Sheaf}) obtained from
a twist $\sigma\in\mathrm{Hom}\left(\Pi,\mathrm{Aut}(G)\right)$.
There is a bijective correspondence between elements of $M_{\sigma}(\Sigma,G)$
and flat $\mathcal{G}_{\sigma}$-torsors on $\Sigma$ framed at the
boundary basepoints $S\subseteq\partial\Sigma$. 
\end{prop}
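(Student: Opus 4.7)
The plan is to exhibit mutually inverse maps between $\mathrm{Hom}_{\sigma}(\Pi,G)$ and the set of isomorphism classes of flat $\mathcal{G}_{\sigma}$-torsors on $\Sigma$ framed at $S$, following the pattern of the classical Riemann-Hilbert correspondence but keeping track of the twist $\sigma$ and of the boundary framings.

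For the forward direction, I would assign to $\rho\in\mathrm{Hom}_{\sigma}(\Pi,G)$ the flat $\mathcal{G}_{\sigma}$-torsor $\mathcal{P}_{\rho}$ of (\ref{Eq_Flat_Torsor}) equipped with the framing $\psi_{\rho,j}$ at each $p_{j}\in S$ determined by the constant path $\widetilde{p}_{j}\in\widetilde{\Sigma}$: in the canonical trivializations of $(\mathcal{G}_{\sigma})_{p_{j}}$ and $(\mathcal{P}_{\rho})_{p_{j}}$ coming from $\widetilde{p}_{j}$, the framing $\psi_{\rho,j}$ is simply the identity map $G\to G$. Checking that $(\mathcal{P}_{\rho},\{\psi_{\rho,j}\})$ depends only on $\rho$ and that $\mathcal{P}_{\rho}$ inherits a flat Ehresmann connection reduces to unwinding the equivalence relations in (\ref{Eq_Loc_Syst_Twist})-(\ref{Eq_Acting_Sheaf}) and (\ref{Eq_Flat_Torsor}) against the flat structure of $\widetilde{\Sigma}\to\Sigma$.

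For the inverse direction, I would construct a holonomy map. Given a framed flat $\mathcal{G}_{\sigma}$-torsor $(\mathcal{P},\{\psi_{j}\})$ and $\alpha\in\Pi$, parallel transport along $\alpha$ yields isomorphisms
\[
\mathsf{PT}_{\alpha}^{\mathcal{G}}:(\mathcal{G}_{\sigma})_{\mathsf{s}(\alpha)}\to(\mathcal{G}_{\sigma})_{\mathsf{t}(\alpha)},\qquad\mathsf{PT}_{\alpha}^{\mathcal{P}}:\mathcal{P}_{\mathsf{s}(\alpha)}\to\mathcal{P}_{\mathsf{t}(\alpha)},
\]
with $\mathsf{PT}_{\alpha}^{\mathcal{P}}$ equivariant over $\mathsf{PT}_{\alpha}^{\mathcal{G}}$. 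By the very definition of $\mathcal{G}_{\sigma}$ in (\ref{Eq_Acting_Sheaf}) via $\sigma$, trivializing the source and target fibres through the canonical framings at the boundary basepoints $\mathsf{s}(\alpha),\mathsf{t}(\alpha)\in S$ identifies $\mathsf{PT}_{\alpha}^{\mathcal{G}}$ with the automorphism $\sigma_{\alpha}\in\mathrm{Aut}(G)$. The trivialized $\mathsf{PT}_{\alpha}^{\mathcal{P}}:G\to G$ is then a bijection intertwining right multiplications twisted by $\sigma_{\alpha}$, and therefore has the form $g\mapsto\rho_{\alpha}\,\sigma_{\alpha}(g)$ for a unique $\rho_{\alpha}\in G$. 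Functoriality of parallel transport under the groupoid product directly translates into $\rho_{\alpha\beta}=\rho_{\alpha}\sigma_{\alpha}(\rho_{\beta})$ and $\rho_{\alpha^{-1}}=\sigma_{\alpha}^{-1}(\rho_{\alpha}^{-1})$, so $\rho\in\mathrm{Hom}_{\sigma}(\Pi,G)$.

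Finally I would verify that the two constructions are mutually inverse. That applying holonomy to $(\mathcal{P}_{\rho},\{\psi_{\rho,j}\})$ recovers $\rho$ is a direct computation in the quotient (\ref{Eq_Flat_Torsor}). Conversely, starting from $(\mathcal{P},\{\psi_{j}\})$, parallel transport of the framings along paths in $\widetilde{\Sigma}$ produces a $\Pi$-equivariant map $\widetilde{\Sigma}\times G\to\mathcal{P}$ which descends to an isomorphism $\mathcal{P}_{\rho}\to\mathcal{P}$ of framed flat $\mathcal{G}_{\sigma}$-torsors, mirroring the standard argument of \cite[Cor.1.4]{[Del70Bk]}. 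The main obstacle is not conceptual but combinatorial: one must consistently track how $\sigma_{\alpha}$ enters the equivalence relation defining $\mathcal{P}_{\rho}$, the parallel transport in $\mathcal{G}_{\sigma}$, and the twisted cocycle condition on $\rho$, and check that all three conventions agree.
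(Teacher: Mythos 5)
Your proposal is correct and takes essentially the same approach as the paper: the paper simply cites the construction of $\mathcal{P}_{\rho}$ in (\ref{Eq_Flat_Torsor}) together with a modification of the classical Riemann--Hilbert argument from \cite[Cor.1.4]{[Del70Bk]} to account for the boundary framings, and your sketch of the holonomy inverse and the mutual-inverse check is exactly the content of that modification. The only thing worth flagging is a minor convention point: the paper states that the holonomy of $\mathcal{P}_{\rho}$ along $\alpha$ is $\rho_{\alpha^{-1}}$, whereas your trivialized parallel transport along $\alpha$ produces $g\mapsto\rho_{\alpha}\sigma_{\alpha}(g)$; this is just a choice of which direction one calls ``holonomy'' and does not affect the bijection, but you should fix one convention consistently when writing out the mutual-inverse verification.
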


Suppose now that $\Sigma=\Sigma_{h}^{b}$ is a connected surface of
genus $h\ge0$ with $b\ge1$ boundary circles. Recall that the fundamental
groupoid $\Pi=\Pi_{1}(\Sigma,S)$ admits a \textit{system of free
generators} $\mathcal{F}$. Such generators are obtained by considering
a finite set of non-intersecting paths $\{P_{i}\}_{i\in I}$ in $\Sigma$
with endpoints in $S$, such that cutting $\Sigma$ along the $P_{i}$
results in a polygon. The system $\mathcal{F}$ is then given by the
set $\{[P_{i}]\}_{i\in I}\subseteq\Pi$ and the homotopy classes of
$(b-1)$ boundary circles. Since any twisted homomorphism $\rho\in M_{\sigma}(\Sigma,G)$
is completely determined by its values on the elements of $\mathcal{F}$,
we can state: 
\begin{prop}
\label{Prop_TMS_as_Mflds}Let $\Sigma=\Sigma_{h}^{b}$ be a connected
bordered surface with boundary basepoints $S$, and consider a twist
$\sigma\in\mathrm{Hom}\left(\Pi,\mathrm{Aut}(G)\right)$. Any choice
of a system of free generators of $\Pi$ gives rise to a diffeomorphism:
\[
M_{\sigma}(\Sigma,G)\cong G^{2(h+b-1)}.
\]
\end{prop}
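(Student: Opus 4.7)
The plan is to translate the proposition into a statement about free groupoids: once the groupoid $\Pi$ is freely generated by the system $\mathcal{F}$, any $\sigma$-twisted homomorphism out of $\Pi$ is determined by, and may be freely specified through, its values on $\mathcal{F}$. Concretely, I would exhibit the evaluation map $\mathrm{ev}_{\mathcal{F}}: M_\sigma(\Sigma,G) \to G^{|\mathcal{F}|}$, $\rho \mapsto (\rho_f)_{f\in\mathcal{F}}$, and then reduce the proposition to counting $|\mathcal{F}|$.

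To see that $\mathrm{ev}_{\mathcal{F}}$ is a bijection, I would use freeness of $\Pi$ on $\mathcal{F}$: morphisms $\Pi \to G\rtimes\mathrm{Aut}(G)$ correspond bijectively to source/target-compatible assignments $\mathcal{F} \to G\rtimes\mathrm{Aut}(G)$. Restricting to the fibre of $\mathsf{p}_\ast$ over $\sigma$ (Definition \ref{Def_TMS}), the $\mathrm{Aut}(G)$-component of each generator is prescribed by $\sigma$, so the remaining freedom is a choice of an element of $G$ per generator. Alternatively, one can work directly with the twisted-morphism identity $\rho_{\alpha\beta} = \rho_\alpha\sigma_\alpha(\rho_\beta)$: values on $\mathcal{F}$ propagate inductively to all of $\Pi$, and freeness guarantees no compatibility condition is imposed on those values. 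Smoothness of $\mathrm{ev}_{\mathcal{F}}$ is immediate, and smoothness of its inverse follows because each $\rho_\alpha$ is a finite word in the $\rho_f$ combined with applications of the (fixed) automorphisms $\sigma_{f'}$, hence depends smoothly on $(\rho_f)_{f\in\mathcal{F}}$.

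The main combinatorial step, and the place where the specific number $2(h+b-1)$ enters, is counting $|\mathcal{F}|$. I would view the polygonal decomposition as a CW structure on $\Sigma_h^b$ with $V = b$ vertices (the basepoints in $S$), $E = |I| + b$ edges (the cutting paths $P_i$ together with the $b$ boundary circles), and a single 2-cell, so that $\chi(\Sigma_h^b) = b - (|I|+b) + 1 = 1 - |I|$. Equating with $\chi(\Sigma_h^b) = 2 - 2h - b$ yields $|I| = 2h + b - 1$. Combined with the $(b-1)$ boundary-circle generators, this gives $|\mathcal{F}| = (2h+b-1) + (b-1) = 2(h+b-1)$, which is the desired exponent.

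The most delicate point I anticipate is conceptual rather than computational: justifying why only $(b-1)$ of the $b$ boundary circles are retained as free generators. A priori each boundary circle is a loop in $\Pi$, but the polygon relation (the boundary of the unique 2-cell) expresses one boundary circle as a product of the other generators after the cutting paths cancel in pairs; removing that one circle produces a genuine free generating set of the stated size. Once this is dispatched, all pieces combine to give the required diffeomorphism $M_\sigma(\Sigma,G) \cong G^{2(h+b-1)}$.
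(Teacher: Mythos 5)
Your proof is correct, and the overall strategy coincides with the paper's: freeness of $\Pi$ on $\mathcal{F}$ makes the evaluation map $\rho\mapsto(\rho_f)_{f\in\mathcal{F}}$ a bijection onto $G^{|\mathcal{F}|}$ (and the identification is tautologically a diffeomorphism, since this is how the manifold structure on $M_\sigma(\Sigma,G)$ is declared), so all the content resides in the count $|\mathcal{F}|=2(h+b-1)$. The one place you diverge is in how that count is obtained: the paper gets it by recording, in the remark following the proposition, the explicit generating set $\mathcal{F}=\{A_i,B_i,V_j,U_j\}$ with $1\le i\le h$, $2\le j\le b$ from \cite[\S 9.2]{[AMM98]} and reading off $|\mathcal{F}|=2h+2(b-1)$, whereas you derive $|I|=2h+b-1$ intrinsically from the Euler characteristic of the CW decomposition with $(V,E,F)=(b,\,|I|+b,\,1)$ underlying an arbitrary polygon presentation. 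Your route makes transparent that the count does not depend on the particular choice of cutting paths $\{P_i\}$, which the paper leaves implicit; the paper's is more concrete and immediately usable (the same $\mathcal{F}$ reappears later for the fusion decomposition of $M_\sigma(\Sigma_h^b,G)$). You also correctly isolate and handle the one subtlety — why only $b-1$ of the $b$ boundary circles belong to $\mathcal{F}$ — by noting that the single 2-cell relation writes the last boundary circle as a word in the remaining generators and can be eliminated by a Tietze move, after which no relation survives; the paper simply builds this into the definition of a system of free generators without comment.
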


\begin{rem}
Let $\Sigma_{h}^{b}$ be as above, and let $S=\{p_{j}\}_{j=1}^{b}$
denote the boundary basepoints. 
\begin{enumerate}
\item Given a system of free generators $\mathcal{F}\subseteq\Pi$, the
twist $\sigma\in\mathrm{Hom}\left(\Pi,\mathrm{Aut}(G)\right)$ is
constructed in practice by the assigning values $\sigma_{\alpha}\in\mathrm{Aut}(G)$
for $\alpha\in\mathcal{F}$, and then extending by the homomorphism
property.
\item A concrete example of a system of free generators of $\Pi$ is described
in detail in \cite[\S 9.2]{[AMM98]}. There, the authors take the
set: 
\[
\mathcal{F}=\left\{ A_{i},\ B_{i},\ V_{j},\ U_{j}\ |\ 1\le i\le h,\ 2\le j\le b\right\} ,
\]
where the $\{A_{i},B_{i}\}_{i=1}^{h}$ correspond to the handles of
$\Sigma_{h}^{b}$, the class $V_{j}$ is that of the boundary circle
based at $p_{j}\in S$, and $U_{j}$ is the class of a path from $p_{j}$
to $p_{1}$. These generators are then subject to the relation: 
\[
V_{1}\prod_{j=2}^{b}U_{j}V_{j}U_{j}^{-1}\prod_{i=1}^{h}[A_{i},B_{i}]=1,
\]
which is the word formed by the boundary segments of the polygon obtained
by cutting $\Sigma_{h}^{b}$ along the $A_{i}$, $B_{i}$ and $U_{j}$.
\end{enumerate}
\end{rem}

To study twisted moduli spaces with $G$ compact, connected and simply
connected, it is sufficient to consider twists taking values in the
diagram automorphism group $\mathrm{Out}(G)$, viewed as a subgroup
of $\mathrm{Aut}(G)$. This is explained by: 
\begin{prop}
\label{Prop_Isom_Twists}Let $G$ be a compact 1-connected Lie group,
and let $\Sigma$ be a bordered surface with boundary basepoints $S\subseteq\partial\Sigma$.
If the twists $\sigma,\tau\in\mathrm{Hom}\left(\Pi,\mathrm{Aut}(G)\right)$
have the same image in $\mathrm{Hom}\left(\Pi,\mathrm{Out}(G)\right)$,
then:
\begin{itemize}
\item[(a)] The local systems $\mathcal{G}_{\sigma}$ and $\mathcal{G}_{\tau}$
over $\Sigma$ are isomorphic.
\item[(b)] The moduli spaces $M_{\sigma}(\Sigma,G)$ and $M_{\tau}(\Sigma,G)$
are $G^{|S|}$-equivariantly isomorphic.
\end{itemize}
\end{prop}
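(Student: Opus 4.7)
The plan is to build a single $G$-valued $1$-cocycle $c: \Pi \to G$ that converts $\sigma$-twisted data into $\tau$-twisted data, and then derive both parts simultaneously. The hypothesis that $\sigma$ and $\tau$ agree in $\mathrm{Hom}(\Pi, \mathrm{Out}(G))$ means $\tau_\alpha \sigma_\alpha^{-1} \in \mathrm{Inn}(G)$ for every $\alpha \in \Pi$, and since $G$ is compact and $1$-connected we have $\mathrm{Inn}(G) = G/Z(G)$ with $G \twoheadrightarrow \mathrm{Inn}(G)$ surjective. I will fix a system of free generators $\mathcal{F} \subset \Pi$ as in the preceding remark — this exists because $\Pi_1(\Sigma, S)$ is a free groupoid for any bordered surface — and for each $\alpha \in \mathcal{F}$ pick a lift $c_\alpha \in G$ with $\mathrm{Ad}_{c_\alpha} = \tau_\alpha \sigma_\alpha^{-1}$. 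I then extend $c$ to all of $\Pi$ by the $\sigma$-twisted cocycle rules $c_{\alpha\beta} := c_\alpha\,\sigma_\alpha(c_\beta)$ and $c_{\alpha^{-1}} := \sigma_\alpha^{-1}(c_\alpha^{-1})$; freeness of $\Pi$ on $\mathcal{F}$ ensures there are no relations to verify, and a short calculation shows that the property $\mathrm{Ad}_{c_\alpha} = \tau_\alpha \sigma_\alpha^{-1}$ is stable under both operations and hence propagates from $\mathcal{F}$ to all of $\Pi$.

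Granting $c$, for part (b) I define the map $\Phi: M_\sigma(\Sigma, G) \to M_\tau(\Sigma, G)$ by $\Phi(\rho)_\alpha := \rho_\alpha c_\alpha^{-1}$. That $\Phi(\rho)$ is a $\tau$-twisted groupoid homomorphism reduces, via $\tau_\alpha = \mathrm{Ad}_{c_\alpha}\sigma_\alpha$ and the cocycle identity for $c$, to the one-line computation
\[
\Phi(\rho)_\alpha\,\tau_\alpha\!\left(\Phi(\rho)_\beta\right) = \rho_\alpha\,\sigma_\alpha(\rho_\beta)\,(c_\alpha\,\sigma_\alpha(c_\beta))^{-1} = \rho_{\alpha\beta}\,c_{\alpha\beta}^{-1} = \Phi(\rho)_{\alpha\beta}.
\]
Equivariance for the $G^{|S|}$-action is automatic because $c_\alpha^{-1}$ is inserted \emph{between} the gauge factors $\phi_{\mathsf{t}(\alpha)}$ and $\sigma_\alpha(\phi_{\mathsf{s}(\alpha)}^{-1})$, and the conversion from $\sigma_\alpha$ to $\tau_\alpha = \mathrm{Ad}_{c_\alpha}\sigma_\alpha$ on the right gauge factor is exactly absorbed by the $c_\alpha^{-1}$. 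Invertibility is immediate: the cochain $\alpha \mapsto c_\alpha^{-1}$ is a $\tau$-twisted cocycle realizing $\sigma_\alpha \tau_\alpha^{-1} = \mathrm{Ad}_{c_\alpha^{-1}}$, and yields the inverse of $\Phi$. For part (a), the same cocycle $c$ provides a fiberwise inner-automorphism isomorphism $\mathcal{G}_\sigma \to \mathcal{G}_\tau$: using the presentation $\mathcal{G}_\sigma = \widetilde{\Sigma} \times_{(\sigma, \Pi)} G$, the twisted cocycle equation on $c$ is exactly what is required for the fiberwise map $\mathrm{Ad}_c$ to descend to a well-defined flat isomorphism of local systems of groups over $\Sigma$.

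The main technical obstacle is ensuring that the $\mathrm{Inn}(G)$-valued datum $\alpha \mapsto \tau_\alpha \sigma_\alpha^{-1}$ admits a lift to a genuinely $G$-valued $\sigma$-twisted cocycle, rather than one defined only modulo $Z(G)$. This is precisely where the bordered hypothesis on $\Sigma$ enters in an essential way: freeness of $\Pi_1(\Sigma, S)$ on $\mathcal{F}$ eliminates any relations whose compatibility would have to be checked after lifting through the surjection $G \twoheadrightarrow \mathrm{Inn}(G)$, so any choice of lifts on the generators extends consistently to a cocycle on all of $\Pi$. On a closed surface the single surface relation would generally produce a $Z(G)$-valued obstruction class, and a separate argument would be needed there.
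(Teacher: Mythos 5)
Your proof is correct, and it takes a genuinely different route from the paper's. The paper establishes (a) first — by constructing an $\mathrm{Aut}(G)$-valued map $\psi$ on the universal cover $\widetilde{\Sigma}$ satisfying $\psi(x\cdot\alpha^{-1})=\tau_\alpha\circ\psi(x)\circ\sigma_\alpha^{-1}$, outsourcing the construction of $\psi$ to an external reference — and then derives (b) from (a) through Proposition \ref{Prop_Riemann-Hilbert} and the framed-torsor description. You instead build a $G$-valued, $\sigma$-twisted cocycle $c$ directly on the fundamental groupoid $\Pi$, and this yields (b) immediately by the explicit map $\rho\mapsto\rho_\alpha c_\alpha^{-1}$; (a) then comes out as a corollary via the induced gauge transformation $\mathrm{Ad}_c$. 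Your verifications of the twisted-homomorphism property, the $G^{|S|}$-equivariance, and the invertibility (via the $\tau$-twisted cocycle $c^{-1}$) all check out. The key observation you isolate — that freeness of the bordered surface groupoid on $\mathcal{F}$ eliminates the $Z(G)$-lifting obstruction for a cocycle with prescribed image in $\mathrm{Inn}(G)$, whereas the single surface relation in the closed case would generically produce an obstruction class — is a genuine insight that the paper leaves implicit behind its reference. The trade-off is that your treatment of (a) is terse: the passage from a cocycle on $\Pi$ to a bundle map descending on $\widetilde{\Sigma}\times_{(\sigma,\Pi)}G$ does implicitly require extending $c$ (or rather $\mathrm{Ad}_c$) over points of $\widetilde{\Sigma}$ whose endpoint is not a basepoint, say by choosing a fundamental domain and reducing to the $\Pi$-translates of its interior; this is routine, but it is exactly the content of the paper's $\psi$, so you should not elide it entirely. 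Overall your argument is cleaner and more self-contained than the paper's, and it produces the isomorphism of moduli spaces in closed form rather than through the Riemann–Hilbert correspondence.
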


\noindent \emph{Outline of proof.} Statement (b) follows from (a),
equation (\ref{Eq_Flat_Torsor}), and Proposition \ref{Prop_Riemann-Hilbert}.
Statement (a) is a consequence of the fact that if $\sigma,\tau\in\mathrm{Hom}\left(\Pi,\mathrm{Aut}(G)\right)$
have the same image in $\mathrm{Hom}\left(\Pi,\mathrm{Out}(G)\right)$,
then the local systems $\mathcal{E}_{\sigma}$ and $\mathcal{E}_{\tau}$
defined by equation (\ref{Eq_Loc_Syst_Twist}) are isomorphic. Such
an isomorphism can be obtained by constructing a map $\psi:\widetilde{\Sigma}\to\mathrm{Aut}(G)$
such that: 
\[
\psi(x\cdot\alpha^{-1})=\tau_{\alpha}\circ\psi(x)\circ\sigma_{\alpha}^{-1},\ \ \forall x\in\widetilde{\Sigma},\ \alpha\in\Pi.
\]
and use it to get an automorphism of the trivial principal $\mathrm{Aut}(G)$-bundle
$\widetilde{\Sigma}\times\mathrm{Aut}(G)$ that maps the equivalence
classes of $\sim_{(\sigma,\Pi)}$ to those of $\sim_{(\tau,\Pi)}$
(see notation after eq. (\ref{Eq_Loc_Syst_Twist})). An explicit construction
of the map $\psi:\widetilde{\Sigma}\to\mathrm{Aut}(G)$ parallels
the one given in \cite[App.A]{[Sen02]}. \qed
\begin{rem}
Up to this point, we have only considered connected surfaces $\Sigma$,
but this assumption is not really restrictive. For a surface $\Sigma=\Sigma_{1}\sqcup\Sigma_{2}$
with two connected components $\Sigma_{i}=\Sigma_{h_{i}}^{b_{i}}$
($i=1,2$), the fundamental groupoid decomposes as a product $\Pi=\Pi_{1}(\Sigma_{1},S_{1})\times\Pi_{1}(\Sigma_{2},S_{2})$,
where $S_{i}\subseteq\partial\Sigma_{i}$ are the basepoints. The
twist $\sigma$ decomposes accordingly, and the corresponding twisted
moduli space is just a product: 
\[
M_{\sigma}\left(\Sigma_{1}\sqcup\Sigma_{2},G\right)=M_{\sigma_{1}}\left(\Sigma_{1},G\right)\times M_{\sigma_{2}}\left(\Sigma_{2},G\right).
\]
The above discussion thus extends directly to compact surfaces with
finitely many connected components. 
\end{rem}

\subsubsection{The tq-Hamiltonian structure}

The preceding discussion shows that the moduli spaces $M_{\sigma}(\Sigma,G)$
are naturally equipped with a group-valued moment map, which is given
by evaluation along the boundary $\partial\Sigma$, and equivariant
with respect to the natural gauge group action. To specify the q-Hamiltonian
form $\omega_{\sigma}\in\Omega^{2}\left(M_{\sigma}(\Sigma,G)\right)^{G^{b}}$,
we follow \v Severa's formulation in \cite[Thm.3.1]{[Sev12]}.

Let $M=M_{\sigma}(\Sigma,G)$ with $\sigma\in\mathrm{Hom}\left(\Pi,\mathrm{Aut}(G)\right)$
and $\Sigma=\Sigma_{h}^{b}$ connected. Let $\Gamma\subseteq\mathrm{Aut}(G)$
be the subgroup generated by the image of $\sigma$, let $K=G\rtimes\Gamma$,
and consider the group $K(M):=\mathcal{C}^{\infty}(M,K)$ with pointwise
multiplication. The central extension $K(M)\times\Omega^{2}(M)$ then
has multiplication and inversion given by:
\begin{eqnarray}
(q_{1},\kappa_{1},\omega_{1})\cdot(q_{2},\kappa_{2},\omega_{2}) & = & \left(q_{1}.\kappa_{1}q_{2},\kappa_{1}\kappa_{2},\omega_{1}+\omega_{2}-\tfrac{1}{2}q_{1}^{\ast}\theta^{L}\cdot\kappa_{1}(q_{2}^{\ast}\theta^{R})\right),\label{Eq_K(M)_Prod}\\
(q_{1},\kappa_{1},\omega_{1})^{-1} & = & (\kappa_{1}^{-1}q_{1}^{-1},\kappa_{1}^{-1},-\omega_{1}),\label{Eq_K(M)_Inv}
\end{eqnarray}
where $q_{i}:M\to G$, $\kappa_{i}\in\mathrm{Aut}(G)$ and $\omega_{i}\in\Omega^{2}(M)$
for $i=1,2$. We are mainly interested in the elements $(\mathrm{ev}_{\gamma},0)\in K(M)\times\Omega^{2}(M)$,
where for $\gamma\in\Pi$:
\begin{equation}
\mathrm{ev}_{\gamma}:M_{\sigma}(\Sigma,G)\to K,\ \ \rho\mapsto(\rho_{\gamma},\sigma_{\gamma}).\label{Eq_Ev_K}
\end{equation}
Next, suppose $\Delta_{\mathcal{F}}(\Sigma)$ is a polygon obtained
from a system of free generators $\mathcal{F}\subset\Pi$ (see parag.
after. Prop.\ref{Prop_Riemann-Hilbert}). If $\{E_{i}\}_{i=1}^{n_{E}}$
denote the edges of $\partial\Delta_{\mathcal{F}}(\Sigma)$ ($n_{E}=4h+3b-2$),
then their homotopy classes in $\Pi$ satisfy the relation:
\[
\prod_{i=1}^{n_{E}}[E_{i}]=[\partial\Delta_{\mathcal{F}}(\Sigma)]=1.
\]
Using these objects, we state:
\begin{thm}
\label{Thm_TMS_tq-Ham_str}Let $\Sigma=\Sigma_{h}^{b}$ be a bordered
surface with boundary basepoints $S$, and fix a twist $\sigma\in\mathrm{Hom}\left(\Pi,\mathrm{Aut}(G)\right)$.
The triple $\left(M_{\sigma}(\Sigma,G),\omega_{\sigma},\Phi_{\sigma}\right)$
is tq-Hamiltonian $G^{b}$-space, where:
\begin{itemize}
\item[(i)] The moment map is given by the holonomies along the boundary circles
$V_{i}$ of $\Sigma$: 
\begin{equation}
\begin{array}{ccccc}
\Phi_{\sigma} & : & M_{\sigma}(\Sigma,G) & \longrightarrow & G\sigma_{V_{1}^{-1}}\times\cdots\times G\sigma_{V_{b}^{-1}},\\
 &  & \rho & \longmapsto & \left(\rho_{V_{1}^{-1}},\cdots,\rho_{V_{b}^{-1}}\right).
\end{array}\label{Eq_TMS_Moment}
\end{equation}
\item[(ii)] For any polygon presentation $\Delta_{\mathcal{F}}(\Sigma)$ of $\Sigma$
with edges $\{E_{i}\}_{i=1}^{n_{E}}$, the invariant 2-form $\omega_{\sigma}\in\Omega^{2}\left(M_{\sigma}(\Sigma,G)\right)^{G^{b}}$
is given by Ševera's formula: 
\begin{equation}
(e,1,\omega_{\sigma})=\prod_{i=1}^{n_{E}}(\mathrm{ev}_{E_{i}},0),\label{Eq_TMS_Form_Severa}
\end{equation}
where the product is in the group $K\left(M_{\sigma}(\Sigma,G)\right)\times\Omega^{2}\left(M_{\sigma}(\Sigma,G)\right)$.
\end{itemize}
\end{thm}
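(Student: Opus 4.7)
\emph{Proof proposal.} The plan is to interpret \v{S}evera's formula (\ref{Eq_TMS_Form_Severa}) as an iterated internal fusion of the elementary building blocks $(G\kappa,0,\Phi)$ supplied by Example \ref{Ex_Gkappa}, with the central extension product (\ref{Eq_K(M)_Prod})--(\ref{Eq_K(M)_Inv}) encoding all of the fusion bookkeeping in a single identity.

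I would begin by fixing a polygon presentation $\Delta_{\mathcal{F}}(\Sigma)$ with oriented boundary edges $\{E_i\}_{i=1}^{n_E}$ and considering the direct product $\prod_{i=1}^{n_E}(G\sigma_{E_i},0,\Phi_i)$, carrying the tq-Hamiltonian structure of Example \ref{Ex_Gkappa} with one $G\times G$ action per edge. The tuple of evaluation maps $(\mathrm{ev}_{E_i})$ embeds $M_\sigma(\Sigma,G)$ into this product as the submanifold cut out by the requirement that the moment components at edge endpoints identified in $\Delta_{\mathcal{F}}(\Sigma)$ agree.

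The key computational observation is then that the multiplication law (\ref{Eq_K(M)_Prod}) in $K(M)\times\Omega^2(M)$ is exactly the formula of internal fusion (Proposition \ref{Prop_Def_Internal_Fusion}) applied to two evaluation maps sharing such a vertex: the fused moment map gives the combined holonomy $\mathrm{ev}_{E_i}\cdot\sigma_{E_i}(\mathrm{ev}_{E_{i+1}})=\mathrm{ev}_{E_i E_{i+1}}$ via the $\sigma$-twisted groupoid rule, while the correction term $-\tfrac{1}{2}\mathrm{ev}_{E_i}^{*}\theta^L\cdot\sigma_{E_i}(\mathrm{ev}_{E_{i+1}}^{*}\theta^R)$ is precisely the fusion $2$-form. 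Iterating this fusion along $\partial\Delta_{\mathcal{F}}(\Sigma)$ and collapsing via the boundary word relation yields (\ref{Eq_TMS_Form_Severa}) as a single identity. From it one reads off simultaneously the moment map formula (i), the diagonal $G^b$-equivariance at the basepoints $S$, the condition $d\omega_\sigma=\Phi_\sigma^{*}\eta$, and the moment map condition (iii), each being preserved at every step by Proposition \ref{Prop_Def_Internal_Fusion}.

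The main obstacle is the minimal degeneracy condition (iv) of Definition \ref{Def_tq-Ham_Mfld}, which is not automatically preserved by fusion since the correction term is degenerate. I would handle it via the Dirac-geometric viewpoint of Remark \ref{Rks_Def_tq-Ham_mflds}(3): each $(\mathrm{ev}_{E_i},0)$ is a strong $G$-equivariant Dirac morphism into $(G\sigma_{E_i},E_G^{\sigma_{E_i}},\eta)$, and the twisted multiplication maps are strong Dirac morphisms by the twisted analogue of \cite[Thm.3.9]{[AMB09]} (Proposition \ref{Prop_Twisted_Mult_Inv}). Since strong Dirac morphisms are closed under composition, $(\Phi_\sigma,\omega_\sigma)$ is itself one, and minimal degeneracy follows. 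As a backup, (iv) could also be checked by a direct tangent-space computation in the polygon coordinates of Proposition \ref{Prop_TMS_as_Mflds}, generalizing \cite[\S 9]{[AMM98]} and using the $\mathrm{Aut}(G)$-invariance of $B$ to absorb the twist factors vertex by vertex.
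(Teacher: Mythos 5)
Your proposal takes a genuinely different route from the one the paper sketches in the remark following the theorem. The paper's argument is gauge-theoretic and infinite-dimensional: realize the moduli $\mathcal{M}$ of flat connections on a $\mathcal{G}_\sigma$-torsor as a Hamiltonian $L^{(\kappa_1)}G\times\cdots\times L^{(\kappa_b)}G$-space by adapting the proof of \cite[Thm.9.3]{[AMM98]} to account for the twist, then apply the equivalence theorem for tq-Hamiltonian manifolds (Remark \ref{Rks_Def_tq-Ham_mflds}-(4), via \cite{[LMS17]}) to transfer the Hamiltonian structure back to $M_\sigma(\Sigma,G)$; \v{S}evera's formula then serves as the device for writing out the resulting $2$-form. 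You instead propose a purely finite-dimensional derivation by iterated fusion from the elementary block $(G\kappa,0,\Phi)$ of Example \ref{Ex_Gkappa}, which is the twisted analogue of the direct construction in \cite[\S 9.2]{[AMM98]}. Both routes are valid; yours stays entirely inside the finite-dimensional q-Hamiltonian category, while the paper's buys the interpretation of $\omega_\sigma$ as descending from the Atiyah--Bott form on the space of flat connections.

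Two points in your write-up should be corrected. First, you claim that minimal degeneracy (iv) is ``not automatically preserved by fusion since the correction term is degenerate.'' This is false: Propositions \ref{Prop_Def_Internal_Fusion} and \ref{Prop_Def_Fusion} assert outright that the (internal) fusion of tq-Hamiltonian spaces is again tq-Hamiltonian, so minimal degeneracy \emph{is} preserved. The degeneracy of the correction $2$-form is irrelevant; what matters is that fusion is a composition of strong Dirac morphisms (Proposition \ref{Prop_Twisted_Mult_Inv} and Remark \ref{Rk_tq-Ham_Fusion_Inversion}), and strong Dirac morphisms are closed under composition. The Dirac-geometric argument you offer as a ``fix'' is therefore not a backup --- it is the content of the fusion propositions themselves, so there is no obstacle here to begin with. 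Second, the picture of embedding $M_\sigma(\Sigma,G)$ into $\prod_i(G\sigma_{E_i},0,\Phi_i)$ ``as the submanifold cut out by agreement of moment components'' is imprecise and suggests a reduction step that does not occur. The cleaner formulation of your idea is the fusion decomposition (\ref{Eq_TMS_GenSurface_Fusion}), $M_\sigma(\Sigma_h^b,G) = \mathbb{D}_{\varphi_1}(G)\circledast\cdots\circledast\mathbb{D}_{\varphi_h}(G)\circledast D_{\sigma_1}(G)\circledast\cdots\circledast D_{\sigma_{b-1}}(G)$, with the base cases verified directly as in Examples \ref{Ex_Moduli_Double} and \ref{Ex_Moduli_Fused_Double}: one never restricts to a submanifold, one only fuses, and \v{S}evera's product (\ref{Eq_TMS_Form_Severa}) is exactly the algebraic shadow of this iterated fusion.
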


\begin{rem}
Equation (\ref{Eq_TMS_Form_Severa}) can be obtained by combining
the gauge theoretic construction in \cite[\S 9]{[AMM98]} with the
equivalence theorem for tq-Hamiltonian manifolds (Remark \ref{Rks_Def_tq-Ham_mflds}-(4)).
For a $\mathcal{G}_{\sigma}$-torsor $\mathcal{P}\to\Sigma$, let
$\mathcal{M}$ denote the moduli space of flat connections on $\mathcal{P}$,
modulo gauge transformations that are trivial along $\partial\Sigma$.
Then with $\kappa_{i}=\sigma_{V_{i}^{-1}}$, $\mathcal{M}$ is the
Hamiltonian $L^{(\kappa_{1})}G\times\cdots L^{(\kappa_{b})}G$-space
equivalent to $M_{\sigma}(\Sigma,G)$. Modifying the proof of \cite[Thm.9.3]{[AMM98]}
to account for the twist $\sigma$, one obtains the invariant symplectic
structure on $\mathcal{M}$, which by \cite[\S 7]{[LMS17]} gives
rise to $\omega_{\sigma}$. The key observation with \v Severa's
formulation is that the multiplication in the group $K\left(M_{\sigma}(\Sigma,G)\right)\times\Omega^{2}\left(M_{\sigma}(\Sigma,G)\right)$
reproduces the equations of the fusion product of tq-Hamiltonian manifolds.
\end{rem}

\begin{example}
\label{Ex_Moduli_Double}\textbf{(The double)} We generalize the example
$D(G)\simeq G\times G$ studied in \cite[\S 3.2]{[AMM98]}, by realizing
it a twisted moduli space associated to an annulus (or a cylinder)
$\Sigma_{0}^{2}$.

As generators of $\Pi=\Pi_{1}(\Sigma_{0}^{2},\{p_{1},p_{2}\})$, we
take the paths $X$ and $Y$ depicted in Figure \ref{Fig_Annulus},
and we define the twist $\sigma\in\mathrm{Hom}\left(\Pi,\mathrm{Aut}(G)\right)$
by setting: 
\[
\sigma_{X}=\tau,\ \sigma_{Y}=\kappa\in\mathrm{Aut}(G).
\]
We use the parametrization $(x,y)=(\rho_{X},\rho_{Y})$ for the elements
$\rho\in D_{\sigma}(G)=M_{\sigma}(\Sigma_{0}^{2},G)$. The gauge action
of an element $\phi=(g_{1},g_{2})$ of $\mathrm{Map}(S,G)=G\times G$
is then expressed as: 
\begin{eqnarray*}
(g_{1},g_{2})\cdot(x,y) & = & \left(g_{1}x\tau(g_{2}^{-1}),g_{2}x\kappa(g_{1}^{-1})\right).
\end{eqnarray*}

\begin{figure}[H]
\begin{centering}
\includegraphics[scale=0.5]{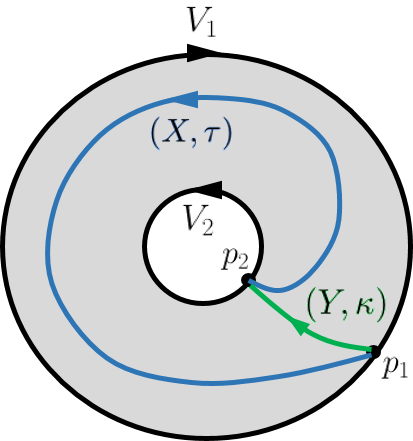}
\par\end{centering}
\caption{Generators of $\Pi_{1}(\Sigma_{0}^{2},\{p_{1},p_{2}\})$ and twist}
\label{Fig_Annulus}
\end{figure}

Since the boundary circles of $\Sigma_{0}^{2}$ are then given by
$V_{1}=Y^{-1}X^{-1}$ and $V_{2}=YX$, the components of the moment
map $\Phi_{D_{\sigma}}:D_{\sigma}(G)\to G\tau\kappa\times G\tau^{-1}\kappa^{-1}$
are given by:
\begin{equation}
\begin{cases}
\Phi_{1}(x,y) & =x\tau(y),\\
\Phi_{2}(x,y) & =\tau^{-1}\left(x^{-1}\kappa^{-1}(y^{-1})\right),
\end{cases}\label{Eq_Components_Moment_Dble}
\end{equation}
and satisfy: 
\[
\Phi_{D_{\sigma}}\left((g_{1},g_{2})\cdot(x,y)\right)=\left(\mathrm{Ad}_{g_{1}}^{\tau\kappa}\Phi_{1}(x,y),\ \mathrm{Ad}_{g_{2}}^{\tau^{-1}\kappa^{-1}}(x,y)\right).
\]

\begin{figure}
\begin{centering}
\includegraphics[scale=0.5]{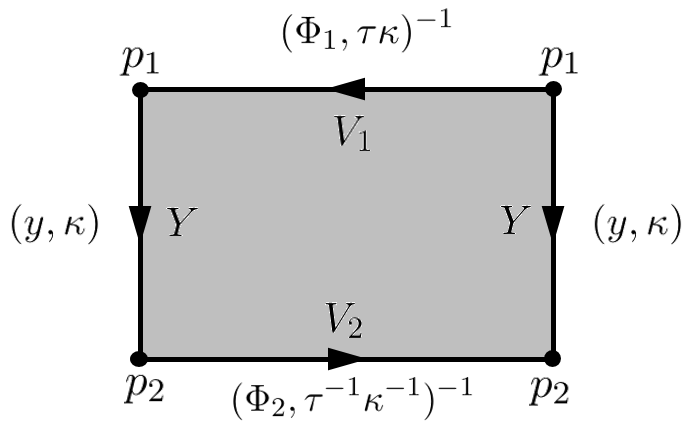}
\par\end{centering}
\caption{Construction of $\omega_{D_{\sigma}}$}

\label{Fig_2-Form_Double}
\end{figure}

To construct the invariant 2-form $\omega_{D_{\sigma}}$, we cut the
annulus of Figure \ref{Fig_Annulus} along the path $Y$, and assigning
the functions 
\[
(\mathrm{ev}_{V_{j}},0),\ (\mathrm{ev}_{Y},0)\in\mathcal{C}^{\infty}\left(D_{\sigma}(G),K\right)\times\Omega^{2}\left(D_{\sigma}(G)\right)
\]
to the edges of the obtained square, we have the situation depicted
in Figure \ref{Fig_2-Form_Double}. By Ševera's formula (\ref{Eq_TMS_Form_Severa}):
\[
(e,1,\omega_{D_{\sigma}})=\left(e,1,-\tfrac{1}{2}(\tau^{-1}x^{\ast}\theta^{L}\cdot y^{\ast}\theta^{R}+x^{\ast}\theta^{R}\cdot\kappa^{-1}y^{\ast}\theta^{L})\right),
\]
so that:
\begin{equation}
\omega_{D_{\sigma}}=-\tfrac{1}{2}\left(\tau^{-1}(x^{\ast}\theta^{L})\cdot y^{\ast}\theta^{R}+x^{\ast}\theta^{R}\cdot\kappa^{-1}(y^{\ast}\theta^{L})\right).\label{Eq_2Form_Double}
\end{equation}
The $\mathrm{Map}(S,G)$-invariance of $\omega_{\sigma}$ easily follows
from this equation. Modifying the identities used in the proof of
\cite[Prop.3.2]{[AMM98]} to incorporate the automorphisms $\tau,\kappa\in\mathrm{Aut}(G)$,
one checks the moment map condition, the equation $d\omega_{D_{\sigma}}=\Phi_{D_{\sigma}}^{\ast}\eta_{G\times G}$,
as well as the minimal degeneracy condition. 
\end{example}

\subsubsection{Fusion and reduction}

Let $\Sigma$ be a possibly disconnected bordered surface with boundary
basepoints $S=\{p_{j}\}_{j=1}^{b}$, and fix a twist $\sigma\in\mathrm{Hom}\left(\Pi,\mathrm{Aut}(G)\right)$.
For simplicity, we denote the moment map of the associated moduli
space by: 
\[
\Phi=(\Phi_{1},\cdots,\Phi_{b}):\ M_{\sigma}(\Sigma,G)\longrightarrow G\kappa_{1}\times\cdots\times G\kappa_{b},
\]
where $\kappa_{j}=\sigma_{V_{j}^{-1}}\in\mathrm{Aut}(G)$ for $j=1,\cdots,b$.

For integers $1\le i<j\le b$, the \textbf{internal fusion} $\left(M_{\sigma}(\Sigma,G)\right)_{ij}$
corresponds to the moduli space of the surface $\Sigma_{(ij)}=\Sigma\cup_{V_{i},V_{j}}\Sigma_{0}^{3}$,
obtained by gluing a pair of pants $\Sigma_{0}^{3}$ to the boundary
components $V_{i}$ and $V_{j}$ of $\Sigma$, as depicted in Figure
\ref{Fig_Gluing_Internal_Fusion}. Proposition \ref{Prop_Def_Internal_Fusion}
gives the tq-Hamiltonian structure on $\left(M_{\sigma}(\Sigma,G)\right)_{ij}$,
for which the moment map is given by: 
\begin{eqnarray*}
\Phi_{ij}:\left(M_{\sigma}(\Sigma,G)\right)_{ij} & \longrightarrow & G\kappa_{i}\kappa_{j}\times G\kappa_{1}\times\cdots\widehat{G\kappa_{i}}\times\cdots\widehat{G\kappa_{j}}\times\cdots\times G\kappa_{b},\\
\rho & \longmapsto & \left(\Phi_{i}\cdot(\kappa_{i}\circ\Phi_{j}),\ \Phi_{1},\cdots,\widehat{\Phi_{i}},\cdots,\widehat{\Phi_{j}},\cdots,\Phi_{b}\right)(\rho).
\end{eqnarray*}
We illustrate this with the generalization of \cite[Ex.6.1]{[AMM98]}
to our setup, the fused double $\mathbb{D}_{\varphi}(G)$.

\begin{figure}[t]
\begin{centering}
\includegraphics[scale=0.4]{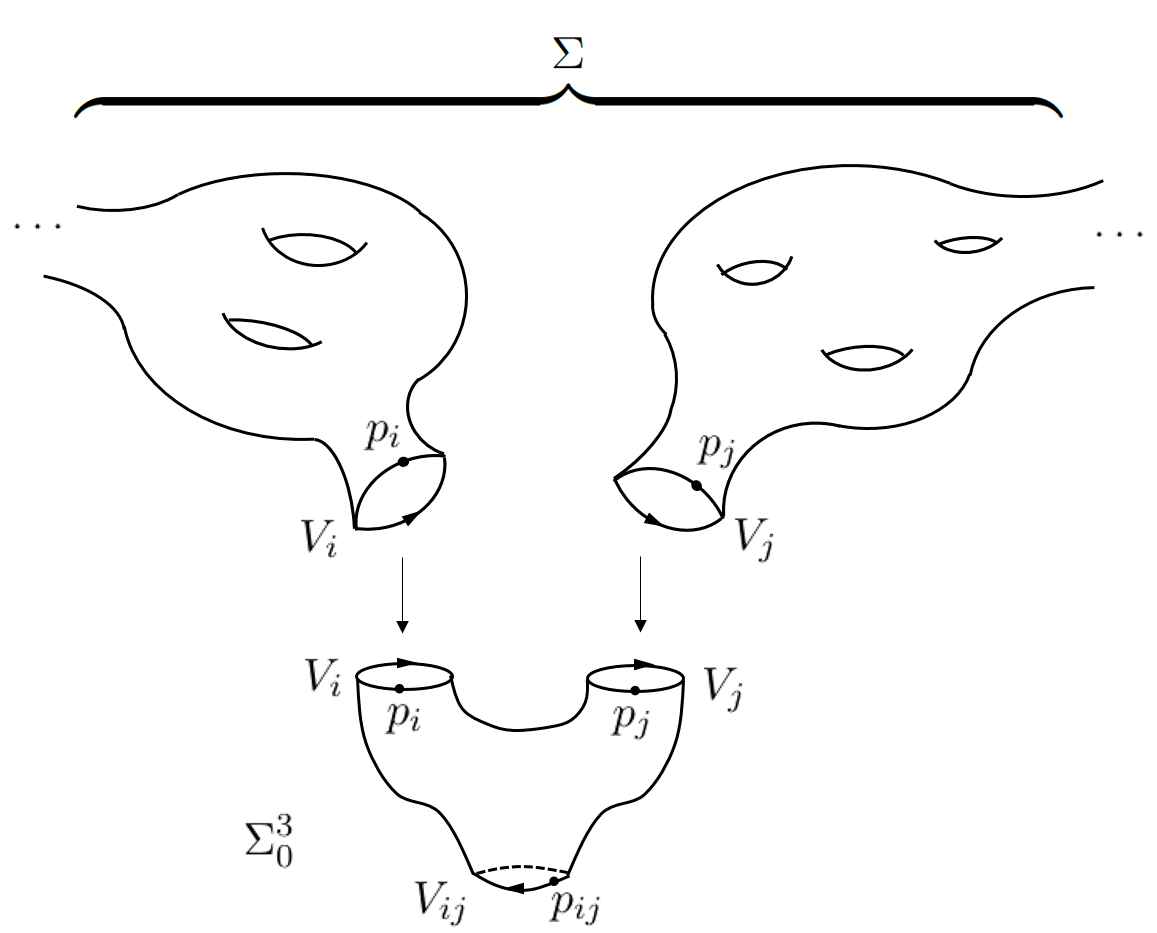}
\par\end{centering}
\caption{Gluing of surfaces and internal fusion}
\label{Fig_Gluing_Internal_Fusion}
\end{figure}

\begin{example}
\label{Ex_Moduli_Fused_Double} \textbf{(The fused double)} Keeping
the notation and objects of Example \ref{Ex_Moduli_Double}, consider
a pair of pants $\Sigma_{0}^{3}$ with paths $W_{1}$ and $W_{2}$
joining the basepoints as depicted in Figure \ref{Fig_Fused_Double_Illustration},
which we glue to the cylinder $\Sigma_{0}^{2}$ to obtain the surface
$\Sigma_{1}^{1}$.

\begin{figure}[H]
\begin{centering}
\includegraphics[scale=0.5]{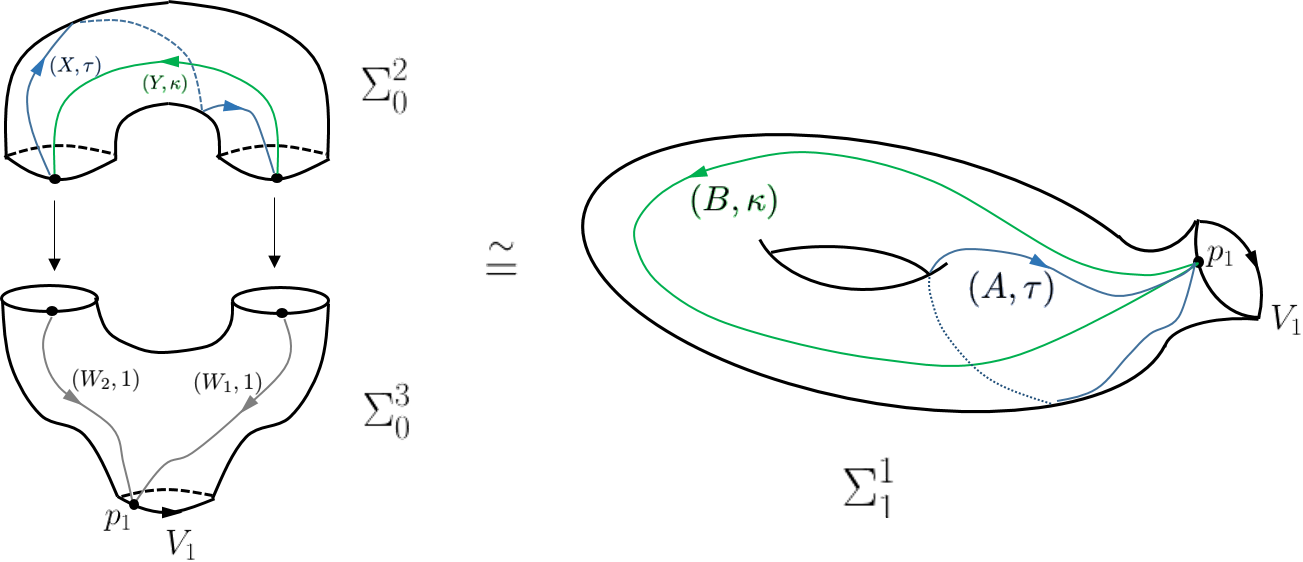}
\par\end{centering}
\caption{Gluing, generators and twist for $\Pi_{1}(\Sigma_{1}^{1},\{p_{1}\})$}
\label{Fig_Fused_Double_Illustration}
\end{figure}

The paths $A=W_{1}XW_{2}^{-1}$ and $B=W_{2}YW_{1}^{-1}$ give a system
of free generators for $\Pi=\Pi_{1}(\Sigma_{1}^{1},p_{1})$, and we
obtain the twist $\varphi\in\mathrm{Hom}\left(\Pi,\mathrm{Aut}(G)\right)$
satisfying: 
\[
\varphi_{A}=\tau\mbox{, }\varphi_{B}=\kappa.
\]
Parametrizing $\rho\in\mathbb{D}_{\varphi}(G)=M_{\varphi}(\Sigma_{1}^{1},G)$
using $(a,b)=(\rho_{A},\rho_{B})$, the action of the gauge group
$\mathrm{Map}(\{p_{1}\},G)=G$ is expressed as: 
\[
g\cdot(a,b)=\left(\mathrm{Ad}_{g}^{\tau}(a),\mathrm{Ad}_{g}^{\kappa}(b)\right),\ \ \forall g\in G,
\]
while the moment map $\Phi_{\mathbb{D}_{\varphi}}:\mathbb{D}_{\varphi}(G)\to G[\tau,\kappa]$
is given by: 
\[
\Phi_{\mathbb{D}_{\varphi}}(a,b)=\Phi_{1}(a,b)\tau\kappa\left(\Phi_{2}(a,b)\right),
\]
with $\Phi_{i}$ as in equation (\ref{Eq_Components_Moment_Dble})
and $[\tau,\kappa]=\tau\kappa\tau^{-1}\kappa^{-1}$. The equivariance
property in this case reads: 
\[
\Phi_{\mathbb{D}_{\varphi}}\left(g\cdot(a,b)\right)=\mathrm{Ad}_{g}^{[\tau,\kappa]}\left(\Phi_{\mathbb{D}_{\varphi}}(a,b)\right),\ \ \forall g\in G.
\]
Next, cutting $\Sigma_{1}^{1}$ along the paths $A$ and $B$, and
assigning the appropriate elements of $K\left(\mathbb{D}_{\varphi}(G)\right)$
to the edges of the resulting pentagon, equation (\ref{Eq_TMS_Form_Severa})
yields: 
\begin{eqnarray*}
(e,1,\omega_{\mathbb{D}_{\varphi}}) & = & \left(\Phi_{\mathbb{D}_{\varphi}},[\tau,\kappa],0\right)^{-1}\left(a,\tau,0\right)\left(b,\kappa,0\right)\left(a,\tau,0\right)^{-1}\left(b,\kappa,0\right)^{-1},\\
 & = & \left(\Phi_{\mathbb{D}_{\varphi}},[\tau,\kappa],0\right)^{-1}\left(\Phi_{1},\tau\kappa,-\tfrac{1}{2}\tau^{-1}a^{\ast}\theta^{L}\cdot b^{\ast}\theta^{R}\right)\left(\Phi_{2},\tau^{-1}\kappa^{-1},-\tfrac{1}{2}a^{\ast}\theta^{R}\cdot\kappa^{-1}b^{\ast}\theta^{L}\right),
\end{eqnarray*}
so that the resulting 2-form $\omega_{\mathbb{D}_{\varphi}}\in\Omega^{2}\left(\mathbb{D}_{\varphi}(G)\right)^{G}$
coincides with the one given by Proposition \ref{Prop_Def_Internal_Fusion}:
\[
\omega_{\mathbb{D}_{\varphi}}=\omega_{D_{\sigma}}-\tfrac{1}{2}\Phi_{1}^{\ast}\theta^{L}\cdot\Phi_{2}^{\ast}(\tau\kappa)^{\ast}\theta^{R},
\]
where as in Example \ref{Ex_Moduli_Double}: 
\[
\omega_{D_{\sigma}}=-\tfrac{1}{2}\left(\tau^{-1}(a^{\ast}\theta^{L})\cdot b^{\ast}\theta^{R}+a^{\ast}\theta^{R}\cdot\kappa^{-1}(b^{\ast}\theta^{L})\right).
\]
Thus, $\left(\mathbb{D}_{\varphi}(G),\omega_{\mathbb{D}_{\varphi}},\Phi_{\mathbb{D}_{\varphi}}\right)$
is a tq-Hamiltonian $G$-space with $G[\tau,\kappa]$-valued moment
map. 
\end{example}

More generally for $b,h\ge1$, the connected bordered surface $\Sigma_{h}^{b}$
is obtained by ``fusing'' $h$ one-holed tori $\Sigma_{1}^{1}$
and $(b-1)$ cylinders $\Sigma_{0}^{2}$, and we can hence identify
the corresponding moduli space with a product: 
\begin{equation}
M_{\sigma}(\Sigma_{h}^{b},G)=\underbrace{\left(\mathbb{D}_{\varphi_{1}}(G)\circledast\cdots\circledast\mathbb{D}_{\varphi_{h}}(G)\right)}_{h}\circledast\underbrace{\left(D_{\sigma_{1}}(G)\circledast\cdots\circledast D_{\sigma_{b-1}}(G)\right)}_{(b-1)}\label{Eq_TMS_GenSurface_Fusion}
\end{equation}
for appropriate twists $\sigma_{i},\varphi_{j}$.

For the symplectic reduction of the spaces $M_{\sigma}(\Sigma_{h}^{b},G)$,
let $\mathcal{C}_{i}$ be a $\kappa_{i}$-twisted conjugacy class
in $G$, and let: 
\[
\vec{\mathcal{C}}=(\mathcal{C}_{1},\cdots,\mathcal{C}_{b}).
\]
By Proposition \ref{Prop_Reduction}, the reduced space: 
\[
\mathcal{M}_{\sigma}(\Sigma,\vec{\mathcal{C}}):=\left(M_{\sigma}(\Sigma,G)\circledast\mathcal{C}_{1}^{-}\circledast\cdots\circledast\mathcal{C}_{b}^{-}\right)/\!/G^{b}.
\]
is a singular symplectic space, and two of its main properties are
that:
\begin{itemize}
\item[(i)] As a moduli space of flat connections, $\mathcal{M}_{\sigma}(\Sigma,\vec{\mathcal{C}})$
parametrizes the connections for which the $i$th boundary holonomy
takes values in the conjugacy class $\mathcal{C}_{i}\subseteq G\kappa_{i}$.
\item[(ii)] If $\overline{\Sigma}$ is the surface obtained by capping-off the
boundary components of $\Sigma$, the representation variety: 
\[
\mathrm{Hom}_{\sigma}\left(\pi_{1}(\overline{\Sigma}),G\right)/G\cong M_{\sigma}(\Sigma,G)/\!/\mathrm{Map}(S,G)
\]
carries a natural Poisson structure \cite{[Boa14],[BY15]}, for which
the symplectic leaves are precisely the quotients $\mathcal{M}_{\sigma}(\Sigma,\vec{\mathcal{C}})$.
\end{itemize}
\begin{rem}
We discussed tq-Hamiltonian manifolds $(M,\omega,\Phi)$ for which
the moment map takes values in $G\rtimes\mathrm{Aut}(G)$, but where
only the identity component $G$ acts on $M$. A natural question
that arises is whether it is possible to develop a theory where \textit{the
disconnected group $G\rtimes\mathrm{Aut}(G)$ acts on }$M$. Although
this is an open question we hope to address in other work, we note
that it is possible to construct such examples from character varieties.
For instance, consider the surface $\Sigma=\Sigma_{2}^{2}$ and $\kappa\in\mathrm{Aut}(G)$
with $|\kappa|=2$. One can implement an action of $\langle\kappa\rangle$
on $\Pi_{1}(\Sigma,\{p_{1},p_{2}\})$, by letting $\kappa$ permute
the boundary circles and the generators $\{A_{i},B_{i}\}$ of the
handles, which gives an action of $(G\rtimes\langle\kappa\rangle)^{2}$
on $\mathrm{Hom}(\Pi,G\rtimes\langle\kappa\rangle)$.
\end{rem}

\section{Duistermaat-Heckman Measures and Localization\label{Sec_DH_Meas}}

This section studies the Duistermaat-Heckman measure associated to
a twisted q-Hamiltonian manifold, by extending some of the main results
of \cite{[AMW00],[AMW02]} to our setup. To put things into context,
let $(M,\omega,\Phi)$ be a Hamiltonian $G$-manifold with Liouville
form $\Lambda_{M}$ and DH measure $\mathrm{DH}_{\Phi}=\Phi_{\ast}|\Lambda_{M}|\in\mathcal{D}'(\mathfrak{g}^{\ast})^{G}$.
The Duistermaat-Heckman localization theorem states that at $\xi\in\mathfrak{g}$,
the Fourier transform of $\mathrm{DH}_{\Phi}$ localizes to integrals
over the connected components of the vanishing set $(\xi_{M})^{-1}(0)$.
In equation form \cite{[GGKBk],[ParHDR],[GLSBk],[BGV]}:
\begin{equation}
\int_{\mathfrak{g}^{\ast}}e^{2\pi\mathsf{i}\langle\mu,\xi\rangle}d\mathrm{DH}_{\Phi}(\mu)=\sum_{Z\subset\xi_{M}^{-1}(0)}\int_{Z}\frac{e^{\iota_{Z}^{\ast}(\omega+\langle\Phi,2\pi\mathsf{i}\xi\rangle)}}{\mathrm{Eul}(\mathcal{N}_{Z},2\pi\mathsf{i}\xi)},\label{Eq_DH_Localization}
\end{equation}
where $\iota_{Z}:Z\hookrightarrow M$ is the inclusion and $\mathrm{Eul}(\mathcal{N}_{Z},\cdot)$
is the equivariant Euler form of the normal bundle $\mathcal{N}_{Z}=TM|_{Z}/TZ$.

The main result of this work, Theorem \ref{Thm_Twisted_DH}, generalizes
equation (\ref{Eq_DH_Localization}) to the case of a $G\kappa$-valued
tq-Hamiltonian manifold $(M,\omega,\Phi)$. Here, $\mathrm{DH}_{\Phi}$
is an $\mathrm{Ad}_{G}^{\kappa}$-invariant measure on $G$, and is
given by a Fourier series in the $\kappa$-twining characters $\{\tilde{\chi}_{\lambda}^{\kappa}\}\subset L^{2}(G\kappa)^{G}$.
The RHS of equation (\ref{Eq_DH_Localization}) is replaced by a Fourier
coefficient $\langle\mathrm{DH}_{\Phi},\tilde{\chi}_{\lambda}^{\kappa}\rangle$,
while the localized integrals of the LHS now involve subgroups of
the max torus $T\subseteq G$ defined by $\kappa\in\mathrm{Aut}(G)$.

The upcoming subsections are organized as follows. Section \ref{Subsec_Twining_Char}
reviews twining characters and their properties. Section \ref{Subsec_DH_Basics}
discusses the basics of DH measures of tq-Hamiltonian manifold. One
of the delicate points there is the construction of the corresponding
Liouville form $\Lambda_{M}$, which is carried out in section \ref{Subsec_Dirac_Str_Pure_Spinors}.
Section \ref{Subsec_DH_Localization} deals with the localization
theorem, and also builds upon results in section \ref{Subsec_Dirac_Str_Pure_Spinors}.
Finally, we return to twisted moduli spaces in section \ref{SubSec_Application_TMS},
where we compute their DH measures.

\subsection{Twining characters\label{Subsec_Twining_Char}}

In the entirety of this section, $G$ denotes a compact, connected,
simply connected and simple Lie group. Unless otherwise stated, $\kappa\in\mathrm{Out}(G)$
is induced by a Dynkin diagram automorphism, and as explained in section
\ref{SubSec_Twist_Conj}, we view $\mathrm{Out}(G)$ as a subgroup
of $\mathrm{Aut}(G)$. We continue with the notation introduced at
the beginning of section \ref{SubSec_Twist_Conj}.

\subsubsection{Notation}

For a dominant weight $\lambda\in\Lambda_{+}^{\ast}$, we denote the
corresponding irreducible representation by $(\rho_{\lambda},V_{\lambda})$,
and the corresponding irreducible character by $\chi_{\lambda}$.
Letting $v_{\lambda}\in V_{\lambda}$ denote the normalized highest
weight vector, we will denote by: 
\begin{equation}
\Delta_{\lambda}(g):=\langle v_{\lambda},\rho_{\lambda}(g)\cdot v_{\lambda}\rangle,\ \ \forall g\in G,\label{Eq_Spherical_Harmonic_lambda}
\end{equation}
the ``spherical harmonic'' function corresponding to $\lambda\in\Lambda_{+}^{\ast}$.
Letting $\rho=\frac{1}{2}\sum_{\alpha\in\mathfrak{R}_{+}}\alpha$
denote the half-sum of positive roots of $G$, we note that $\rho\in(\mathfrak{t}^{\ast})^{\kappa}$.
Lastly, we denote the $\kappa$-fixed dominant weights by $(\Lambda_{+}^{\ast})^{\kappa}=\Lambda_{+}^{\ast}\cap(\mathfrak{t}^{\ast})^{\kappa}$.

\subsubsection{Definitions}

Let $(\rho_{V},V)$ be a representation of $G$. We say that such
a representation is $\bm{\kappa}$\textbf{-admissible} if it admits
an implementation of $\kappa$ on $V$ that is compatible with the
action of $G$, that is, if there exists a unitary operator $\tilde{\kappa}_{V}\in\mathrm{Aut}(V)$
such that:
\[
\rho_{V}\left(\kappa(g)\right)=\tilde{\kappa}_{V}\circ\rho_{V}(g)\circ\tilde{\kappa}_{V}^{-1},\ \ \forall g\in G.
\]
These representations are closed under direct sums and tensor products.
In the particular case that $V=V_{\lambda}$ for $\lambda\in(\Lambda_{+}^{\ast})^{\kappa}$,
Schur's lemma gives the existence of a unique unitary automorphism
$\tilde{\kappa}_{\lambda}\in\mathrm{Aut}(V_{\lambda})$ satisfying
the equation above and preserving the highest weight vector $v_{\lambda}\in V_{\lambda}$.
\begin{defn}
\label{Def_Twining_Char}Let $(\rho_{V},V)$ be a $\kappa$-admissible
representation of $G$, and let $\tilde{\kappa}_{V}\in\mathrm{Aut}(V)$
be an implementation restricting to $\tilde{\kappa}_{\lambda}$ on
each irreducible summand $V_{\lambda}$ with $\lambda\in(\Lambda_{+}^{\ast})^{\kappa}$.
The $\bm{\kappa}$\textbf{-twining character} of $(\rho_{V},V)$ is
the function $G\rightarrow\mathbb{C}$ given for any $g\in G$ by:
\[
\tilde{\chi}_{V}^{\kappa}(g):=\mathrm{tr}_{_{V}}\left(\tilde{\kappa}_{V}\circ\rho_{V}(g)\right).
\]
For $V=V_{\lambda}$ irreducible, we denote by $\tilde{\chi}_{\lambda}^{\kappa}$
the corresponding twining character.
\end{defn}

For a $\kappa$-admissible representation $(\rho_{V},V)$, the non-vanishing
contributions to the function $\tilde{\chi}_{V}^{\kappa}$ only come
from the irreducible factors $V_{\lambda}\subseteq V$ such that $\lambda\in(\Lambda_{+}^{\ast})^{\kappa}$,
since the implementation $\tilde{\kappa}_{V}\in\mathrm{Aut}(V)$ necessarily
acts a permutation on the remaining irreducible factors \cite[\S 4.1]{[Zer18a]}.
For a second $\kappa$-admissible representation $(\rho_{V'},V')$,
one easily checks that:
\[
\tilde{\chi}_{V\oplus V'}^{\kappa}=\tilde{\chi}_{V}^{\kappa}+\tilde{\chi}_{V'}^{\kappa},\ \ \tilde{\chi}_{V\otimes V'}^{\kappa}=\tilde{\chi}_{V}^{\kappa}\cdot\tilde{\chi}_{V'}^{\kappa},
\]
as it is the case with usual characters.
\begin{rem}
\label{Rk_Twining_Char_Discon_Gp}An alternative way of introducing
$\kappa$-twining characters uses representations of the disconnected
group $G\rtimes\langle\kappa\rangle$. A detailed reference for this
material is Mohrdieck's \cite[\S 2.4]{[Mo00]}. By \cite[Prop.2.7]{[Mo00]},
an irrep $V$ of $G\rtimes\langle\kappa\rangle$ is either of the
form $V=\oplus_{i=1}^{|\kappa|}V_{\kappa^{i}(\lambda)}$ for $\lambda\in\Lambda_{+}^{\ast}$
such that $\kappa(\lambda)\ne\lambda$, or of the form $V=V_{\lambda}$
for $\lambda\in(\Lambda_{+}^{\ast})^{\kappa}$, with $|\kappa|$ inequivalent
homomorphisms $\rho_{\lambda,j}:G\rtimes\langle\kappa\rangle\to\mathrm{Aut}(V_{\lambda})$,
coming from the distinct implementations $\exp(\frac{2\pi\mathsf{i}}{|\kappa|}j)\tilde{\kappa}_{\lambda}$
of $\kappa$ on $V_{\lambda}$. 

Under the identification $G\kappa\equiv G$, the $\kappa$-admissible
representations of $G$ are the restrictions to the component $G\kappa$
of the representations of $G\rtimes\langle\kappa\rangle$. At the
level of irreducible characters, the restriction $\chi_{V}|_{G\kappa}$
vanishes for $\kappa(\lambda)\ne\lambda$ \cite[Prop.2.8]{[Mo00]},
while $\chi_{V}|_{G\kappa}\equiv\exp(\frac{2\pi\mathsf{i}}{|\kappa|}j)\tilde{\chi}_{\lambda}^{\kappa}$
for $\lambda\in(\Lambda_{+}^{\ast})^{\kappa}$, $j=1,\cdots,|\kappa|$. 

The reason for introducing twining characters is that we prefer to
think in terms of class functions for $\mathrm{Ad}_{G}^{\kappa}$
on the group $G$, as opposed to restrictions of functions defined
on $G\rtimes\langle\kappa\rangle$. This is also the motivation for
introducing $\kappa$-admissible representations.
\end{rem}

\subsubsection{Twining characters as $L^{2}$ functions}

Let $L^{2}(G)$ be the space of $\mathbb{C}$-valued $L^{2}$-functions
on $G$ with respect to the normalized Haar measure $dg$, and recall
the convolution product on continuous functions:
\[
\left(\psi\ast\varphi\right)(x)=\int_{G}\psi(xg^{-1})\varphi(g)dg,\ \ \psi,\varphi\in\mathcal{C}^{0}(G,\mathbb{C}).
\]
Let $L^{2}(G\kappa)^{G}\subset L^{2}(G)$ denote the subspace of $\mathrm{Ad}_{G}^{\kappa}$-invariant
functions. The irreducible twining characters $\{\tilde{\chi}_{\lambda}^{\kappa}\}_{\lambda\in(\Lambda_{+}^{\ast})^{\kappa}}$
extend several properties of the usual characters:
\begin{prop}
\label{Prop_Twining_Char_L2_Class_Fns}With the notations of this
section: 
\begin{enumerate}
\item The averaging map $\mathrm{Av}^{\kappa}:L^{2}(G)\rightarrow L^{2}(G\kappa)^{G}\mbox{, }f\mapsto\int_{G}((\mathrm{Ad}_{g}^{\kappa})^{\ast}f)dg$
is an orthogonal projection.
\item The twining characters $\{\tilde{\chi}_{\lambda}^{\kappa}\}_{\lambda\in(\Lambda_{+}^{\ast})^{\kappa}}$
generate a dense subspace of $L^{2}(G\kappa)^{G}$, and satisfy the
orthogonality relations: 
\[
\langle\tilde{\chi}_{\lambda}^{\kappa},\tilde{\chi}_{\mu}^{\kappa}\rangle_{L^{2}}=\delta_{\mu\lambda},\ \ \lambda,\mu\in(\Lambda_{+}^{\ast})^{\kappa}.
\]
\item For a second Dynkin diagram automorphism $\tau\in\mathrm{Out}(G)$,
one has the identities: 
\begin{equation}
\left(\tilde{\chi}_{\lambda}^{\kappa}\ast\tilde{\chi}_{\mu}^{\tau}\right)(x)=(\dim V_{\lambda})^{-1}\delta_{\lambda\mu}\tilde{\chi}_{\lambda}^{\tau\kappa}(x)=(\dim V_{\lambda})^{-1}\delta_{\lambda\mu}\tilde{\chi}_{\lambda}^{\kappa\tau}\left(\kappa(x)\right),\ \ \forall x\in G,\label{Eq_Twining_Convolution}
\end{equation}
\begin{equation}
\left(\tilde{\chi}_{\lambda}^{\kappa}\otimes\tilde{\chi}_{\lambda}^{\tau}\right)(x,y)=\dim V_{\lambda}\int_{G}\tilde{\chi}_{\lambda}^{(\kappa_{1}\kappa_{2})}\left(gx\kappa_{1}(g^{-1})\kappa_{1}(y)\right)dg,\ \ \forall(x,y)\in G\times G,\label{Eq_Prod_Twining}
\end{equation}
for all $\lambda,\mu\in(\Lambda_{+}^{\ast})^{\kappa}\cap(\Lambda_{+}^{\ast})^{\tau}$.
\item For $\lambda\in(\Lambda_{+}^{\ast})^{\kappa}$, the spherical harmonic
$\Delta_{\lambda}:G\rightarrow\mathbb{C}$ of eq. (\ref{Eq_Spherical_Harmonic_lambda})
satisfies the identity: 
\[
\int_{G}\Delta_{\lambda}(\mathrm{Ad}_{g}^{\kappa}x)dg=(\dim V_{\lambda})^{-1}\tilde{\chi}_{\lambda}^{\kappa}(x).
\]
\end{enumerate}
\end{prop}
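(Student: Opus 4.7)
The plan is to establish the four statements in order, relying on the Peter--Weyl theorem for the disconnected group $G\rtimes\langle\kappa\rangle$ (as recalled in Remark \ref{Rk_Twining_Char_Discon_Gp}) together with Schur orthogonality of matrix coefficients of $G$. Throughout, the crucial fact is that each $\tilde{\kappa}_{\lambda}\in\mathrm{Aut}(V_{\lambda})$ is the \emph{unique} unitary implementation of $\kappa$ fixing the highest weight vector $v_{\lambda}$, which forces the composition rule $\tilde{(\tau\kappa)}_{\lambda}=\tilde{\tau}_{\lambda}\tilde{\kappa}_{\lambda}$ whenever $\lambda\in(\Lambda_{+}^{\ast})^{\kappa}\cap(\Lambda_{+}^{\ast})^{\tau}$.

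For (1), I would separately verify that $\mathrm{Av}^{\kappa}$ maps into $L^{2}(G\kappa)^{G}$, is idempotent there, and is self-adjoint. The first follows from a change of variable $g\mapsto hg$ in the defining integral; the second is immediate since $\mathrm{Ad}^{\kappa}$ is a $G$-action and averaging an invariant function returns it; and the third reduces to the observation that $h\mapsto gh\kappa(g^{-1})$ is a composition of one left and one right translation on $G$, both of which preserve the Haar measure. Together these three properties characterize the orthogonal projection onto $L^{2}(G\kappa)^{G}$.

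For (2), I would identify $L^{2}(G\kappa)^{G}$ with the space of $L^{2}$ class functions on $G\rtimes\langle\kappa\rangle$ supported on the component $G\kappa$ (via restriction), and invoke Peter--Weyl on the disconnected group: the irreducible characters form an orthonormal basis of the $L^{2}$ class functions. Those that do not vanish identically on $G\kappa$ are exactly the $\rho_{\lambda,j}$ of Remark \ref{Rk_Twining_Char_Discon_Gp}, with $\lambda\in(\Lambda_{+}^{\ast})^{\kappa}$, and their restrictions equal $\exp(\tfrac{2\pi\mathsf{i}}{|\kappa|}j)\tilde{\chi}_{\lambda}^{\kappa}$. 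Averaging the phases in $j$ and accounting for the measure normalization $dg_{G\rtimes\langle\kappa\rangle}=\tfrac{1}{|\kappa|}\sum_{i}dg_{G\kappa^{i}}$ converts the Peter--Weyl orthonormality into $\langle\tilde{\chi}_{\lambda}^{\kappa},\tilde{\chi}_{\mu}^{\kappa}\rangle_{L^{2}}=\delta_{\lambda\mu}$ and density of the $\tilde{\chi}_{\lambda}^{\kappa}$ in $L^{2}(G\kappa)^{G}$.

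For (3), the convolution identity follows by direct computation:
\[
(\tilde{\chi}_{\lambda}^{\kappa}\ast\tilde{\chi}_{\mu}^{\tau})(x)=\int_{G}\mathrm{tr}\bigl(\tilde{\kappa}_{\lambda}\rho_{\lambda}(x)\rho_{\lambda}(g^{-1})\bigr)\,\mathrm{tr}\bigl(\tilde{\tau}_{\mu}\rho_{\mu}(g)\bigr)\,dg,
\]
after which Schur's orthogonality of matrix coefficients collapses the integral to $(\dim V_{\lambda})^{-1}\delta_{\lambda\mu}\,\mathrm{tr}\bigl(\tilde{\tau}_{\lambda}\tilde{\kappa}_{\lambda}\rho_{\lambda}(x)\bigr)$, and uniqueness of the $v_{\lambda}$-preserving implementation identifies this with $(\dim V_{\lambda})^{-1}\delta_{\lambda\mu}\tilde{\chi}_{\lambda}^{\tau\kappa}(x)$. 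The alternative form $\tilde{\chi}_{\lambda}^{\kappa\tau}(\kappa(x))$ is obtained by cyclicity of the trace combined with the intertwining relation $\rho_{\lambda}(\kappa(x))=\tilde{\kappa}_{\lambda}\rho_{\lambda}(x)\tilde{\kappa}_{\lambda}^{-1}$. For the product identity, I would first use $\rho_{\lambda}(\kappa_{1}(g^{-1}))=\tilde{\kappa_{1}}_{\lambda}\rho_{\lambda}(g^{-1})\tilde{\kappa_{1}}_{\lambda}^{-1}$ and $\rho_{\lambda}(\kappa_{1}(y))=\tilde{\kappa_{1}}_{\lambda}\rho_{\lambda}(y)\tilde{\kappa_{1}}_{\lambda}^{-1}$ to rewrite the integrand of the right hand side as $\mathrm{tr}\bigl(\tilde{\kappa_{2}}_{\lambda}\rho_{\lambda}(g)\,\rho_{\lambda}(x)\tilde{\kappa_{1}}_{\lambda}\,\rho_{\lambda}(g^{-1})\rho_{\lambda}(y)\bigr)$, and then apply the Schur averaging formula $\int_{G}\rho_{\lambda}(g)A\rho_{\lambda}(g^{-1})\,dg=(\dim V_{\lambda})^{-1}\mathrm{tr}(A)\,I$ with $A=\rho_{\lambda}(x)\tilde{\kappa_{1}}_{\lambda}$. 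Statement (4) is the same Schur averaging formula applied to $\Delta_{\lambda}(\mathrm{Ad}_{g}^{\kappa}x)=\langle v_{\lambda},\rho_{\lambda}(g)\rho_{\lambda}(x)\tilde{\kappa}_{\lambda}\rho_{\lambda}(g^{-1})v_{\lambda}\rangle$, after using $\tilde{\kappa}_{\lambda}^{\pm 1}v_{\lambda}=v_{\lambda}$ to remove the $\tilde{\kappa}_{\lambda}^{-1}$ on the right.

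The main obstacle is the bookkeeping in (3): carefully propagating the intertwining relation for $\tilde{\kappa}_{\lambda}$ past various factors of $\rho_{\lambda}$, and using uniqueness of the highest weight preserving implementation to recognize products like $\tilde{\tau}_{\lambda}\tilde{\kappa}_{\lambda}$ as $\tilde{(\tau\kappa)}_{\lambda}$. Once these identifications are in place, (3) and (4) are direct consequences of Schur orthogonality, while (1) and (2) are essentially formal once the disconnected-group viewpoint of Remark \ref{Rk_Twining_Char_Discon_Gp} is invoked.
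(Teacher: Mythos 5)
Your proof proposal is correct, but it takes a genuinely different route from the paper's on the one identity the paper actually proves. The paper proves only the product identity (\ref{Eq_Prod_Twining}), deferring (1), (2), (4) and the convolution identity (\ref{Eq_Twining_Convolution}) to \cite[Prop.4.4]{[Zer18a]}. The paper's proof of (\ref{Eq_Prod_Twining}) is indirect: it observes that the right-hand side defines an $\mathrm{Ad}_{G}^{\kappa}\times\mathrm{Ad}_{G}^{\tau}$-invariant function $f$ on $G\times G$, then computes a single inner product $\langle f,\tilde{\chi}_{\lambda}^{\kappa}\otimes\tilde{\chi}_{\lambda}^{\tau}\rangle=1$ (using the already-established convolution identity and orthogonality), and concludes by density of twining characters in the space of invariant $L^2$ functions. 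Your approach is instead a direct Schur-averaging computation: you unwind the integrand into matrix-coefficient form, push the implementation $\tilde{\kappa}_{\lambda}$ through $\rho_{\lambda}$ via the intertwining relation, and apply $\int_{G}\rho_{\lambda}(g)A\rho_{\lambda}(g^{-1})dg=(\dim V_{\lambda})^{-1}\mathrm{tr}(A)I$. This is more elementary and self-contained (it does not presuppose parts (2) and (3)(i) in proving (3)(ii)), at the cost of some bookkeeping with the composition rule $\widetilde{(\kappa\tau)}_{\lambda}=\tilde{\kappa}_{\lambda}\tilde{\tau}_{\lambda}$, which you correctly reduce to the uniqueness of the highest-weight-fixing implementation. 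The paper's argument is shorter once the orthonormal-basis structure is in place; yours generalizes more cleanly to similar integral identities. Both are valid, and your treatment of (1), (2), (4) via the disconnected-group Peter--Weyl viewpoint of Remark \ref{Rk_Twining_Char_Discon_Gp} matches what the cited reference does.
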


We only prove equation (\ref{Eq_Prod_Twining}) here, since the remaining
statements of are examined in the proof of \cite[Prop.4.4]{[Zer18a]}.

\noindent\emph{Proof of eq.(\ref{Eq_Prod_Twining})} The function:
\[
f(x,y)=\dim V_{\lambda}\int_{G}\tilde{\chi}_{\lambda}^{(\kappa\tau)}\left(gx\kappa(g^{-1})\kappa(y)\right)dg,
\]
is an $\mathrm{Ad}_{G}^{\kappa}\times\mathrm{Ad}_{G}^{\tau}$-invariant
$L^{2}$ function on $G\times G$. It suffices to show that $\langle\tilde{\chi}_{\lambda}^{\kappa}\otimes\tilde{\chi}_{\lambda}^{\tau},f\rangle=1$,
which implies by the orthogonality and density of twining characters
that $f\equiv\tilde{\chi}_{\lambda}^{\kappa}\otimes\tilde{\chi}_{\lambda}^{\tau}$.
Using the invariance of the Haar measure under inversion and $\mathrm{Out}(G)$,
along with the convolution of twining characters and the identities:
\[
\overline{\tilde{\chi}_{\lambda}^{\tau}(y)}=\tilde{\chi}_{\lambda}^{(\tau^{-1})}(y^{-1})=\tilde{\chi}_{\lambda}^{(\tau^{-1})}\left(\tau^{-1}(y^{-1})\right),
\]
one computes that for any $h\in G$:
\[
\int_{G}\tilde{\chi}_{\lambda}^{(\kappa\tau)}\left(h\kappa(y)\right)\overline{\tilde{\chi}_{\lambda}^{\tau}(y)}dy=\left(\tilde{\chi}_{\lambda}^{(\kappa\tau)}\ast\tilde{\chi}_{\lambda}^{(\tau^{-1})}\right)\left((\kappa\tau)^{-1}(h)\right)=(\dim V_{\lambda})^{-1}\tilde{\chi}_{\lambda}^{\kappa}(h).
\]
Using this last equation with $h=gx\kappa(g^{-1})$, we now have:
\begin{eqnarray*}
\langle f,\tilde{\chi}_{\lambda}^{\kappa}\otimes\tilde{\chi}_{\lambda}^{\tau}\rangle & = & \dim V_{\lambda}\int_{G\times G}\left[\int_{G}\tilde{\chi}_{\lambda}^{(\kappa\tau)}\left(gx\kappa(g^{-1})\kappa(y)\right)\overline{\tilde{\chi}_{\lambda}^{\tau}(y)}dy\right]\overline{\tilde{\chi}_{\lambda}^{\kappa}(x)}dgdx\\
 & = & \int_{G\times G}\tilde{\chi}_{\lambda}^{\kappa}\left(gx\kappa(g^{-1})\right)\overline{\tilde{\chi}_{\lambda}^{\kappa}(x)}dgdx=\left(\int_{G}dg\right)\langle\tilde{\chi}_{\lambda}^{\kappa},\tilde{\chi}_{\lambda}^{\kappa}\rangle=1,
\end{eqnarray*}
where we used the $\mathrm{Ad}_{G}^{\kappa}$-invariance of $\tilde{\chi}_{\lambda}^{\kappa}$
in the last equality. \qed

\subsection{Definitions and basic properties\label{Subsec_DH_Basics}}

Let $(M,\omega,\Phi)$ denote a $G\kappa$-valued tq-Hamiltonian manifold.
The data of $G$ and $\kappa\in\mathrm{Out}(G)$ gives rise to a distinguished
$\mathrm{Ad}_{G}^{\kappa}$-invariant form $\psi_{G}^{\kappa}\in\Omega(G)$
(see eq.(\ref{Eq_Cartan-Dirac_Spinor_Psi_kappa}), and section \ref{Subsec_Dirac_Str_Pure_Spinors}
for the construction). The \textbf{Liouville form} associated to $(M,\omega,\Phi)$
is the $G$-invariant form defined by:
\[
\Lambda_{M}:=\left(e^{\omega}\Phi^{\ast}\psi_{G}^{\kappa}\right)_{[\mathrm{top}]}\in\Omega^{[\mathrm{top}]}(M)^{G}.
\]
In parallel to the symplectic context, we have:
\begin{defn}
The \textbf{Duistermaat-Heckman (DH) measure} associated to $(M,\omega,\Phi)$
is defined as the pushforward:
\[
\mathrm{DH}_{\Phi}:=\Phi_{\ast}|\Lambda_{M}|\in\mathcal{D}'(G\kappa)^{G},
\]
where $|\Lambda_{M}|\in\mathcal{D}'(M)^{G}$ is the induced Liouville
measure, and $\mathcal{D}'(G\kappa)^{G}$ is the space of $\mathrm{Ad}_{G}^{\kappa}$-invariant
distributions on $G$.
\end{defn}

By construction, the singular support of $\mathrm{DH}_{\Phi}$ coincides
with the singular values of the map $\Phi:M\to G\kappa$. Outside
of that set, we think of the DH measure as (locally) given by a Fourier
series:
\begin{equation}
\mathrm{DH}_{\Phi}=\tfrac{1}{\mathrm{vol}_{G}}\Big(\sum_{\lambda\in(\Lambda_{+}^{\ast})^{\kappa}}\langle\mathrm{DH}_{\Phi},\tilde{\chi}_{\lambda}^{\kappa}\rangle\overline{\tilde{\chi}_{\lambda}^{\kappa}}\Big)d\mathrm{vol}_{G},\label{Eq_DH_Fourier_series}
\end{equation}
where $d\mathrm{vol}_{G}$ is the Riemannian measure on $G$. (see
Remark \ref{Rk_Comments_DH_measure} for comments on this point).

For $i=1,2$, let $\kappa_{i}\in\mathrm{Out}(G)$, and consider $G\kappa_{i}$-valued
tq-Hamiltonian $G$-manifolds $(M_{i},\omega_{i},\Phi_{i})$ with
fusion product $(M_{1}\circledast M_{2},\omega_{\mathrm{fus}},\Phi_{\mathrm{fus}})$,
where:
\[
\Phi_{\mathrm{fus}}:M_{1}\circledast M_{2}\to G\kappa_{1}\kappa_{2},(x,y)\mapsto\mathrm{Mult}\left(\Phi_{1}(x)\times\kappa_{1}(\Phi_{2}(y))\right).
\]
 Under fusion, DH measures behave as follows:
\begin{prop}
\label{Prop_Convolution_DH_Meas_Fusion} With the notation above,
one has that:
\[
\mathrm{DH}_{\Phi_{\mathrm{fus}}}=\mathrm{DH}_{\Phi_{1}}\ast(\kappa_{1})_{\ast}\mathrm{DH}_{\Phi_{2}}\in\mathcal{D}'\left(G\kappa_{1}\kappa_{2}\right)^{G}.
\]
For any $\lambda\in(\Lambda_{+}^{\ast})^{\kappa_{1}}\cap(\Lambda_{+}^{\ast})^{\kappa_{2}}$,
the Fourier coefficient of $\mathrm{DH}_{\Phi_{\mathrm{fus}}}$ corresponding
to $\widetilde{\chi}_{\lambda}^{(\kappa_{1}\kappa_{2})}$ is given
by:
\[
\langle\mathrm{DH}_{\Phi_{\mathrm{fus}}},\tilde{\chi}_{\lambda}^{(\kappa_{1}\kappa_{2})}\rangle=(\dim V_{\lambda})^{-1}\langle\mathrm{DH}_{\Phi_{1}},\tilde{\chi}_{\lambda}^{\kappa_{1}}\rangle\langle\mathrm{DH}_{\Phi_{2}},\tilde{\chi}_{\lambda}^{\kappa_{2}}\rangle.
\]
\end{prop}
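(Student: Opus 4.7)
The plan is to treat the two assertions in sequence: first derive the convolution formula for the DH measures by reducing it to a factorization of the Liouville form under fusion, and then obtain the Fourier coefficient identity as a formal consequence of the convolution formula and the product identity (\ref{Eq_Prod_Twining}) for twining characters.

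For the first step, I would show that the Liouville forms satisfy
\[
|\Lambda_{M_{1}\circledast M_{2}}|=|\Lambda_{M_{1}}|\cdot|\Lambda_{M_{2}}|
\]
as $G$-invariant top-degree measures on the product $M_{1}\times M_{2}$. Combined with $\Phi_{\mathrm{fus}}=\mathrm{Mult}\circ(\Phi_{1}\times\kappa_{1}\Phi_{2})$ and the definition of pushforward, this yields the convolution identity by a change of variables: for any test function $f$ on $G$,
\[
\int_{G}f\,d\mathrm{DH}_{\Phi_{\mathrm{fus}}}=\int_{M_{1}\times M_{2}}f(\Phi_{1}(x_{1})\kappa_{1}(\Phi_{2}(x_{2})))|\Lambda_{M_{1}}||\Lambda_{M_{2}}|=\int_{G\times G}f(xy)\,d\mathrm{DH}_{\Phi_{1}}(x)\,d(\kappa_{1})_{\ast}\mathrm{DH}_{\Phi_{2}}(y).
\]
The factorization of the Liouville forms will in turn rest on the compatibility of the pure spinors $\psi_{G}^{\kappa}$ with the multiplication map $\mu:G\kappa_{1}\times G\kappa_{2}\to G\kappa_{1}\kappa_{2}$, $(a,b)\mapsto a\kappa_{1}(b)$, proved via Dirac geometry in section \ref{Subsec_Dirac_Str_Pure_Spinors}: namely, the identity that $\mu^{\ast}\psi_{G}^{\kappa_{1}\kappa_{2}}$ equals $\mathrm{pr}_{1}^{\ast}\psi_{G}^{\kappa_{1}}\wedge\mathrm{pr}_{2}^{\ast}\kappa_{1}^{\ast}\psi_{G}^{\kappa_{2}}$ gauged by the 2-form $\tfrac{1}{2}\,\mathrm{pr}_{1}^{\ast}\theta^{L}\cdot\mathrm{pr}_{2}^{\ast}\kappa_{1}^{\ast}\theta^{R}$. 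Pulling back by $\Phi_{1}\times\Phi_{2}$ and multiplying by $e^{\omega_{\mathrm{fus}}}$ causes the gauge 2-form to cancel exactly against the correction term in $\omega_{\mathrm{fus}}$ given by Proposition \ref{Prop_Def_Fusion}, producing $e^{\omega_{1}}\Phi_{1}^{\ast}\psi_{G}^{\kappa_{1}}\wedge e^{\omega_{2}}\Phi_{2}^{\ast}\kappa_{1}^{\ast}\psi_{G}^{\kappa_{2}}$. Since $\kappa_{1}$ is a Haar-preserving automorphism, taking top-degree components and passing to absolute values yields the claimed factorization.

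For the Fourier coefficient formula, once the convolution identity is established I would integrate equation (\ref{Eq_Prod_Twining}) of Proposition \ref{Prop_Twining_Char_L2_Class_Fns} against $d\mathrm{DH}_{\Phi_{1}}(x)\,d\mathrm{DH}_{\Phi_{2}}(z)$. The right hand side of that identity has an inner $g$-integration which becomes redundant after this pairing, thanks to the $\mathrm{Ad}^{\kappa_{1}}$-invariance of $\mathrm{DH}_{\Phi_{1}}$ (which allows one to eliminate the $g x\kappa_{1}(g^{-1})$ conjugation). The remaining integral is, by the convolution formula just established, exactly $\dim V_{\lambda}$ times $\langle\mathrm{DH}_{\Phi_{\mathrm{fus}}},\tilde{\chi}_{\lambda}^{(\kappa_{1}\kappa_{2})}\rangle$, while the left hand side factors as $\langle\mathrm{DH}_{\Phi_{1}},\tilde{\chi}_{\lambda}^{\kappa_{1}}\rangle\langle\mathrm{DH}_{\Phi_{2}},\tilde{\chi}_{\lambda}^{\kappa_{2}}\rangle$ by Fubini, yielding the desired identity.

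The principal obstacle is the spinor multiplicativity identity for $\mu^{\ast}\psi_{G}^{\kappa_{1}\kappa_{2}}$ underlying the first step, which is the only genuinely geometric input beyond bookkeeping. This is the twisted analogue of the key lemma in \cite{[AMB09]} that $\mathrm{Mult}:G\times G\to G$ is a Dirac morphism for the Cartan-Dirac structure, and it is precisely the calculation postponed to section \ref{Subsec_Dirac_Str_Pure_Spinors}. Once it is available, both assertions of the proposition reduce to the essentially formal manipulations outlined above.
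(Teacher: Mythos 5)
Your proposal follows essentially the same route as the paper's proof: both parts hinge on the factorization of Liouville forms under fusion (which is Corollary~\ref{Cor_Fusion_prod_Liouville} in the text), combined with the product identity~(\ref{Eq_Prod_Twining}) for twining characters, and the Fourier computation is the same integral manipulation read in the opposite direction.

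One imprecision worth flagging: the spinor pullback identity you invoke is stated slightly wrong. Proposition~\ref{Prop_Pullback_Mult_Pure_Spinor} does \emph{not} assert that $\exp(\varsigma^{\kappa_{1}})(\mathrm{Mult}^{\kappa_{1}})^{\ast}\psi_{G}^{\kappa_{1}\kappa_{2}}$ equals the tensor product $\psi_{G}^{\kappa_{1},1}\otimes\psi_{G}^{\kappa_{2},2}$ after gauging by the 2-form; there is an additional factor $\varrho\left(\exp(-\hat{\gamma})\right)$ acting by Clifford contraction, where $\hat{\gamma}$ is a bivector-valued section of $\wedge^{2}\left(E_{G}^{\kappa_{1},1}\oplus E_{G}^{\kappa_{2},2}\right)$. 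Your claimed identity is false at the form level. What rescues the argument (and what the proof of Corollary~\ref{Cor_Fusion_prod_Liouville} actually uses) is that $\exp\left(-\iota(\hat{\gamma}_{M})\right)$ strictly lowers degree, so it acts trivially on the top-degree component, which is what the Liouville form is. Since you explicitly defer this calculation to section~\ref{Subsec_Dirac_Str_Pure_Spinors}, the plan remains sound once the correct statement of the spinor identity is substituted; but you should not claim the gauged pullback \emph{equals} the tensor product, only that they agree after projecting to top degree.
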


\begin{proof}
For the Liouville forms, we have by Corollary \ref{Cor_Fusion_prod_Liouville}
that:
\[
\Lambda_{M_{1}\circledast M_{2}}=\Lambda_{M_{1}}\otimes\Lambda_{M_{2}}\in\Omega^{[\mathrm{top}]}(M_{1}\times M_{2})^{G}.
\]
The definitions of $\Phi_{\mathrm{fus}}$ and $\mathrm{DH}_{\Phi_{\mathrm{fus}}}$
yield:
\[
\mathrm{DH}_{\Phi_{\mathrm{fus}}}=\mathrm{Mult}_{\ast}\left(\mathrm{DH}_{\Phi_{1}}\otimes(\kappa_{1})_{\ast}\mathrm{DH}_{\Phi_{2}}\right),
\]
and since convolution of measures on $G$ is given by pushforward
under the multiplication map $\mathrm{Mult}:G\times G\to G$, we obtain
the first equation of the statement. The second equation follows from
the computation:
\begin{eqnarray*}
\langle\mathrm{DH}_{\Phi_{\mathrm{fus}}},\tilde{\chi}_{\lambda}^{(\kappa_{1}\kappa_{2})}\rangle & = & \int_{G\times G\times G}\tilde{\chi}_{\lambda}^{(\kappa_{1}\kappa_{2})}\left(x\kappa_{1}(y)\right)dgd\mathrm{DH}_{\Phi_{1}}\left(g^{-1}x\kappa_{1}(g)\right)d\mathrm{DH}_{\Phi_{2}}(y)\\
 & = & \int_{G\times G}\left[\int_{G}\tilde{\chi}_{\lambda}^{(\kappa_{1}\kappa_{2})}\left(gx\kappa_{1}(g^{-1})\kappa_{1}(y)\right)dg\right]d\mathrm{DH}_{\Phi_{1}}(x)d\mathrm{DH}_{\Phi_{2}}(y)\\
 & = & (\dim V_{\lambda})^{-1}\langle\mathrm{DH}_{\Phi_{1}},\tilde{\chi}_{\lambda}^{\kappa_{1}}\rangle\langle\mathrm{DH}_{\Phi_{2}},\tilde{\chi}_{\lambda}^{\kappa_{2}}\rangle,
\end{eqnarray*}
where the third equality comes from equation (\ref{Eq_Prod_Twining}).
\end{proof}
\begin{rem}
Suppose that we have a fusion product $M_{1}\circledast\cdots\circledast M_{r}$,
where each $M_{i}$ is a $G\kappa_{i}$-valued tq-Hamiltonian manifold.
The above generalizes \cite[Eq.(30)]{[AMW02]}) to: 
\[
\langle\mathrm{DH}_{\Phi_{\mathrm{fus}}},\tilde{\chi}_{\lambda}^{(\kappa_{1}\cdots\kappa_{r})}\rangle=(\dim V_{\lambda})^{1-r}\prod_{j=1}^{r}\langle\mathrm{DH}_{\Phi_{i}},\tilde{\chi}_{\lambda}^{\kappa_{i}}\rangle,\ \ \lambda\in\bigcap_{i=1}^{r}(\Lambda_{+}^{\ast})^{\kappa_{i}}.
\]
\end{rem}

The next point we address is the relation between the DH measure and
the volumes of reduced spaces $M_{a}=\Phi^{-1}(a)/Z_{a}^{\kappa}$.
Recall that the conjugacy classes of stabilizer subgroups $H\subset G$
induce the the \textit{orbit type stratification} $M=\cup_{(H)}M_{(H)}$
\cite[\S A.1]{[MW99]}, and that there exists a unique open dense
$M_{\mathrm{prin}}\subset M$ called the \textit{principal stratum}.
The latter corresponds to the smallest conjugacy class of stabilizers
$(\Gamma)$, where $\Gamma$ is called a \textit{principal stabilizer}.
\begin{prop}
\label{Prop_Radon-Nikodym}Let $(M,\omega,\Phi)$ be a $G\kappa$-valued
tq-Hamiltonian manifold with finite principal stabilizer $\Gamma$.
Let $a\in G\kappa$ be such that $\Phi^{-1}(a)$ intersects $M_{\mathrm{prin}}$,
and let $\mathcal{C}=\mathrm{Ad}_{G}^{\kappa}(a)$. At $a\in G\kappa$,
the Radon-Nikodym derivative of $\mathrm{DH}_{\Phi}$ with respect
to the Riemannian measure on $G$ is given by:
\[
\frac{d\mathrm{DH}_{\Phi}}{d\mathrm{vol}_{G}}(a)=\frac{1}{|\Gamma|}\frac{\mathrm{vol}_{G}}{\mathrm{Vol}(\mathcal{C})}\mathrm{Vol}(M_{a}).
\]
\end{prop}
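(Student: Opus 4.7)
The plan is to apply the shifting trick. Set $N = M \circledast \mathcal{C}^{-}$, with $\mathcal{C} = \mathrm{Ad}_{G}^{\kappa}(a)$ and $\mathcal{C}^{-}$ its tq-Hamiltonian inverse from Proposition \ref{Prop_Inversion}. Since $\kappa\cdot\kappa^{-1} = 1$, the space $N$ is an \emph{untwisted} $G$-valued q-Hamiltonian $G$-manifold, and by Proposition \ref{Prop_Reduction} its reduction at $e \in G$ is precisely $M_{a}$. For a point $(x, a) \in \Phi_{N}^{-1}(e)$ with $x$ in the principal stratum of $M$, the stabilizer for the diagonal action is $G_{x} \cap Z_{a}^{\kappa} = G_{x} = \Gamma$ (using $\Gamma \subseteq Z_{a}^{\kappa}$ from equivariance), so the principal stabilizer of $N$ is again $\Gamma$.

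Next, from Proposition \ref{Prop_Convolution_DH_Meas_Fusion} one has $\mathrm{DH}_{\Phi_{N}} = \mathrm{DH}_{\Phi} \ast \kappa_{\ast} \mathrm{DH}_{\Phi_{\mathcal{C}^{-}}}$. Writing $\rho = d\mathrm{DH}/d\mathrm{vol}_{G}$ for the respective Radon-Nikodym densities and using the bi-invariance of the Riemannian measure on $G$ to change variables in the convolution, I would obtain
\[
\rho_{N}(e) = \int_{\mathcal{C}} \rho_{\Phi}(y)\, d|\Lambda_{\mathcal{C}}|(y).
\]
Both $\mathrm{DH}_{\Phi}$ and $\mathrm{vol}_{G}$ are $\mathrm{Ad}_{G}^{\kappa}$-invariant, so $\rho_{\Phi}$ is constant along the twisted conjugacy class $\mathcal{C}$. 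Hence the right-hand side equals $\rho_{\Phi}(a)\cdot\mathrm{Vol}(\mathcal{C})$, where $\mathrm{Vol}(\mathcal{C}) = \int_{\mathcal{C}} |\Lambda_{\mathcal{C}}|$ is the Liouville volume of the conjugacy class.

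It remains to compute $\rho_{N}(e)$ independently. One applies the untwisted analog of the identity to $N$ at $e$: identifying a neighborhood of $e \in G$ with one of $0 \in \mathfrak{g}$ via $\exp$, the Liouville form $\Lambda_{N}$ pulls back to the ordinary symplectic Liouville form, modulo the normalization carried by the Cartan-Dirac pure spinor $\psi_{G}^{1}$. The classical Duistermaat-Heckman density formula then yields $\rho_{N}(e) = \mathrm{vol}_{G} \cdot \mathrm{Vol}(M_{a})/|\Gamma|$, the factor $\mathrm{vol}_{G}$ arising from the mass $\int_{G} \psi_{G}^{1} = \mathrm{vol}_{G}$. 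Equating the two expressions for $\rho_{N}(e)$ rearranges to the claimed identity. The main obstacle is precisely this normalization step: tracking the $\mathrm{vol}_{G}$ factor introduced by using $\psi_{G}^{\kappa}$ in the Liouville form rather than the naive symplectic volume. An alternative route that bypasses the shifting trick entirely is a direct argument via the $G$-equivariant local normal form $M \cong G \times_{\Gamma} (V \times S_{\Gamma})$ near a point of $\Phi^{-1}(a) \cap M_{\mathrm{prin}}$, with $V$ a slice to $\mathcal{C}$ in $G$ and $S_{\Gamma}$ a symplectic slice carrying a $\Gamma$-action, from which $\rho_{\Phi}(a)$ may be read off in a single Fubini step after decomposing $\Lambda_{M}$ into transverse and fiber factors.
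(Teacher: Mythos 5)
Your overall strategy coincides with the paper's: pass to the untwisted q-Hamiltonian manifold $N = M \circledast \mathcal{C}^{-}$ via the shifting trick, observe that $M_{a} = N/\!/G$ and that $N$ has the same principal stabilizer $\Gamma$, evaluate $d\mathrm{DH}_{\Phi_{N}}/d\mathrm{vol}_{G}$ at $e$, and then use the convolution identity $\mathrm{DH}_{\Phi_{N}} = \mathrm{DH}_{\Phi}\ast\mathrm{Inv}_{\ast}\mathrm{DH}_{\mathcal{C}}$ together with the $\mathrm{Ad}_{G}^{\kappa}$-invariance of $\mathrm{DH}_{\Phi}$ and $d\mathrm{vol}_{G}$ to write $\rho_{N}(e)=\mathrm{Vol}(\mathcal{C})\,\rho_{\Phi}(a)$. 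Your reduction of the convolution to $\rho_{\Phi}(a)\mathrm{Vol}(\mathcal{C})$ is exactly the paper's last chain of equalities.

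Where you diverge is the step you yourself flag as ``the main obstacle'': you try to rederive $\rho_{N}(e)=\tfrac{1}{|\Gamma|}\mathrm{vol}_{G}\,\mathrm{Vol}(M_{a})$ by a heuristic about $\int_{G}\psi_{G}^{1}=\mathrm{vol}_{G}$. This heuristic is not quite an argument: $\psi_{G}^{1}$ is an inhomogeneous form, and the $\mathrm{vol}_{G}$ factor in the untwisted density formula arises from a cross-section / Fubini argument, not merely from integrating the spinor over $G$. The paper simply cites the existing result \cite[Thm.4.1]{[AMW02]} for untwisted q-Hamiltonian $G$-spaces, which yields exactly $\tfrac{d\mathrm{DH}_{\Psi}}{d\mathrm{vol}_{G}}(e)=\tfrac{1}{|\Gamma|}\mathrm{vol}_{G}\,\mathrm{Vol}(M_{a})$; since $N$ is genuinely untwisted, no new work is required. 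Replacing your sketch with a direct appeal to that theorem closes the gap and recovers the paper's proof verbatim. Your alternative via the $G$-equivariant local normal form is a plausible independent route, but as written it is only an outline, and it would essentially re-prove the cited theorem rather than bypass it.
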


\begin{proof}
We have the untwisted q-Hamiltonian manifold $Y=M\circledast\mathcal{C}^{-}$
with moment map $\Psi(x,h)=\Phi(x)h^{-1}$. The symplectic quotient
$M_{a}=\Phi^{-1}(\mathcal{C})/G$ coincides with $Y/\!/G=\Psi^{-1}(e)/G$,
and the principal stabilizers of both $\Psi^{-1}(e)$ and $\Phi^{-1}(a)$
are conjugate to $\Gamma$. By \cite[Thm.4.1]{[AMW02]} then:
\[
\frac{d\mathrm{DH}_{\Psi}}{d\mathrm{vol}_{G}}(e)=\tfrac{1}{|\Gamma|}\mathrm{vol}_{G}\mathrm{Vol}(M_{a}).
\]
The RHS of this equation is determined from the convolution of DH
measures under fusion:
\[
\frac{d\mathrm{DH}_{\Psi}}{d\mathrm{vol}_{G}}(e)=\langle\mathrm{DH}_{\Phi}\ast\mathrm{Inv}_{\ast}\mathrm{DH}_{\mathcal{C}},\delta_{e}\rangle=\mathrm{Vol}(\mathcal{C})\langle\mathrm{DH}_{\Phi},\delta_{a}\rangle=\mathrm{Vol}(\mathcal{C})\frac{d\mathrm{DH}_{\Phi}}{d\mathrm{vol}_{G}}(a).\qedhere
\]
\end{proof}
\begin{rem}
\label{Rk_Comments_DH_measure}We conclude this subsection with some
comments.

\noindent\begin{itemize}

\item[(a)] The validity of the Fourier series in eq.(\ref{Eq_DH_Fourier_series})
for $\mathrm{DH}_{\Phi}$ follows from the discussion in the last
paragraphs of \cite[\S 4.1]{[AMW02]}, which is based on the approach
in \cite[\S 3]{[Liu96]}. Consider the Laplace-Beltrami operator $\Delta$
on the disconnected group $G\rtimes\langle\kappa\rangle$. For $t>0$,
apply the smoothing operator to $\mathrm{DH}_{\Phi}$ and set:
\[
\tfrac{d\mathrm{DH}_{\Phi}}{d\mathrm{vol}_{G}}\equiv\tfrac{1}{|\kappa|\mathrm{vol}_{G}}\lim_{t\to0^{+}}\sum_{[V]\in\mathrm{Irr}(G\rtimes\langle\kappa\rangle)}\langle\mathrm{DH}_{\Phi},e^{-t\Delta}\chi_{V}\rangle\overline{\chi_{V}}.
\]
Since $\mathrm{DH}_{\Phi}$ is supported on the component $G\kappa$,
we have by Remark \ref{Rk_Twining_Char_Discon_Gp} that $\langle\mathrm{DH}_{\Phi},\chi_{V}\rangle$
vanishes for $V$ corresponding to $\lambda\in\Lambda_{+}^{\ast}\smallsetminus(\Lambda_{+}^{\ast})^{\kappa}$
, and $\langle\mathrm{DH}_{\Phi},\chi_{V}\rangle=\exp(\frac{2\pi\mathsf{i}}{|\kappa|}j)\langle\mathrm{DH}_{\Phi},\tilde{\chi}_{\lambda}^{\kappa}\rangle$
for $V$ corresponding to $\lambda\in(\Lambda_{+}^{\ast})^{\kappa}$,
$j=1,\cdots,|\kappa|$. The previous equation then becomes:
\[
\tfrac{d\mathrm{DH}_{\Phi}}{d\mathrm{vol}_{G}}\equiv\tfrac{1}{\mathrm{vol}_{G}}\lim_{t\to0^{+}}\sum_{\lambda\in(\Lambda_{+}^{\ast})^{\kappa}}e^{-tp(\lambda)}\langle\mathrm{DH}_{\Phi},\tilde{\chi}_{\lambda}^{\kappa}\rangle\overline{\tilde{\chi}_{\lambda}^{\kappa}},
\]
where $p(\lambda)=|\!|\lambda+\rho|\!|^{2}-|\!|\rho|\!|^{2}$ is the
eigenvalue of $\Delta$ on the character $\chi_{\lambda}$. This regularization
gives a rigorous meaning to equation (\ref{Eq_DH_Fourier_series}),
and is necessary when that sum diverges.

\item[(b)] The conditions on the principal stratum and stabilizer
in Proposition \ref{Prop_Radon-Nikodym} are there to guarantee that
the formula holds for singular values of the moment map (see \cite[\S A.1]{[MW99]}).
If we are merely interested in a regular value $a\in G\kappa$ of
$\Phi$, then $\mathrm{Im}(\Phi^{\ast}\theta^{R})_{x}=\mathfrak{g}_{x}^{\perp}=\mathfrak{g}$
for all $x\in\Phi^{-1}(a)$ by \cite[Prop.3.9]{[Mein17]}, and the
stabilizers $G_{x}\subset Z_{a}^{\kappa}$ are thus finite. The number
$|\Gamma|$ in the formula of Proposition \ref{Prop_Radon-Nikodym}
is then replaced by the cardinality of the subgroup $\cap_{x\in\Phi^{-1}(a)}G_{x}\subset Z_{a}^{\kappa}$.

\end{itemize}
\end{rem}

\subsection{Localization and Fourier coefficients\label{Subsec_DH_Localization}}

The main result of this work is the following:
\begin{thm}
\label{Thm_Twisted_DH}Let $G$ be a compact 1-connected simple Lie
group with $\kappa\in\mathrm{Out}(G)$, and let $(M,\omega,\Phi)$
be a $G\kappa$-valued tq-Hamiltonian manifold. For a $\kappa$-fixed
dominant weight $\lambda\in(\Lambda_{+}^{\ast})^{\kappa}$, let $\xi_{\lambda}=2\pi iB^{\sharp}(\rho+\lambda)\in\mathfrak{t}^{\kappa}$.The
Fourier coefficient of $\mathrm{DH}_{\Phi}$ corresponding to $\lambda$
is given by the localization formula:
\begin{equation}
\frac{\langle\mathrm{DH}_{\Phi},\tilde{\chi}_{\lambda}^{\kappa}\rangle}{\dim V_{\lambda}}=\sum_{Z\subseteq(\xi_{\lambda})_{M}^{-1}(0)}\int_{Z}\frac{(\Phi_{Z})^{\lambda+\rho}e^{\omega_{Z}}\Phi_{Z}^{\ast}(\phi_{\kappa})}{\mathrm{Eul}(\mathcal{N}_{Z},\xi_{\lambda})},\label{Eq_Fourier_Coeff_Twisted_DH}
\end{equation}
where: \begin{itemize}

\item $Z$ is a connected component of the vanishing set $(\xi_{\lambda})_{M}^{-1}(0)\subseteq\Phi^{-1}(T)$;

\item $\omega_{Z}$ and $\Phi_{Z}$ are the pullbacks of $\omega$
and $\Phi$ to $Z\subset\Phi^{-1}(T)$;

\item $\mathrm{Eul}(\mathcal{N}_{Z},\cdot)$ is the $T$-equivariant
Euler form of the normal bundle $\mathcal{N}_{Z}=TM|_{Z}/TZ$;

\item The function $(\Phi_{Z})^{\lambda+\rho}$ designates $(\Phi_{Z}(z))^{\lambda+\rho}=e^{2\pi i\langle\lambda+\rho,\zeta\rangle}$,
for $z\in Z$ and $\zeta\in\mathfrak{t}$ such that $\Phi_{Z}(z)=e^{\zeta}$.

\item The differential form $\phi_{\kappa}\in\Omega(G)$ is given
by:
\[
\phi_{\kappa}=\begin{cases}
2^{-\frac{1}{2}\dim T_{\kappa}}\exp(\varpi)\wedge d\mathrm{vol}_{T_{\kappa}}, & |\kappa|=2;\\
2^{-1}\exp\left(\varpi+\tfrac{\sqrt{3}}{2}d\mathrm{vol}_{T_{\kappa}}\right), & |\kappa|=3;
\end{cases}
\]
with $d\mathrm{vol}_{T_{\kappa}}$ denoting the Riemannian volume
form of the subtorus $T_{\kappa}\subseteq T$, and $\varpi\in\Omega^{2}(G)_{\mathbb{C}}$
the form introduced in eq. (\ref{Eq_varpi_Gauss_cell}).

\end{itemize}
\end{thm}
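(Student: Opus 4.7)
The plan is to reduce the Fourier coefficient to an integral of the spherical harmonic $\Delta_\lambda\circ\Phi$ against the Liouville form $\Lambda_M$, then use the geometric structure of the twisted Cartan--Dirac pure spinor $\psi_G^\kappa$ to reduce this to an integral over $\Phi^{-1}(T)$, and finally apply Berline--Vergne equivariant localization for the action of the one-parameter subgroup generated by $\xi_\lambda\in\mathfrak{t}^\kappa$.

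Step 1 (reduction to spherical harmonics). By Proposition \ref{Prop_Twining_Char_L2_Class_Fns}(4), $\tilde{\chi}_\lambda^\kappa(g)=\dim V_\lambda\int_G\Delta_\lambda(h g\kappa(h^{-1}))\,dh$. Pairing with $\mathrm{DH}_\Phi$, unfolding via $\Lambda_M$, and exploiting the $G$-invariance of $\Lambda_M$ through the substitution $x\mapsto h^{-1}\cdot x$ (under which $h\Phi(x)\kappa(h^{-1})=\Phi(h\cdot x)$), the inner integral over $G$ becomes constant and yields
\[
\frac{\langle\mathrm{DH}_\Phi,\tilde{\chi}_\lambda^\kappa\rangle}{\dim V_\lambda}\;=\;\int_M\Phi^*\Delta_\lambda\cdot\Lambda_M.
\]

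Step 2 (reduction to the preimage of $T$). Here one uses the explicit description of $\psi_G^\kappa$ (Section \ref{Subsec_Dirac_Str_Pure_Spinors}) as a pure spinor for the twisted Cartan--Dirac structure $E_G^\kappa$ on $G\kappa$. Away from a codimension-one subvariety, this spinor is supported on a Gauss-cell neighborhood of $T\kappa\subset G\kappa$, on which it factorizes into a piece carrying the transverse (unipotent) directions and a residual piece involving $\varpi$ and $d\mathrm{vol}_{T_\kappa}$. Combining this local description with the fact that $\Delta_\lambda|_T=t^\lambda$, one obtains, after integrating out the Gauss-cell directions and absorbing the associated Jacobian (producing the Weyl shift by $\rho$),
\[
\int_M\Phi^*\Delta_\lambda\cdot\Lambda_M\;=\;\int_{\Phi^{-1}(T)}\Phi^{\lambda+\rho}\,e^{\omega}\wedge\Phi^*\phi_\kappa,
\]
with $\phi_\kappa$ as in the statement.

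Step 3 (equivariant localization). On $\Phi^{-1}(T)$, the form $e^{\omega}\wedge\Phi^*\phi_\kappa$ admits an equivariantly closed extension with respect to the $T^\kappa$-action: the combination $\omega+2\pi\mathsf{i}\langle\lambda+\rho,\log\Phi\rangle$ is equivariantly closed with equivariant parameter $\xi_\lambda$ by the moment map condition (Definition \ref{Def_tq-Ham_Mfld}(iii)), and $\Phi^*\phi_\kappa$ is $T^\kappa$-invariant. Applying the Berline--Vergne localization theorem to the generating vector field $(\xi_\lambda)_M$ reduces the integral to a sum over the connected components $Z$ of $(\xi_\lambda)_M^{-1}(0)$, each with the equivariant Euler class $\mathrm{Eul}(\mathcal{N}_Z,\xi_\lambda)$ in the denominator, giving the right-hand side of \eqref{Eq_Fourier_Coeff_Twisted_DH}.

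The main obstacle is Step 2. The reduction from $M$ to $\Phi^{-1}(T)$, and in particular the identification of the precise form $\phi_\kappa$ with its normalization constants ($2^{-\dim T_\kappa/2}$ for $|\kappa|=2$, and the constants $2^{-1}$ and $\tfrac{\sqrt{3}}{2}$ for $|\kappa|=3$), will require a careful analysis of the twisted Cartan--Dirac spinor on neighborhoods of $T\kappa\subset G\kappa$. These constants should encode the finite group $T^\kappa\cap T_\kappa$ of Proposition 1.2.1, and they should emerge from a Gaussian-type integration over the complement of $\mathfrak{t}^\kappa$ inside $\mathfrak{t}$ once the Dirac-geometric tools of Section \ref{App_Dirac_Geo} are invoked. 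Once Step 2 is justified, Step 3 is a routine application of the fixed-point formula.
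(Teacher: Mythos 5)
Your Step 1 is correct and matches the paper's argument: Proposition~\ref{Prop_Twining_Char_L2_Class_Fns}-(4) and $G$-invariance of $\Lambda_M$ give $\langle\mathrm{DH}_\Phi,\tilde\chi_\lambda^\kappa\rangle/\dim V_\lambda=\int_M\Phi^*\Delta_\lambda\cdot\Lambda_M$. However, Step~2 as written has a genuine gap, and your own approach would not carry through.

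The issue is that the Cartan--Dirac complement spinor $\psi_G^\kappa\in\Omega(G)$ is \emph{not} supported near $T\kappa$; it is a globally defined inhomogeneous form that is a pure spinor almost everywhere, so there is no ``Gauss-cell neighborhood'' on which it concentrates and no meaningful way to ``integrate out the Gauss-cell directions.'' Moreover, the intermediate integral $\int_{\Phi^{-1}(T)}\Phi^{\lambda+\rho}e^\omega\wedge\Phi^*\phi_\kappa$ is ill-defined in general: $\Phi^{-1}(T)$ need not be a smooth manifold, and even where it is, applying Berline--Vergne there would produce the Euler class of $\mathcal{N}_Z$ relative to $\Phi^{-1}(T)$, not $\mathcal{N}_Z=TM|_Z/TZ$ relative to $M$ as in equation~\eqref{Eq_Fourier_Coeff_Twisted_DH}.

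The paper avoids the intermediate reduction entirely by introducing the \emph{Gauss--Dirac} spinor $\hat\psi_G^\kappa$ of equation~\eqref{Eq_Gauss-Dirac_Spinor} (not $\psi_G^\kappa$), forming the equivariant form $\hat\Lambda_M^\lambda=e^\omega\Phi^*(\Delta_\lambda\hat\psi_G^\kappa)$ on all of $M$, and applying Berline--Vergne once, on $M$ itself. Three facts about $\hat\psi_G^\kappa$ do all the work. First, Proposition~\ref{Prop_TTG_pure_spinors_Gauss}-(2) gives $\hat\psi_G^\kappa=\varrho(\exp(-\mathsf{e}^\kappa(\mathfrak{r})))\cdot\psi_G^\kappa$, so the two spinors have the same top-degree part and $(\hat\Lambda_M^\lambda)_{[\mathrm{top}]}=\Phi^*(\Delta_\lambda)\Lambda_M$, recovering your Step~1. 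Second, Proposition~\ref{Prop_TTG_pure_spinors_Gauss}-(3) is the differential equation $(d+\eta)(\Delta_\lambda\hat\psi_G^\kappa)=\varrho(\mathsf{e}^\kappa(\xi_\lambda))\cdot(\Delta_\lambda\hat\psi_G^\kappa)$; pulling back by the Dirac morphism $(\Phi,\omega)$ shows $(d-\iota_{(\xi_\lambda)_M})\hat\Lambda_M^\lambda=0$, the equivariant closedness your Step~3 needs but which your construction on $\Phi^{-1}(T)$ does not supply. Third, Proposition~\ref{Prop_Expressions_Gauss-Dirac_Spinor} gives $(\hat\psi_{G_\mathbb{C}}^\kappa)_t=t^\rho(\phi_\kappa)_t$ on $T$, and since the fixed-point set $(\xi_\lambda)_M^{-1}(0)$ already lies in $\Phi^{-1}(T)$ (this is the paper's Step~1, using regularity of $\xi_\lambda\in\mathfrak{t}_\mathbb{C}$), restricting $\hat\Lambda_M^\lambda$ to $Z$ produces exactly $(\Phi_Z)^{\lambda+\rho}e^{\omega_Z}\Phi_Z^*\phi_\kappa$ and hence the normalization constants you flagged as the obstacle. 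In short: the Gauss--Dirac spinor is not an optional device but the mechanism that simultaneously makes the integrand equivariantly closed on $M$ and explicit on $T$, so that Berline--Vergne on $M$ delivers the whole formula in one step.
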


The proof below is heavily based on the material of section \ref{Subsec_Dirac_Str_Pure_Spinors},
and can therefore be skipped on a first reading.
\begin{proof}
\textbf{Step 1:} The connected components $Z\subseteq(\xi_{\lambda})_{M}^{-1}(0)$
are tq-Hamiltonian $T$-spaces. For any $z\in Z$, $\xi_{\lambda}\in(\mathfrak{t}^{\kappa})_{\mathbb{C}}$
implies that:
\[
\left(\kappa(\xi_{\lambda})^{L}-\xi_{\lambda}^{R}\right)_{\Phi(z)}=\left((\mathrm{Ad}_{\Phi(z)}-1)\xi_{\lambda}\right)_{\Phi(z)}^{R}=0,
\]
and since $\xi_{\lambda}$ is a regular element of $\mathfrak{t}_{\mathbb{C}}$
under $\mathrm{Ad}_{G}$, we have that $\Phi(z)\in T$. Thus, $Z\subseteq\Phi^{-1}(T)$,
and the restriction of the $G$ action to the action of $T$ preserves
these components.

\noindent \textbf{Step 2:} Consider the equivariant form:
\[
\widehat{\Lambda}_{M}^{\lambda}:=e^{\omega}\Phi^{\ast}(\Delta_{\lambda}\widehat{\psi}_{G}^{\kappa})\in\Omega_{G}(M),
\]
where $\widehat{\psi}_{G}^{\kappa}\in\Omega(G)$ is the restriction
of the Gauss-Dirac spinor of eq. (\ref{Eq_Gauss-Dirac_Spinor}) to
$G\subset G_{\mathbb{C}}$, and $\Delta_{\lambda}$ is the spherical
harmonic corresponding to $\lambda\in(\Lambda_{+}^{\ast})^{\kappa}$.
The form $\hat{\Lambda}_{M}^{\lambda}$ is equivariantly closed at
$\xi_{\lambda}\in(\mathfrak{t}^{\kappa})_{\mathbb{C}}$:
\[
\left(d-\iota_{(\xi_{\lambda})_{M}}\right)\hat{\Lambda}_{M}^{\lambda}=0.
\]
Proposition \ref{Prop_TTG_pure_spinors_Gauss}-(3) states that $\widehat{\psi}_{G}^{\kappa}$
satisfies the differential equation:
\[
(d+\eta)\left(\Delta_{\lambda}\hat{\psi}_{G}^{\kappa}\right)=\varrho\left(\mathsf{e}^{\kappa}(\xi_{\lambda})\right)\cdot\left(\Delta_{\lambda}\hat{\psi}_{G}^{\kappa}\right),
\]
and since $(\Phi,\omega)$ is a Dirac morphism, taking the pullback
under $\Phi:M\to G\kappa$ of this equation and then multiplying by
$e^{\omega}$ yields $d\hat{\Lambda}_{M}^{\lambda}=\iota_{(\xi_{\lambda})_{M}}\hat{\Lambda}_{M}^{\lambda}$.

Now, by the Berline-Vergne localization formula \cite[Thm.7.13]{[BGV]}:
\begin{equation}
\int_{M}(\hat{\Lambda}_{M}^{\lambda})_{[\mathrm{top}]}=\sum_{Z\subset(\xi_{\lambda})_{M}^{-1}(0)}\int_{Z}\frac{\iota_{Z}^{\ast}\hat{\Lambda}_{M}^{\lambda}(\xi_{\lambda})}{\mathrm{Eul}(\mathcal{N}_{Z},\xi_{\lambda})}.\label{Eq_Localization_Eqn}
\end{equation}

\noindent \textbf{Step 3:} The LHS of equation (\ref{Eq_Localization_Eqn})
is given by:
\[
\int_{M}(\hat{\Lambda}_{M}^{\lambda})_{[\mathrm{top}]}=(\dim V_{\lambda})^{-1}\langle\mathrm{DH}_{\Phi},\tilde{\chi}_{\lambda}^{\kappa}\rangle.
\]
We have $\hat{\Lambda}_{M}^{\lambda}=\Phi^{\ast}(\Delta_{\lambda})e^{-\iota_{(\xi_{\lambda})_{M}}}(e^{\omega}\Phi^{\ast}\psi_{G}^{\kappa})$
by \ref{Prop_TTG_pure_spinors_Gauss}-(2), and since $\exp(-\iota_{(\xi_{\lambda})_{M}})$
does not modify top degree parts, the integrand in the RHS is fact
$\Phi^{\ast}(\Delta_{\lambda})\Lambda_{M}$. Next, by Proposition
\ref{Prop_Twining_Char_L2_Class_Fns}-(4):
\begin{eqnarray*}
\int_{M}(\hat{\Lambda}_{M}^{\lambda})_{[\mathrm{top}]} & = & \int_{M}\Phi^{\ast}(\Delta_{\lambda})\Lambda_{M}=\int_{G}\Delta_{\lambda}(x)d\mathrm{DH}_{\Phi}(x)\\
 & = & \int_{G}\left(\int_{G}\Delta_{\lambda}\left(gx\kappa_{1}(g^{-1})\right)dg\right)d\mathrm{DH}_{\Phi}(x)\\
 & = & (\dim V_{\lambda})^{-1}\int_{G}\tilde{\chi}_{\lambda}^{\kappa}(x)d\mathrm{DH}_{\Phi}(x)=(\dim V_{\lambda})^{-1}\langle\mathrm{DH}_{\Phi},\tilde{\chi}_{\lambda}^{\kappa}\rangle.
\end{eqnarray*}

\noindent \textbf{Step 4:} The integrals in the RHS of equation (\ref{Eq_Localization_Eqn})
reduce to:
\[
\int_{Z}\frac{\iota_{Z}^{\ast}\hat{\Lambda}_{M}^{\lambda}(\xi_{\lambda})}{\mathrm{Eul}(\mathcal{N}_{Z},\xi_{\lambda})}=\int_{Z}\frac{(\Phi_{Z})^{\lambda+\rho}e^{\omega}\Phi_{Z}^{\ast}(\phi_{\kappa})}{\mathrm{Eul}(\mathcal{N}_{Z},\xi_{\lambda})}.
\]
By proposition \ref{Prop_Expressions_Gauss-Dirac_Spinor}, we have
that $(\widehat{\psi}_{G}^{\kappa})_{t}=t^{\rho}(\phi_{\kappa})_{t}$
for all $t\in T$, where $\phi_{\kappa}$ is the form in the statement.
Since $\Delta_{\lambda}(t)=t^{\lambda}$ for all $t\in T$, and since
$\Phi_{Z}$ is $T$-valued, we have: 
\[
\iota_{Z}^{\ast}\hat{\Lambda}_{M}^{\lambda}(\xi_{\lambda})=e^{\omega_{Z}}\Phi_{Z}^{\ast}(\Delta_{\lambda}\widehat{\psi}_{G}^{\kappa})=(\Phi_{Z})^{\lambda+\rho}e^{\omega}\Phi_{Z}^{\ast}(\phi_{\kappa}),
\]
which completes the proof of the theorem.
\end{proof}
In comparison with the Alekseev-Meinrenken-Woodward localization formula
\cite[Thm.5.2]{[AMW00]}, the main difference is the presence of the
form $\phi_{\kappa}\in\Omega(G)$ in equation (\ref{Eq_Fourier_Coeff_Twisted_DH}),
which equals $1$ for $\kappa=1$. A second notable difference is
the structure of the connected components of the vanishing set $(\xi_{\lambda})_{M}^{-1}(0)$,
as the next example shows.
\begin{example}
For $\kappa\in\mathrm{Out}(G)$, let $a\in T^{\kappa}$ be an element
with stabilizer $T^{\kappa}$ under $\mathrm{Ad}_{G}^{\kappa}$, and
consider the twisted conjugacy class $\mathcal{C}=\mathrm{Ad}_{G}^{\kappa}(a)$
with its tq-Hamiltonian structure of Example \ref{Eq_Form_TConj_Class}.
Applying the localization formula (\ref{Eq_Fourier_Coeff_Twisted_DH})
to $\lambda=0$ gives the Liouville (or DH) volume of the twisted
conjugacy class $\mathcal{C}$. Here, the vanishing set of $\xi_{0}=2\pi\mathsf{i}B^{\sharp}(\rho)$
is precisely $\mathcal{C}\cap T$. 

In the case where $\kappa=1$, we have $\mathcal{C}\cap T=\{w\cdot a\}_{w\in W}$.
For nontrivial $\kappa\in\mathrm{Out}(G)$ with $|\kappa|=2$, each
connected component of $\mathcal{C}\cap T$ is a $T_{\kappa}$-orbit
$Z_{w}=T_{\kappa}\cdot(w\cdot a)$ for some $w\in W^{\kappa}$, with
trivial normal bundle $\mathcal{N}_{Z_{w}}\to Z_{w}$ such that \cite[\S 7.1]{[BGV]}:
\[
\mathrm{Eul}(\mathcal{N}_{Z_{w}},2\pi\mathsf{i}\rho)^{-1}=(-1)^{\dim\mathfrak{t}^{\perp}}(-1)^{|w|}\frac{\mathrm{vol}_{G}}{\mathrm{vol}_{T}}.
\]
Using the properties of the orbit root system \cite[\S 2.3]{[Zer18a]}
associated to $(\mathfrak{R},\kappa)$ and \cite[Lem.3.7]{[Zer18a]}:
\begin{align*}
\mathrm{Vol}(\mathcal{C}) & =\sum_{w\in W^{\kappa}}\int_{Z_{w}}\frac{(\Phi_{Z_{w}})^{\rho}e^{\omega_{Z_{w}}}\Phi_{Z_{w}}^{\ast}\left(\phi_{\kappa}\right)}{\mathrm{Eul}\left(\mathcal{N}_{Z_{w}},2\pi\mathsf{i}\rho\right)}=\sum_{w\in W^{\kappa}}\frac{a^{w^{-1}\cdot\rho}}{\mathrm{Eul}\left(\mathcal{N}_{Z_{w}},2\pi\mathsf{i}\rho\right)}\int_{Z_{w}}e^{\omega_{Z_{w}}}\Phi_{Z_{w}}^{\ast}\left(\phi_{\kappa}\right)\\
 & =|T^{\kappa}\cap T_{\kappa}|^{\frac{1}{2}}\Big(\sum_{w\in W^{\kappa}}(-1)^{|w|}a^{w^{-1}\cdot\rho}\Big)\frac{\mathrm{vol}_{G}}{\mathrm{vol}_{T^{\kappa}}}=\big|\mathrm{det}_{(\mathfrak{t}^{\kappa})^{\perp}}(\mathrm{Ad}_{a}\kappa-1)\big|^{\frac{1}{2}}\frac{\mathrm{vol}_{G}}{\mathrm{vol}_{T^{\kappa}}},
\end{align*}
where the term $(\mathrm{vol}_{T^{\kappa}})^{-1}$ comes from $T=T^{\kappa}\cdot T_{\kappa}$
and the fact that $\phi_{\kappa}$ involves $d\mathrm{vol}_{T_{\kappa}}$.
Thus, the Liouville volume of $\mathcal{C}$ coincides with its Riemannian
volume \cite[Prop.4.2]{[AMW02]}.
\end{example}

\subsection{Application to moduli spaces\label{SubSec_Application_TMS}}

By decomposing a connected surface $\Sigma_{h}^{b}$ into $h$ 1-holed
tori and $(b-1)$ cylinders glued along pants $\Sigma_{0}^{3}$, the
moduli space corresponding to a twist $\sigma\in\mathrm{Hom}\left(\Pi,\mathrm{Out}(G)\right)$
can be realized as a fusion product:
\[
M_{\sigma}(\Sigma_{h}^{b},G)=\left(\mathbb{D}_{\varphi_{1}}(G)\circledast\cdots\circledast\mathbb{D}_{\varphi_{h}}(G)\right)\circledast\left(D_{\sigma_{1}}(G)\circledast\cdots\circledast D_{\sigma_{b-1}}(G)\right),
\]
where the doubles $D_{\sigma_{i}}(G)=M_{\sigma_{i}}(\Sigma_{0}^{2},G)$
and $\mathbb{D}_{\varphi_{i}}(G)=M_{\varphi_{i}}(\Sigma_{1}^{1},G)$
are as in Examples \ref{Ex_Moduli_Double} and \ref{Ex_Moduli_Fused_Double}
respectively, for appropriate twists $\sigma_{i}\in\mathrm{Hom}\left(\Pi_{1}(\Sigma_{0}^{2},\{p_{1},p_{2}\}),\mathrm{Out}(G)\right)$
and $\varphi_{i}\in\mathrm{Hom}\left(\Pi_{1}(\Sigma_{1}^{1},\{p\}),\mathrm{Out}(G)\right)$.
From Proposition \ref{Prop_Convolution_DH_Meas_Fusion}, the DH measure
$\mathrm{DH}_{\Phi_{\sigma}}$ associated to $M_{\sigma}(\Sigma_{h}^{b},G)$
is a convolution product of measures $\mathrm{DH}_{D_{\sigma_{i}}(G)}$
and $\mathrm{DH}_{\mathbb{D}_{\varphi_{i}}(G)}$, and it therefore
suffices to determine the Fourier coefficients of the latter to obtain
those of $\mathrm{DH}_{\Phi_{\sigma}}$.

We first establish the following twisted counterparts to \cite[Prop.4.4]{[AMW02]}
and \cite[\S 2.6]{[AMW02]}:
\begin{lem}
For $G$ compact 1-connected and simple, let $\tau,\kappa\in\mathrm{Out}(G)$.\begin{enumerate}

\item In the setup of Example \ref{Ex_Gkappa}, the Liouville measure
of $G\kappa$ coincides with its Riemannian measure as a symmetric
space, that is:
\[
\mathrm{Vol}(G\kappa)=\mathrm{vol}_{G}.
\]

\item The twisted double can be realized as the fusion product:
\[
D_{\sigma}(G)=G\tau\circledast G\kappa.
\]

\end{enumerate}
\end{lem}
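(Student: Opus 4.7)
My plan is to treat the two statements separately, each as a direct structural check against the material reviewed above.

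For (1), the crucial ingredient is that $\omega_{G\kappa}\equiv 0$ by Example \ref{Ex_Gkappa}, which collapses the Liouville form to the top-degree component of $\Phi^{\ast}\psi$, where $\psi$ is the pure spinor for the $\nu$-twisted Cartan-Dirac structure on $(K\times K)\nu$. By the twisted analogue of \cite[Prop.4.2]{[AMW02]}---namely, the fact that for a twisted conjugacy class the Liouville form of Example \ref{Ex_TConj_Cl} coincides with the Riemannian volume form of the class as a homogeneous space---this top-degree form agrees with the Riemannian volume form of $G\kappa\cong\mathrm{Ad}_{G\times G}^{\nu}(\kappa,\kappa^{-1})$. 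Since $G\kappa$ is a right coset of $G$ in $K=G\rtimes\langle\kappa\rangle$, right translation by $\kappa^{-1}$ is an isometry $G\kappa\to G$, giving $\mathrm{Vol}(G\kappa)=\mathrm{vol}_{G}$.

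For (2), I would exhibit both $D_{\sigma}(G)$ and $G\tau\circledast G\kappa$ as the same $(G\times G)$-tq-Hamiltonian structure on the manifold $G\times G$. Viewing each of $G\tau$ and $G\kappa$ as a $(G\times G)$-tq-Hamiltonian space via Example \ref{Ex_Gkappa}, the symbol ``$\circledast$'' here is to be interpreted as the \emph{cross fusion} diagonalizing the first $G$-factor of $G\tau$'s action with the second $G$-factor of $G\kappa$'s action, and vice versa. A direct unpacking of (\ref{Eq_Gkappa_Action}) then reproduces the $G\times G$-action of Example \ref{Ex_Moduli_Double}, namely $(g_{1},g_{2})\cdot(x,y)=(g_{1}x\tau(g_{2}^{-1}),g_{2}y\kappa(g_{1}^{-1}))$. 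Applying Proposition \ref{Prop_Def_Internal_Fusion} to the moment maps (\ref{Eq_Gkappa_Moment}) should then, after the two internal fusions, yield exactly the moment map components (\ref{Eq_Components_Moment_Dble}).

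Since $\omega_{G\tau}=\omega_{G\kappa}=0$, the fused 2-form reduces entirely to the sum of the two Maurer-Cartan correction terms from the two fusions. A routine manipulation using the $\mathrm{Aut}(G)$-invariance of $B$ together with identities such as $(x\tau)^{\ast}\theta^{L}=\tau^{-1}(x^{\ast}\theta^{L})$ for the left Maurer-Cartan form on $K$ should reduce these correction terms to formula (\ref{Eq_2Form_Double}). I expect the principal obstacle to lie precisely in this bookkeeping: one must track carefully how the automorphisms $\tau,\kappa$ and their inverses propagate through the Maurer-Cartan forms on the semidirect-product targets of the $G\tau$ and $G\kappa$ moment maps. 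A cleaner alternative would be to invoke \v{S}evera's formula (Theorem \ref{Thm_TMS_tq-Ham_str}) for $D_{\sigma}(G)$ with a polygonal presentation dual to the cross-fusion pattern; the multiplication rule (\ref{Eq_K(M)_Prod}) in $K(M)\times\Omega^{2}(M)$ then encodes the desired fusion identities essentially automatically.
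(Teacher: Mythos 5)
The proposal identifies the right ingredients for both parts, but leaves a genuine gap in (1) and takes a mild detour in (2).

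For (1), you correctly note that $\omega_{G\kappa}\equiv 0$ reduces the Liouville form to $\bigl(\Phi^{\ast}\psi^{\nu}_{K\times K}\bigr)_{[\mathrm{top}]}$ and that the twisted analogue of \cite[Prop.4.2]{[AMW02]} identifies the Liouville volume of a twisted conjugacy class with a Riemannian volume. Where you go astray is the final line: the assertion that ``right translation by $\kappa^{-1}$ is an isometry $G\kappa\to G$, giving $\mathrm{Vol}(G\kappa)=\mathrm{vol}_{G}$'' conflates two distinct metrics. The Liouville volume of $\mathcal{C}=\mathrm{Ad}^{\nu}_{G\times G}(\tau,\tau^{-1})$ is, via the AMW formula, $|\det_{\mathfrak{c}}(\mathrm{Ad}_{(\tau,\tau^{-1})}\nu-1)|^{1/2}\cdot\mathrm{vol}_{G\times G}/\mathrm{vol}_{Z^{\nu}}$, whereas your right-translation argument speaks of the bi-invariant metric on $G$ pulled back to the coset $G\kappa\subset K$. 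The embedding $G\kappa\cong\mathcal{C}\hookrightarrow G\times G$ pulls the product metric back to \emph{twice} the bi-invariant metric on $G$ (since $\xi\mapsto(\xi,-\tau^{-1}(\xi))$ squares the norm), so the naive coset volume is off by a factor $2^{\frac{1}{2}\dim\mathfrak{g}}$. The paper's proof is not hand-wavy here for a reason: it identifies $\mathfrak{c}=\ker(\mathrm{Ad}_{(\tau,\tau^{-1})}\nu+1)$ explicitly, computes the determinant ($|\!-\!2|^{\dim\mathfrak{g}}$, so $2^{\frac{1}{2}\dim\mathfrak{g}}$ after the square root), and observes that the stabilizer $Z^{\nu}_{(\tau,\tau^{-1})}=\{(\tau(g),g)\}$ carries twice the bi-invariant metric, contributing $2^{\frac{1}{2}\dim\mathfrak{g}}\mathrm{vol}_{G}$ to the denominator. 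These two $2^{\frac{1}{2}\dim\mathfrak{g}}$ factors cancel, yielding $\mathrm{vol}_{G}$. Your argument skips the step that makes this cancellation visible; without it, the equality is not established.

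For (2), your high-level plan (exhibit both sides as the same structure on $G\times G$) is sound, but your implementation via ``cross fusion'' across the two $G$-factors is not the route the paper takes, and it is not an operation the paper defines. The paper's device is simpler: it keeps $(G\tau,0,\Phi)$ with $\Phi(g)=(g\tau,(g\tau)^{-1})$ from Example \ref{Ex_Gkappa}, but replaces the moment map of $G\kappa$ by the component swap $\Psi(g)=((g\kappa)^{-1},g\kappa)$, so that the ordinary diagonal fusion of Proposition \ref{Prop_Def_Fusion} (with $G$ replaced by $G\times G$ throughout) directly reproduces the moment map components (\ref{Eq_Components_Moment_Dble}) and the form (\ref{Eq_2Form_Double}). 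Your cross-fusion idea would likely produce the same outcome, but it would first require you to formalize a new fusion rule and re-verify its compatibility with Propositions \ref{Prop_Def_Internal_Fusion}--\ref{Prop_Def_Fusion}, whereas the paper's reparametrization avoids this entirely. You also do not actually carry out the Maurer--Cartan bookkeeping, so the verification that the two $2$-forms agree remains to be done.
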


\begin{proof}
Let $K=G\rtimes\langle\kappa,\tau\rangle$, and let $\nu$ denote
the involution $K\times K\to K\times K$, $(a,b)\mapsto(b,a)$. Recall
from Example \ref{Ex_Gkappa} that $(G\tau,0,\Phi)$ is the $\nu$-twisted
conjugacy class of $(\tau,\tau^{-1})\in K\times K$ with moment map
$\Phi(g)=(g\tau,(g\tau)^{-1})$.

\noindent (1) This follows from the proof of \cite[Prop.4.4]{[AMW02]},
with the next modifications. Here, $G\tau$ is identified with the
symmetric space $(G\times G)/Z_{(\tau,\tau^{-1})}^{\nu}$, where the
stabilizer of $(\tau,\tau^{-1})$ is given by:
\[
Z_{(\tau,\kappa^{-1})}^{\nu}=\left\{ (\tau(g),g)\in G\times G\ \big|\ g\in G\right\} .
\]
Letting $\mathfrak{c}=(\mathrm{Lie}Z_{(\tau,\tau^{-1})}^{\nu})^{\perp}=\ker(\nu+1)$,
we have by \cite[Prop.4.2]{[AMW02]}:
\[
\mathrm{Vol}(G\tau)=2^{\frac{1}{2}\dim\mathfrak{g}}\frac{\mathrm{vol}_{G}^{2}}{2^{\frac{1}{2}\dim\mathfrak{g}}\mathrm{vol}_{G}}=\mathrm{vol}_{G}=\big|\mathrm{det}_{\mathfrak{c}}\left(\mathrm{Ad}_{(\tau,\tau^{-1})}\nu-1\right)\big|^{\frac{1}{2}}\frac{\mathrm{vol}_{G\times G}}{\mathrm{vol}_{Z_{(\tau,\tau^{-1})}^{\nu}}},
\]
since the induced Riemannian metric on $Z_{(\tau,\tau^{-1})}^{\nu}\subseteq G\times G$
is twice that of $G$.

\noindent (2) For this part, we endow the space $G\kappa$ with the
tq-Hamiltonian structure $(G\kappa,0,\Psi)$, where $\Psi(g)=((g\kappa)^{-1},g\kappa)$,
and keep the structure above for $G\tau$. The moment map $\Phi_{\mathrm{fus}}$
of the fusion product $G\tau\circledast G\kappa$ takes values in
the component $G\tau\kappa\times G\tau^{-1}\kappa^{-1}$ of $K\times K$.
Under our usual identification $G\tau\kappa\times G\tau^{-1}\kappa^{-1}\equiv G\times G$,
the components of $\Phi_{\mathrm{fus}}$ are the same as those of
Example \ref{Ex_Moduli_Double}, and the resulting 2-form $\omega_{\mathrm{fus}}$
is that of $D_{\sigma}(G)$.
\end{proof}
\begin{prop}
Let $\mathrm{DH}_{D_{\sigma}(G)}$ and $\mathrm{DH}_{\mathbb{D}_{\varphi}(G)}$
denote the DH measures of the double $D_{\sigma}(G)$ and its internal
fusion $\mathbb{D}_{\varphi}(G)$ respectively. The Fourier coefficients
are respectively given by: 
\[
\langle\mathrm{DH}_{D_{\sigma}(G)},\tilde{\chi}_{\lambda}^{\tau\kappa}\otimes\tilde{\chi}_{\mu}^{\tau^{-1}\kappa^{-1}}\rangle=\delta_{\lambda\mu}\mathrm{vol}_{G}^{2},\ \ \forall\lambda,\mu\in(\Lambda_{+}^{\ast})^{\kappa}\cap(\Lambda_{+}^{\ast})^{\tau}.
\]
\[
\langle\mathrm{DH}_{\mathbb{D}_{\varphi}(G)},\tilde{\chi}_{\lambda}^{[\tau,\kappa]}\rangle=(\dim V_{\lambda})^{-1}\mathrm{vol}_{G}^{2},\ \ \forall\lambda\in(\Lambda_{+}^{\ast})^{\kappa}\cap(\Lambda_{+}^{\ast})^{\tau}.
\]
\end{prop}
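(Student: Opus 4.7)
\emph{Proof proposal.} My plan is to reduce both formulas to the fusion formalism of Section \ref{SubSec_tq-Ham_mflds} and then invoke the orthogonality and product relations of twining characters in Proposition \ref{Prop_Twining_Char_L2_Class_Fns}.

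For the first formula, part (2) of the preceding lemma gives $D_\sigma(G) = G\tau\circledast G\kappa$, and Corollary \ref{Cor_Fusion_prod_Liouville} then yields $\Lambda_{D_\sigma(G)} = \Lambda_{G\tau}\otimes\Lambda_{G\kappa}$. Since $\omega_{G\tau}=\omega_{G\kappa}=0$ and each of $G\tau$, $G\kappa$ is a single $(G\times G)$-orbit, both $\Lambda_{G\tau}$ and $\Lambda_{G\kappa}$ are proportional to the Riemannian volume form, and part (1) of the lemma fixes the constant to $1$. Together with the moment map components of Example \ref{Ex_Moduli_Double}, the Fourier coefficient becomes
\[
\int_{G\times G}\tilde{\chi}_{\lambda}^{\tau\kappa}\bigl(x\tau(y)\bigr)\,\tilde{\chi}_{\mu}^{\tau^{-1}\kappa^{-1}}\!\bigl(\tau^{-1}(x^{-1}\kappa^{-1}(y^{-1}))\bigr)\,d\mathrm{vol}_G(x)\,d\mathrm{vol}_G(y).
\]
Substituting $u=x\tau(y)$, the second factor rewrites as $\tilde{\chi}_{\mu}^{\tau^{-1}\kappa^{-1}}\bigl(\mathrm{Ad}_y^{\tau^{-1}\kappa^{-1}}(\tau^{-1}(u^{-1}))\bigr)$, so the inner $y$-integral collapses to $\mathrm{vol}_G\cdot\tilde{\chi}_{\mu}^{\tau^{-1}\kappa^{-1}}(u^{-1})$ by $\mathrm{Ad}^{\tau^{-1}\kappa^{-1}}_G$-invariance. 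Using $\tilde{\chi}_{\mu}^{\tau^{-1}\kappa^{-1}}(u^{-1})=\overline{\tilde{\chi}_{\mu}^{\tau\kappa}(u)}$, the remaining $u$-integral is $\mathrm{vol}_G\,\delta_{\lambda\mu}$ by the orthogonality relations in Proposition \ref{Prop_Twining_Char_L2_Class_Fns}-(2), giving $\delta_{\lambda\mu}\,\mathrm{vol}_G^2$.

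For the second formula, $\mathbb{D}_\varphi(G)$ is the internal fusion of $D_\sigma(G)$ (cf.\ Example \ref{Ex_Moduli_Fused_Double}), with moment map $\Phi_{\mathbb{D}_\varphi} = \mathrm{Mult}\circ(1\times\tau\kappa)\circ(\Phi_1,\Phi_2)$. Since the twisted multiplication map is a Dirac morphism (Proposition \ref{Prop_Twisted_Mult_Inv}), internal fusion preserves the Liouville form, so $\mathrm{DH}_{\mathbb{D}_\varphi(G)} = \bigl(\mathrm{Mult}\circ(1\times\tau\kappa)\bigr)_*\mathrm{DH}_{D_\sigma(G)}$. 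Applying equation (\ref{Eq_Prod_Twining}) to express $\tilde{\chi}_\lambda^{\tau\kappa}\otimes\tilde{\chi}_\lambda^{\tau^{-1}\kappa^{-1}}$ as an $\mathrm{Ad}^{\tau\kappa}$-averaged lift of $\tilde{\chi}_\lambda^{[\tau,\kappa]}$, and exploiting the $\mathrm{Ad}^{\tau\kappa}_G\times\mathrm{Ad}^{\tau^{-1}\kappa^{-1}}_G$-invariance of $\mathrm{DH}_{D_\sigma(G)}$, one arrives at
\[
\langle\mathrm{DH}_{D_\sigma(G)},\tilde{\chi}_\lambda^{\tau\kappa}\otimes\tilde{\chi}_\lambda^{\tau^{-1}\kappa^{-1}}\rangle = \dim V_\lambda\,\langle\mathrm{DH}_{\mathbb{D}_\varphi(G)},\tilde{\chi}_\lambda^{[\tau,\kappa]}\rangle.
\]
Combined with the first formula at $\mu=\lambda$, this produces the stated $(\dim V_\lambda)^{-1}\mathrm{vol}_G^2$.

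The main technical obstacle is the bookkeeping of implementations $\tilde{\tau}_\lambda,\tilde{\kappa}_\lambda$ when changing variables by elements of $\mathrm{Aut}(G)$: specifically, the identity $\tilde{\chi}_\mu^{\tau^{-1}\kappa^{-1}}\circ\tau^{-1} = \tilde{\chi}_\mu^{\tau^{-1}\kappa^{-1}}$ used in the inner integral requires $\mu$ to be jointly fixed by $\tau$ and $\kappa$ together with compatible commutation of the implementations, which is automatic when $\mathrm{Out}(G)$ is abelian and needs separate verification only in the $D_4$ case.
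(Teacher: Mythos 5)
Your proof is correct and follows essentially the same route as the paper: decompose $D_\sigma(G)=G\tau\circledast G\kappa$ via the preceding lemma, write the Fourier coefficient as an explicit integral over $G\times G$, and close with the orthogonality and convolution/product identities of Proposition \ref{Prop_Twining_Char_L2_Class_Fns}. The substitution $u=x\tau(y)$ you use is a cosmetic repackaging of the paper's chain of twining-character identities, and you are right to flag the identity $\tilde{\chi}_\mu^{\tau^{-1}\kappa^{-1}}\circ\tau^{-1}=\tilde{\chi}_\mu^{\tau^{-1}\kappa^{-1}}$ as the place where commutation of the implementations $\tilde{\tau}_\mu,\tilde{\kappa}_\mu$ is invoked, which holds whenever $\tau$ and $\kappa$ commute in $\mathrm{Out}(G)$ — a hypothesis the paper itself leaves implicit.
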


\begin{proof}
For the first equation, we have by the previous lemma that:
\[
\langle\mathrm{DH}_{D_{\sigma}(G)},\tilde{\chi}_{\lambda}^{\tau\kappa}\otimes\tilde{\chi}_{\mu}^{\tau^{-1}\kappa^{-1}}\rangle=\int_{G\times G}\tilde{\chi}_{\lambda}^{\tau\kappa}\left(a\tau(b)\right)\tilde{\chi}_{\mu}^{\tau^{-1}\kappa^{-1}}\left(\tau^{-1}\left(a^{-1}\kappa^{-1}(b^{-1})\right)\right)d\mathrm{vol}_{G\times G}(a,b).
\]
Using the identities:
\[
\overline{\tilde{\chi}_{\mu}^{\kappa\tau}(g)}=\tilde{\chi}_{\mu}^{(\kappa\tau)^{-1}}\left((\kappa\tau)^{-1}(g^{-1})\right),\ \ \tilde{\chi}_{\mu}^{\kappa\tau}=\tau^{\ast}\tilde{\chi}_{\mu}^{\tau\kappa},\ \ (\mathrm{Ad}_{g}^{\tau\kappa})^{\ast}\tilde{\chi}_{\mu}^{\tau\kappa}=\tilde{\chi}_{\mu}^{\tau\kappa},
\]
we have that:
\[
\tilde{\chi}_{\mu}^{\tau^{-1}\kappa^{-1}}\left(\tau^{-1}\left(a^{-1}\kappa^{-1}(b^{-1})\right)\right)=\overline{\tilde{\chi}_{\mu}^{(\kappa\tau)}\left(a\tau(b)\right)}.
\]
Next, since $d\mathrm{vol}_{G}(g)=\mathrm{vol}_{G}dg$, combining
a change of variable, the invariance of $dg$ under translation, and
the orthogonality of twining characters yields:
\[
\langle\mathrm{DH}_{D_{\sigma}(G)},\tilde{\chi}_{\lambda}^{\tau\kappa}\otimes\tilde{\chi}_{\mu}^{\tau^{-1}\kappa^{-1}}\rangle=\mathrm{vol}_{G}^{2}\langle\tilde{\chi}_{\mu}^{(\kappa\tau)},\tilde{\chi}_{\lambda}^{(\kappa\tau)}\rangle=\delta_{\lambda\mu}\mathrm{vol}_{G}^{2}.
\]
Since $\mathbb{D}_{\varphi}(G)$ is the internal fusion of $D_{\sigma}(G)$,
we have by Proposition \ref{Prop_Twining_Char_L2_Class_Fns}-(3):
\begin{align*}
\langle\mathrm{DH}_{\mathbb{D}_{\varphi}(G)},\tilde{\chi}_{\lambda}^{[\tau,\kappa]}\rangle & =\tfrac{1}{\dim V_{\lambda}}\langle\mathrm{Mult}_{\ast}(1\times\tau\kappa)_{\ast}\mathrm{DH}_{D_{\sigma}(G)},\tilde{\chi}_{\lambda}^{\tau\kappa}\ast\tilde{\chi}_{\lambda}^{\tau^{-1}\kappa^{-1}}\rangle\\
 & =\tfrac{1}{\dim V_{\lambda}}\langle\mathrm{DH}_{D_{\sigma}(G)},\tilde{\chi}_{\lambda}^{\tau\kappa}\otimes\tilde{\chi}_{\lambda}^{\tau^{-1}\kappa^{-1}}\rangle=\tfrac{\mathrm{vol}_{G}^{2}}{\dim V_{\lambda}}.\qedhere
\end{align*}
\end{proof}
Combining the above with the remark following Proposition \ref{Prop_Convolution_DH_Meas_Fusion},
the argument of \cite[Prop.4.6]{[AMW02]} leads to the following corollary:
\begin{cor}
Consider the surface $\Sigma=\Sigma_{h}^{1}$ with fundamental groupoid
$\Pi$, let $\tilde{\varphi}_{i}\in\mathrm{Hom}(\Pi,\mathrm{Out}(G))$
be twists for $1\le i\le h$ such that the moment map of $\mathbb{D}_{\tilde{\varphi}_{i}}(G)$
is $G\tau_{i}$-valued, and let $\mathcal{C}_{j}\subseteq G$ be $\kappa_{j}$-twisted
conjugacy classes for $1\le j\le b$. Consider the space:
\[
M=\mathbb{D}_{\tilde{\varphi}_{1}}(G)\circledast\cdots\circledast\mathbb{D}_{\tilde{\varphi}_{h}}(G)\circledast\mathcal{C}_{1}\circledast\cdots\circledast\mathcal{C}_{b},
\]
with moment map:
\[
\Phi:M\to G\tau_{1}\times\cdots G\tau_{h}\times G\kappa_{1}\times\cdots G\kappa_{b}.
\]
For a dominant integral weight $\lambda\in\Lambda_{+}^{\ast}$ that
is invariant under the $\tau_{i}$'s and $\kappa_{j}$'s, the corresponding
Fourier coefficient of $\mathrm{DH}_{\Phi}$ is given by:
\[
\langle\tilde{\chi}_{\lambda}^{(\tau_{1}\cdots\tau_{h}\kappa_{1}\cdots\kappa_{b})},\mathrm{DH}_{\Phi}\rangle=\frac{(\mathrm{vol}_{G})^{2h}}{(\dim V_{\lambda})^{2h+b-1}}\prod_{j=1}^{b}\mathrm{Vol}(\mathcal{C}_{j})\tilde{\chi}_{\lambda}^{\kappa_{j}}(\mathcal{C}_{j}).
\]
\end{cor}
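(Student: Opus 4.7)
The proof is essentially an assembly of three ingredients that have already been established, so my plan is to combine them carefully and verify the bookkeeping of the $\dim V_\lambda$ factors.

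First, I would apply the iterated fusion formula for Fourier coefficients stated in the remark after Proposition \ref{Prop_Convolution_DH_Meas_Fusion}. Viewing $M$ as the fusion product of $r = h+b$ tq-Hamiltonian factors, it gives
\[
\langle\mathrm{DH}_{\Phi},\tilde{\chi}_{\lambda}^{(\tau_{1}\cdots\tau_{h}\kappa_{1}\cdots\kappa_{b})}\rangle = (\dim V_{\lambda})^{1-h-b}\prod_{i=1}^{h}\langle\mathrm{DH}_{\mathbb{D}_{\tilde{\varphi}_{i}}(G)},\tilde{\chi}_{\lambda}^{\tau_{i}}\rangle\prod_{j=1}^{b}\langle\mathrm{DH}_{\mathcal{C}_{j}},\tilde{\chi}_{\lambda}^{\kappa_{j}}\rangle,
\]
which reduces the problem to computing the two types of Fourier coefficients on the right. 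The $\tau_i$- and $\kappa_j$-invariance of $\lambda$ is exactly what makes each of these twining characters well-defined on the relevant component.

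Second, I would substitute the formula from the preceding proposition, which gives $\langle\mathrm{DH}_{\mathbb{D}_{\tilde{\varphi}_{i}}(G)},\tilde{\chi}_{\lambda}^{\tau_{i}}\rangle = (\dim V_{\lambda})^{-1}\mathrm{vol}_{G}^{2}$ for every $i$. This accounts for the factor $\mathrm{vol}_{G}^{2h}$ in the numerator and an extra $(\dim V_{\lambda})^{-h}$ that combines with $(\dim V_{\lambda})^{1-h-b}$ to yield the desired power $(\dim V_{\lambda})^{-(2h+b-1)}$.

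Third, I would compute the Fourier coefficient for each twisted conjugacy class $\mathcal{C}_{j}$. Since $\mathcal{C}_{j}$ is a tq-Hamiltonian space with moment map the inclusion $\mathcal{C}_{j}\hookrightarrow G\kappa_{j}$, its DH measure is simply the pushforward of its Liouville measure onto $G$, and I would argue (using that the Liouville volume equals the Riemannian volume of $\mathcal{C}_j$, as in the example following Theorem \ref{Thm_Twisted_DH}) that $\mathrm{DH}_{\mathcal{C}_j}$ is the surface measure on $\mathcal{C}_j\subseteq G$ of total mass $\mathrm{Vol}(\mathcal{C}_j)$. Since $\tilde{\chi}_\lambda^{\kappa_j}$ is $\mathrm{Ad}_G^{\kappa_j}$-invariant and hence constant on $\mathcal{C}_j$, the pairing yields $\langle\mathrm{DH}_{\mathcal{C}_j},\tilde{\chi}_\lambda^{\kappa_j}\rangle = \mathrm{Vol}(\mathcal{C}_j)\,\tilde{\chi}_\lambda^{\kappa_j}(\mathcal{C}_j)$.

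Assembling the three pieces and collecting the powers of $\dim V_{\lambda}$ then gives the stated formula. The only delicate point, which I would pause to justify carefully, is the evaluation of $\langle\mathrm{DH}_{\mathcal{C}_j},\tilde{\chi}_\lambda^{\kappa_j}\rangle$: one must check that the Liouville form $\Lambda_{\mathcal{C}_j}$ constructed from $\psi_G^{\kappa_j}$ via the inclusion really reproduces the Riemannian volume form on $\mathcal{C}_j$. This is the content of the example following Theorem \ref{Thm_Twisted_DH}, so no new computation is needed; everything else is algebraic bookkeeping.
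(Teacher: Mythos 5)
Your proposal is correct and follows exactly the route the paper indicates: iterate the fusion formula from the remark after Proposition~\ref{Prop_Convolution_DH_Meas_Fusion}, substitute the Fourier coefficients of the fused doubles from the preceding proposition, and pair the conjugacy-class DH measures against the twining characters. One minor remark: to get $\langle\mathrm{DH}_{\mathcal{C}_j},\tilde{\chi}_\lambda^{\kappa_j}\rangle = \mathrm{Vol}(\mathcal{C}_j)\,\tilde{\chi}_\lambda^{\kappa_j}(\mathcal{C}_j)$ you only need that $\mathrm{DH}_{\mathcal{C}_j}$ is supported on $\mathcal{C}_j$ with total mass $\mathrm{Vol}(\mathcal{C}_j)$ and that $\tilde{\chi}_\lambda^{\kappa_j}$ is constant there, so the pointwise comparison of the Liouville and Riemannian volume forms (from the example after Theorem~\ref{Thm_Twisted_DH}) is not strictly required for this step --- it only matters if one wishes to interpret $\mathrm{Vol}(\mathcal{C}_j)$ as a Riemannian volume.
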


In conclusion, the results of \cite[\S 4.3]{[AMW02]} extend to the
case of twisted moduli spaces in the expected way.

\section{Dirac geometry and twisted conjugation\label{App_Dirac_Geo}}

This section adapts certain key results from \cite{[AMB09]} to the
setup of twisted conjugation, and gathers several technical facts
used in the previous sections. Section \ref{Subsec_Dirac_Geo} reviews
the fundamental concepts of Dirac geometry, namely Dirac structures,
Dirac morphisms and pure spinors. Section \ref{Subsec_Dirac_Str_Pure_Spinors}
summarizes the properties of certain Dirac structures on a Lie group,
and discusses the pure spinors involved in the definition of Liouville
forms and localization. Section \ref{Subsec_Dirac_Fusion} looks at
the Dirac geometry of fusion. We continue with the notation introduced
in sections \ref{SubSec_Twist_Conj} and \ref{Subsec_Twining_Char}.

\subsection{Reminders\label{Subsec_Dirac_Geo}}

\subsubsection{Dirac structures}

The starting point of Dirac geometry is the following concept \cite[\S 2.1]{[AMB09]}:
\begin{defn}
Let $M$ be a manifold with a closed 3-form $\chi\in\Omega^{3}(M)$,
and let $\mathbb{T}M=TM\oplus T^{\ast}M$ be equipped with the split
symmetric bilinear form:
\[
\langle X\oplus\alpha,Y\oplus\beta\rangle=\iota_{X}\beta+\iota_{Y}\alpha,\ \ \forall X\oplus\alpha,Y\oplus\beta\in\Gamma(\mathbb{T}M).
\]
A ($\chi$-twisted) \textbf{Dirac structure} on $M$ is a subbundle
$E\subset\mathbb{T}M$ which is Lagrangian (i.e. $E^{\perp}=E$),
and whose sections are closed under the following Courant bracket
on $\Gamma(\mathbb{T}M)$:
\[
[\![X\oplus\alpha,Y\oplus\beta]\!]_{\chi}=[X,Y]+\mathcal{L}_{X}\beta-d\iota_{Y}\alpha+\iota_{X}\iota_{Y}\chi,\ \ \forall X\oplus\alpha,Y\oplus\beta\in\Gamma(\mathbb{T}M).
\]
\end{defn}

An obvious example of a Dirac structure on a manifold $M$ is $E=TM$
with $\chi=0$. For $\chi\ne0$, non-trivial examples of Dirac structures
are given by graphs of 2-forms $\omega\in\Omega^{2}(M)$ satisfying
$d\omega=\chi$, and by graphs of $\chi$\textit{-twisted Poisson
structures}, i.e. bivectors $\pi\in\mathfrak{X}^{2}(M)$ satisfying
$\frac{1}{2}[\pi,\pi]_{\mathrm{Sch}}+\pi^{\sharp}\chi=0$. Below,
we focus on certain Dirac structures on a Lie group $G$ with $\chi\in\Omega^{3}(G)$
the Cartan 3-form $\eta=\frac{1}{12}[\theta^{L},\theta^{L}]\cdot\theta^{L}$.
These are nicely described in terms of a distinguished trivialization
of $\mathbb{T}G$ \cite[\S 3]{[AMB09]} which we now remind of.

Let $G$ be a Lie group, let $B$ be an $\mathrm{Aut}(G)$-invariant
symmetric non-degenerate bilinear form on $\mathfrak{g}=\mathrm{Lie}(G)$.
Equip $G$ with the action of $G\times G$ given by the map $\mathcal{A}:G\times G\to\mathrm{Diff}G$
given by:
\begin{equation}
\mathcal{A}(a,b)\cdot g=bga^{-1},\ \ \forall a,b,g\in G.\label{Eq_Action_G_Double}
\end{equation}
Letting $\overline{\mathfrak{g}}$ designate $\mathfrak{g}$ equipped
with the bilinear form $-B$, let $\mathfrak{d}:=\mathfrak{g}\oplus\overline{\mathfrak{g}}$
denote the Lie algebra of $G\times G$ with the sum of Lie brackets
and inner product $B_{\mathfrak{d}}=B\oplus-B$. The infinitesimal
action of $\mathfrak{d}$ on $G$ lifts to the trivialization $\mathsf{s}:G\times\mathfrak{d}\to\mathbb{T}G$
given by:
\begin{equation}
\mathsf{s}(\xi\oplus\zeta)|_{g}=(\xi_{g}^{L}-\zeta_{g}^{R})\oplus\tfrac{1}{2}(\theta_{g}^{L}\cdot\xi+\theta_{g}^{R}\cdot\zeta),\ \ \forall(\xi\oplus\zeta)\in\mathfrak{d},g\in G\label{Eq_Triv_map_TTG}
\end{equation}
which is a $G\times G$-equivariant map such that \cite[Prop.3.1]{[AMB09]}:
\[
B_{\mathfrak{d}}(x,y)=\langle\mathsf{s}(x),\mathsf{s}(y)\rangle,\ \ \mathsf{s}([x,y]_{\mathfrak{d}})=[\![\mathsf{s}(x),\mathsf{s}(y)]\!]_{\eta},
\]
for all $x,y\in\mathfrak{d}$. A central property of this map is that
\textit{it associates an $\eta$-twisted Dirac structure $E^{\mathfrak{s}}=\mathsf{s}(G\times\mathfrak{s})$
to any Lagrangian subalgebra} $\mathfrak{s}\subset\mathfrak{d}$ \cite[\S 3.2]{[AMB09]}.
The next examples look at two Dirac structures that are intimately
related to tq-Hamiltonian manifolds.
\begin{example}
\textbf{\label{Ex_Cartan-Dirac_F_kappa}(Cartan-Dirac structure)}
For a fixed automorphism $\kappa\in\mathrm{Aut}(G)$, consider the
$\kappa$-twisted diagonal subalgebra $\mathfrak{g}_{\Delta}^{\kappa}=\{\kappa(\xi)\oplus\xi\ |\ \xi\in\mathfrak{g}\}$
of $\mathfrak{d}$. Its image under the trivialization $\mathsf{s}$
is the $\boldsymbol{\kappa}$\textbf{-twisted Cartan-Dirac structure}
on $G$ \cite[Rk.3.5]{[Mein17]}:
\[
E_{G}^{\kappa}=\left\{ (\kappa(\xi)^{L}-\xi^{R})\oplus\tfrac{1}{2}(\kappa^{-1}\theta^{L}+\theta^{R})\cdot\xi\ \big|\ \xi\in\mathfrak{g}\right\} .
\]
This Dirac structure is the twisted counterpart to the Cartan-Dirac
structure studied in \cite[\S 3.3]{[AMB09]}. For the upcoming discussion,
we also introduce the following Lagrangian complement of $E_{G}^{\kappa}\subset\mathbb{T}G$:
\[
F_{G}^{\kappa}=\left\{ (\kappa(\xi)^{L}+\xi^{R})\oplus\tfrac{1}{2}(\kappa^{-1}\theta^{L}-\theta^{R})\cdot\xi\ \big|\ \xi\in\mathfrak{g}\right\} .
\]
We have that $F_{G}^{\kappa}=\mathsf{s}(G\times\mathfrak{g}_{\Delta^{-}}^{\kappa})$,
where $\mathfrak{g}_{\Delta^{-}}^{\kappa}=\{\kappa(\xi)\oplus-\xi\ |\ \xi\in\mathfrak{g}\}$
is the twisted anti-diagonal in $\mathfrak{d}$. Since $\mathfrak{g}_{\Delta^{-}}^{\kappa}$
is not a subalgebra, $F_{G}^{\kappa}$ is not Courant integrable.
\end{example}

\begin{example}
\textbf{\label{Ex_Gauss-Dirac}(Gauss-Dirac structure)} Suppose that
$G$ is a complex Lie group, and let $\mathfrak{g}=\mathfrak{n}_{+}\oplus\mathfrak{h}\oplus\mathfrak{n}_{-}$
be the triangular decomposition of its Lie algebra. The $\boldsymbol{\kappa}$\textbf{-twisted
Gauss-Dirac structure} $\widehat{F}_{G}^{\kappa}\subset\mathbb{T}G$
is the image under $\mathsf{s}:G\times\mathfrak{d}\to\Gamma(\mathbb{T}G)$
of the Lagrangian subalgebra:
\begin{equation}
\mathfrak{s}^{\kappa}=\left\{ (\xi_{+}+\kappa(\xi_{0}))\oplus(\xi_{-}-\xi_{0})\ \big|\ \xi_{\pm}\in\mathfrak{n}_{\pm},\xi_{0}\in\mathfrak{h}\right\} \subset\mathfrak{d}.\label{Eq_Gauss-Dirac_subalgebra}
\end{equation}
With this Dirac structure, we have an alternative Lagrangian splitting
$\mathbb{T}G=E_{G}^{\kappa}\oplus\widehat{F}_{G}^{\kappa}$. The link
between $F_{G}^{\kappa}$ and $\widehat{F}_{G}^{\kappa}$ is clarified
in the next section.
\end{example}

\subsubsection{Dirac morphisms}
\begin{defn}
For $i=1,2$, let $M_{i}$ be manifolds with closed forms $\chi_{i}\in\Omega^{3}(M_{i})$,
and let $E_{i}\to M_{i}$ be Dirac structures. A (strong) \textbf{Dirac
morphism} $(M_{1},E_{1},\chi_{1})\dashrightarrow(M_{2},E_{2},\chi_{2})$
is a pair $(\Phi,\omega)$, where $\Phi:M_{1}\to M_{2}$ and $\omega\in\Omega^{2}(M_{1})$
are such that:\begin{enumerate}

\item The forms $\chi_{i}$ and $\omega$ satisfy: $\Phi^{\ast}\chi_{2}=\chi_{1}+d\omega$;

\item For all $x\in M_{1}$ and $v_{2}\oplus\alpha_{2}\in E_{2}|_{\Phi(x)}$,
there exists a unique $v_{1}\oplus\alpha_{1}\in E_{1}|_{x}$ such
that:
\[
v_{2}=\Phi_{\ast}|_{x}v_{1},\ \ \Phi^{\ast}\alpha_{2}=\alpha_{1}+\iota_{v_{1}}\omega_{x}.
\]

\end{enumerate} Sections $\zeta_{i}\in\Gamma(E_{i})$ satisfying
the equations in (2) for all $x\in M_{1}$ are called $(\Phi,\omega)$-related,
and are denoted by $\zeta_{1}\sim_{(\Phi,\omega)}\zeta_{2}$.
\end{defn}

Regarding the Dirac morphisms that we will encounter below, it is
useful to recall the following points \cite[\S\S 1.6, 2.2]{[AMB09]}:
\begin{enumerate}

\item The composition of two Dirac morphisms $(\Phi_{1},\omega_{1})$
and $(\Phi_{2},\omega_{2})$ is given by:
\[
(\Phi_{2},\omega_{2})\circ(\Phi_{1},\omega_{1})=(\Phi_{2}\circ\Phi_{1},\omega_{1}+\Phi_{1}^{\ast}\omega_{2}).
\]

\item If the $M_{i}$ are $G$-manifolds and the Dirac structures
are $G$-equivariant, the Dirac morphism $(\Phi,\omega)$ is $G$-equivariant
when $\Phi:M_{1}\to M_{2}$ is $G$-equivariant and $\omega\in\Omega^{2}(M_{1})^{G}$.

\item For a (strong) Dirac morphism $(\Phi,\omega):(M_{1},E_{1},\chi_{1})\dashrightarrow(M_{2},E_{2},\chi_{2})$,
the uniqueness in condition (2) of the definition is equivalent to
the transversality condition that:
\[
\ker(\Phi,\omega)\cap E_{1}=0,
\]
 where $\ker(\Phi,\omega)\subset\mathbb{T}M_{1}$ is the subspace
of elements $X\oplus\alpha\sim_{(\Phi,\omega)}0$ \cite[\S 1.6]{[AMB09]}.

\end{enumerate}

Let $G$ be a Lie group with an automorphism $\kappa\in\mathrm{Aut}(G)$,
and let $E_{G}^{\kappa}\to G$ be the twisted Cartan-Dirac structure
introduced previously. Suppose that $M$ is a $G$-manifold, and let
$(\Phi,\omega):(M,TM,0)\dashrightarrow(G\kappa,E_{G}^{\kappa},\eta)$
be a $G$-equivariant Dirac morphism. Unwinding the definition, this
is equivalent to saying that the 2-form $\omega\in\Omega^{2}(M)^{G}$
and the equivariant map $\Phi:M\to G\kappa$ satisfy:\begin{itemize}

\item[(i)] $d\omega=\Phi^{\ast}\eta$;

\item[(ii)] $\iota_{\xi_{M}}\omega=\frac{1}{2}\Phi^{\ast}(\kappa^{-1}\theta^{L}+\theta^{R})\cdot\xi$,
for all $\xi\in\mathfrak{g}$;

\item[(iii)] $\ker\omega_{x}\cap\ker(\Phi_{\ast}|_{x})=0$, for all
$x\in M$;

\end{itemize} which are precisely the axioms making $(M,\omega,\Phi)$
a tq-Hamiltonian $G$-space. The case of untwisted q-Hamiltonian manifolds
($\kappa=1$) is addressed in \cite[\S 5.1]{[AMB09]}, and this formulation
in terms of Dirac morphisms was first discussed by Burzstyn and Crainic
in \cite{[BC05]}.

\subsubsection{Pure spinors}

Recall that for a vector space $V$ over $\mathbb{R}$ or $\mathbb{C}$
\cite[\S 1]{[AMB09]}, the pairing $\langle\cdot,\cdot\rangle$ between
vectors and covectors on $\mathbb{V}=V\oplus V^{\ast}$ is a non-degenerate
symmetric bilinear form of split signature. Taking the Clifford algebra
$\mathrm{Cl}(\mathbb{V})$ with product:
\[
xy+yx=\langle x,y\rangle1,\ \ \forall x,y\in\mathbb{V},
\]
the exterior algebra $\wedge V^{\ast}$ gives a spinor module for
$\mathrm{Cl}(\mathbb{V})$, where the isomorphism $\varrho:\mathrm{Cl}(\mathbb{V})\to\mathrm{End}(\wedge V^{\ast})$
is such that:
\begin{equation}
\varrho(v\oplus\alpha)\phi=\iota_{v}\phi+\alpha\wedge\phi,\ \ \forall v\oplus\alpha\in\mathbb{V},\phi\in\wedge V^{\ast}.\label{Eq_Action_TTM_Clif}
\end{equation}
An element $\phi\in\wedge V^{\ast}\smallsetminus\{0\}$ is called
a \textbf{pure spinor} if the subspace:
\[
N_{\phi}=\left\{ x\in\mathbb{V}\ \big|\ \varrho(x)\phi=0\right\} 
\]
is Lagrangian in $\mathbb{V}$. Furthermore, any Lagrangian subspace
$E\subset\mathbb{V}$ arises as a subspace $N_{\phi}$ for an appropriate
$\phi\in\wedge V^{\ast}\smallsetminus\{0\}$ (see \cite[III.1.9]{[ChevSpnrBk]}
or \cite[Eq.(6)]{[AMB09]} for a precise expression).

The constructions of the previous paragraph extend to the smooth category.
For a manifold $M$, the bundle $\wedge T^{\ast}M$ is a spinor module
for $\mathrm{Cl}(\mathbb{T}M)$, and any Lagrangian subbundle $E\subset\mathbb{T}M$
can be locally described by pure spinors in $\Omega(M)$. If $E\subset\mathbb{T}M$
is $\chi$-twisted Dirac structure on $M$ defined by a pure spinor
$\phi\in\Omega(M)$, then the latter satifies a differential equation
of the form:
\[
(d+\chi)\phi=\varrho(\sigma^{E})\phi,
\]
where $\sigma^{E}\in\Gamma(E^{\ast})$ is a unique section depending
on $\phi$ (see \cite[Prop.2.2]{[AMB09]} for more details).

Our main interest in pure spinors defining Dirac structures is their
relation to volume forms. As a consequence of \cite[Prop.1.15-(c)]{[AMB09]},
we have the following fact:
\begin{prop}
\label{Prop_DiracMorph_VolForm}Let $M$ be a manifold, let $(N,E,\chi)$
be a Dirac structure on a manifold $N$, and let $F\subseteq\mathbb{T}N$
be a Lagrangian complement to $E$ defined by a pure spinor $\psi_{F}\in\Omega(N)$.
If $(\Phi,\omega):(M,TM,0)\dashrightarrow(N,F,\chi)$ is a strong
Dirac morphism, then the backward image:
\[
(\Phi,\omega)^{!}F=\left\{ x\in\mathbb{T}M\ \big|\ \exists y\in F\text{ such that }x\sim_{(\Phi,\omega)}y\right\} 
\]
is a Lagrangian subbundle transverse to $TM$ in $\mathbb{T}M$, and
is defined by the pure spinor:
\[
e^{\omega}\Phi^{\ast}\psi_{F}\in\Omega(M).
\]
Furthermore, the top degree part $\left(e^{\omega}\Phi^{\ast}\psi_{F}\right)_{[\mathrm{top}]}\in\Omega^{[\mathrm{top}]}(M)$
is a volume form on $M$.
\end{prop}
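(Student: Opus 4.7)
The plan is to reduce the statement to a pointwise/linear-algebra claim, namely \cite[Prop.1.15(c)]{[AMB09]}, which gives the exact analogue at the level of vector spaces: for a strong linear Dirac morphism and a Lagrangian subspace $F$ of the target that is complementary to the target Dirac subspace, the backward image of $F$ is a Lagrangian subspace of the source transverse to the source Dirac subspace, its pure spinor representative is the indicated pullback-with-gauge expression, and the top-degree component of this representative is nonzero. Everything in the proposition is then a smooth packaging of this linear result.

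First I would apply the cited result fiberwise at each $x\in M$. The pair $(d\Phi_x,\omega_x)$ is a strong linear Dirac morphism because $\ker(\Phi,\omega)\cap TM=0$ at $x$, which is the content of the strong-Dirac hypothesis of the proposition restricted to that point. The pointwise transversality $E_{\Phi(x)}\cap F_{\Phi(x)}=0$ holds because $F$ is a Lagrangian complement of $E$ in $\mathbb{T}N$. Invoking \cite[Prop.1.15(c)]{[AMB09]} at each fiber yields a Lagrangian subspace $\bigl((\Phi,\omega)^!F\bigr)_x\subseteq\mathbb{T}_xM$, transverse to $T_xM$, defined by the pure spinor $e^{\omega_x}(\Phi^*\psi_F)_x\in\wedge T_x^*M$.

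Second, the global/smooth assembly is automatic. The form $e^\omega\Phi^*\psi_F\in\Omega(M)$ is smooth by construction, and its pointwise annihilator is a Lagrangian of constant rank $\dim M$, so $(\Phi,\omega)^!F$ is a smooth Lagrangian subbundle of $\mathbb{T}M$ transverse to $TM$. Because any Lagrangian subbundle of $\mathbb{T}M$ transverse to $TM$ is the graph of a nondegenerate 2-form, its pure spinor is, up to a nonvanishing scalar function, of the form $e^{B}$ for such a 2-form $B$, whose top-degree component $B^{\dim M/2}/(\dim M/2)!$ is a nowhere-vanishing top form. Therefore $\bigl(e^\omega\Phi^*\psi_F\bigr)_{[\mathrm{top}]}\in\Omega^{[\mathrm{top}]}(M)$ is a volume form on $M$.

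The main substantive step is contained in the linear result \cite[Prop.1.15(c)]{[AMB09]}; the remaining steps are routine bundle-theoretic and linear-algebraic observations. The only place where the strong Dirac morphism hypothesis really enters is in verifying the transversality $(\Phi,\omega)^!F\cap TM=0$ pointwise, and this is where I would expect the main obstacle to lie: one must carefully reconcile the fact that the target of the morphism is $F$ (rather than the Dirac structure $E$ it complements) with the fact that the target-side transversality $E\cap F=0$ is what actually drives the source-side transversality, via the identity $\ker(\Phi,\omega)\cap TM=0$ being independent of whether one views $F$ or $E$ as the target Lagrangian.
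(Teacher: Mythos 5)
Your approach matches the paper's, which likewise proves the proposition simply by invoking \cite[Prop.1.15-(c)]{[AMB09]} pointwise and packaging the result smoothly; the paper supplies no further argument. Your fiberwise reduction and the global-assembly observations (constant rank, graph of a nondegenerate $2$-form, nonvanishing top component) are the routine parts one would add. However, your final paragraph misdiagnoses the tension you correctly sense. The crux is not whether $\ker(\Phi,\omega)\cap TM=0$ depends on which Lagrangian is taken as the target; that condition is indeed insensitive to the choice. The problem is that if the morphism genuinely targeted $(N,F,\chi)$, then $F$ would be the \emph{target Dirac structure}, and the backward image of the target Dirac structure under a strong Dirac morphism recovers the source Dirac structure: one would get $(\Phi,\omega)^{!}F=TM$, whose pure spinor is a degree-zero form with vanishing top part, contradicting the volume-form conclusion. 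For the statement to be correct, and to match both \cite[Prop.1.15-(c)]{[AMB09]} and the paper's own application (where the Dirac morphism targets $(G\kappa,E_G^\kappa,\eta)$ and one pulls back the spinor $\psi_G^\kappa$ of the \emph{complement} $F_G^\kappa$), the target must be $(N,E,\chi)$; the ``$F$'' in the morphism's codomain is a typographical slip. Once that is corrected, your proof is correct and coincides in substance with the paper's.
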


In the context of a tq-Hamiltonian $G$-manifold $(M,\omega,\Phi)$,
this proposition states that the Dirac morphism $(\Phi,\omega):(M,TM,0)\dashrightarrow(G\kappa,E_{G}^{\kappa},\eta)$
and the Lagrangian splitting $\mathbb{T}G=E_{G}^{\kappa}\oplus F_{G}^{\kappa}$
give rise to a volume form $\left(e^{\omega}\Phi^{\ast}\psi_{G}^{\kappa}\right)_{[\mathrm{top}]}$
on $M$, with $\psi_{G}^{\kappa}\in\Omega(G)$ a pure spinor defining
the Lagrangian subbundle $F_{G}^{\kappa}$. The next subsection addresses
the construction and the properties of a distinguished $\psi_{G}^{\kappa}$,
as well as its links to a certain pure spinor defining the Gauss-Dirac
structure $\widehat{F}_{G}^{\kappa}$.

\subsection{Pure spinors on Lie groups\label{Subsec_Dirac_Str_Pure_Spinors}}

\subsubsection{The trivialization $G\times\mathrm{Cl}(\mathfrak{g})\protect\cong\wedge T^{\ast}G$}

The bilinear form $B$ on $\mathfrak{g}$ induces an isometry between
$(\mathfrak{g}\oplus\mathfrak{g}^{\ast},\langle\cdot,\cdot\rangle)$
and $(\mathfrak{d},B_{\mathfrak{d}})$ \cite[\S 4.1]{[AMB09]}. This
identification realizes $\mathrm{Cl}(\mathfrak{g})$ as a spinor module
for $\mathrm{Cl}(\mathfrak{d})$, where the isomorphism $\varrho^{\mathrm{Cl}}:\mathrm{Cl}(\mathfrak{d})\to\mathrm{End}(\mathrm{Cl}(\mathfrak{g}))$
is given by:
\begin{equation}
\varrho^{\mathrm{Cl}}(\xi\oplus\zeta)x=\xi\cdot x-(-1)^{|x||\zeta|}x\cdot\zeta,\ \ \forall\xi\oplus\zeta\in\mathfrak{d},x\in\mathrm{Cl}(\mathfrak{g}).\label{Eq_Action_Cl-g_Clif}
\end{equation}
Next, by fixing a lift $\tau:G\to\mathrm{Pin}(\mathfrak{g})$ of $\mathrm{Ad}:G\to\mathrm{O}(\mathfrak{g})$
such that for any orthonormal basis $\{v_{i}\}\subset\mathfrak{g}$:
\[
\tau(\xi)=\tfrac{d}{dt}\left(\tau(e^{t\xi})\right)|_{t=0}=-\sum_{i>j}B([\xi,v_{i}],v_{j})\in\mathrm{Cl}(\mathfrak{g}),\ \ \forall\xi\in\mathfrak{g},
\]
we obtain the following action of $G\times G$ on $\mathrm{Cl}(\mathfrak{g})$:
\begin{equation}
\mathcal{A}^{\mathrm{Cl}}(a,b)x=\tau(a)x\tau(b^{-1}),\ \ \forall(a,b)\in G\times G,x\in\mathrm{Cl}(\mathfrak{g}),\label{Eq_Action_Cl-g_Double}
\end{equation}
which extends the adjoint action of $G$ on $\mathrm{Cl}(\mathfrak{g})$
\cite[eq.(76)]{[AMB09]}.

Now fix a generator $\mathrm{vol}_{\mathfrak{g}}\in\wedge^{[\mathrm{top}]}\mathfrak{g}$
with dual $\mathrm{vol}_{\mathfrak{g}^{\ast}}\in\wedge^{[\mathrm{top}]}\mathfrak{g}^{\ast}$
such that $\iota(\mathrm{vol}_{\mathfrak{g}^{\ast}}^{\intercal})\mathrm{vol}_{\mathfrak{g}}=1$.
In light of the previous paragraph, define the trivialization $\mathcal{R}:G\times\mathrm{Cl}(\mathfrak{g})\to\wedge T^{\ast}G$
by the prescription:
\begin{equation}
\mathcal{R}(x)|_{g}=(q\circ\star)^{-1}\left(x\tau(g)\right),\ \ \forall(g,x)\in G\times\mathrm{Cl}(\mathfrak{g}),\label{Eq_Triv_Map_R}
\end{equation}
where $q:\wedge\mathfrak{g}\to\mathrm{Cl}(\mathfrak{g})$ denotes
the quantization map, and where $\star$ denotes the isomorphism:
\[
\star:\wedge\mathfrak{g}^{\ast}\longrightarrow\wedge\mathfrak{g},\ \phi\longmapsto\iota(\phi)\mathrm{vol}_{\mathfrak{g}}.
\]
The upshot of these constructions is that $\mathcal{R}$ gives a transparent
description of the pure spinors defining Lagrangian subbundles of
$\mathbb{T}G$. The properties that we need are the following \cite[Prop.4.2]{[AMB09]}:
\begin{prop}
\label{Prop_Properties_mathcal_R}Consider the trivializations $\mathsf{s}:G\times\mathfrak{d}\to\mathbb{T}G$
and $\mathcal{R}:G\times\mathrm{Cl}(\mathfrak{g})\to\wedge T^{\ast}G$
given by equations (\ref{Eq_Triv_map_TTG}) and (\ref{Eq_Triv_Map_R})
respectively. One has that:\begin{enumerate}

\item $\mathcal{R}$ intertwines the Clifford actions (\ref{Eq_Action_TTM_Clif})
and (\ref{Eq_Action_Cl-g_Clif}):
\begin{equation}
\mathcal{R}\left(\varrho^{\mathrm{Cl}}(\xi\oplus\zeta)\cdot x\right)=\varrho(\mathsf{s}(\xi\oplus\zeta))\cdot\mathcal{R}(x),\ \ \forall\xi\oplus\zeta\in\mathfrak{d},x\in\mathrm{Cl}(\mathfrak{g}).\label{Eq_R_Cliff}
\end{equation}

\item $\mathcal{R}$ intertwines the actions (\ref{Eq_Action_G_Double})
and (\ref{Eq_Action_Cl-g_Double}) of $G\times G$:
\begin{equation}
\left(\mathcal{A}\left(a^{-1},b^{-1}\right)^{\ast}\mathcal{R}(x)\right)_{g}=(-1)^{|a|(|g|+|x|)}\mathcal{R}\left(\mathcal{A}^{\mathrm{Cl}}(a,b)\cdot x\right)_{g},\ \ \forall(a,b)\in G\times G,x\in\mathrm{Cl}(\mathfrak{g}),\label{Eq_R_Equivariance}
\end{equation}
where $|g|:=|\tau(g)|$ for all $g\in G$.

\end{enumerate}
\end{prop}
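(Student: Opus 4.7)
The plan is to follow closely the proof of \cite[Prop.4.2]{[AMB09]}: since the trivializations $\mathsf{s}$ and $\mathcal{R}$ defined in (\ref{Eq_Triv_map_TTG}) and (\ref{Eq_Triv_Map_R}) are given by formulas identical to those of that reference (no twisting automorphism $\kappa\in\mathrm{Aut}(G)$ enters either map), the present statement is, strictly speaking, a restatement of the untwisted result adapted to our notation. I would reproduce the argument as outlined below.

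For part (1), by bilinearity it suffices to check the intertwining identity on generators of the form $\xi\oplus 0$ and $0\oplus\zeta$ of $\mathfrak{d}$, and the identity itself is pointwise at each $g\in G$. Equation (\ref{Eq_Action_Cl-g_Clif}) gives $\varrho^{\mathrm{Cl}}(\xi\oplus 0)x=\xi\cdot x$ (Clifford left multiplication) and $\varrho^{\mathrm{Cl}}(0\oplus\zeta)x=-(-1)^{|x||\zeta|}x\cdot\zeta$ (graded Clifford right multiplication), while (\ref{Eq_Action_TTM_Clif}) combined with (\ref{Eq_Triv_map_TTG}) give $\varrho(\mathsf{s}(\xi\oplus 0))|_g=\iota_{\xi^L_g}+\tfrac{1}{2}(\theta^L_g\cdot\xi)\wedge$ and $\varrho(\mathsf{s}(0\oplus\zeta))|_g=-\iota_{\zeta^R_g}+\tfrac{1}{2}(\theta^R_g\cdot\zeta)\wedge$ on $\wedge T^\ast G|_g$. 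The central computational input is the standard Clifford--Koszul identity stating that, under the composition $q\circ\star$, Clifford left multiplication by an element of $\mathfrak{g}$ corresponds to a contraction-plus-wedge operator on $\wedge\mathfrak{g}^\ast$; combined with the conjugation identity $\tau(g)\xi\tau(g)^{-1}=\mathrm{Ad}_g(\xi)$ in $\mathrm{Cl}(\mathfrak{g})$, this is precisely what converts the left-invariant framing appearing on the right of (\ref{Eq_Triv_Map_R}) into the mix of left- and right-invariant Maurer--Cartan forms visible in $\mathsf{s}$.

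For part (2), I would compute $\bigl(\mathcal{A}(a^{-1},b^{-1})^\ast\mathcal{R}(x)\bigr)_g=\mathcal{R}(x)|_{b^{-1}ga}$ directly from (\ref{Eq_Triv_Map_R}), obtaining $(q\circ\star)^{-1}\bigl(x\,\tau(b^{-1}ga)\bigr)$. Using that $\tau:G\to\mathrm{Pin}(\mathfrak{g})$ is a lift of $\mathrm{Ad}$, one has $\tau(b^{-1}ga)=\tau(b^{-1})\tau(g)\tau(a)$; moving $\tau(b^{-1})$ to the left past $x$ and $\tau(a)$ across the isomorphism $(q\circ\star)^{-1}$ (or equivalently, pulling it past the Hodge star and the quantization map) produces the total Koszul sign $(-1)^{|a|(|g|+|x|)}$ and repackages the expression as $\mathcal{R}(\tau(a)\,x\,\tau(b^{-1}))_g=\mathcal{R}(\mathcal{A}^{\mathrm{Cl}}(a,b)\,x)_g$. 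An infinitesimal check (i.e. differentiating at $(a,b)=(e,e)$ in the diagonal direction) reduces this equivariance to the identity already verified in part (1).

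The main obstacle will be the careful bookkeeping of the $\mathbb{Z}_2$-graded signs, since $\tau$ is only a projective lift of $\mathrm{Ad}$ to the Pin group and the parities $|\tau(g)|=|g|$ need not vanish. Once these signs are tracked correctly, both parts reduce to standard manipulations in the Clifford algebra of a quadratic Lie algebra, and no new ingredient beyond the argument of \cite[Prop.4.2]{[AMB09]} is required; this is why the proposition carries over to our setup unchanged.
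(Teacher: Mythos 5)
Your overall identification is correct: the paper gives no proof of this proposition, stating it verbatim as \cite[Prop.4.2]{[AMB09]} and leaving the argument to that reference, and your observation that no twisting automorphism enters the definitions of $\mathsf{s}$ or $\mathcal{R}$ (so the untwisted result applies unchanged) is exactly the justification the paper implicitly relies on.

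Two imprecisions in your sketch for part (2) are worth flagging. First, the Clifford manipulation you describe is off: in comparing $x\,\tau(b^{-1})\,\tau(g)\,\tau(a)$ with $\tau(a)\,x\,\tau(b^{-1})\,\tau(g)$, it is $\tau(a)$ that must be transported from the far right to the far left --- $\tau(b^{-1})$ never leaves its position between $x$ and $\tau(g)$ --- and this transport (together with the behavior of $q\circ\star$ under right translation) is what produces the sign $(-1)^{|a|(|g|+|x|)}$. Second, the equation $\bigl(\mathcal{A}(a^{-1},b^{-1})^{\ast}\mathcal{R}(x)\bigr)_g = \mathcal{R}(x)|_{b^{-1}ga}$ is not well-typed as stated: the left side is a form at $g$ and the right side a form at $b^{-1}ga$. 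One must actually unwind the pullback through the derivative of $g\mapsto b^{-1}ga$, which factors as $L_{b^{-1}}\circ R_{a}$, and only then express the result in the trivialization; this step --- not the Clifford sign-tracking --- is where the specific structure of $\mathcal{R}$ (the fact that right Clifford multiplication by $\tau(g)$ encodes left translation on $G$) does the real work. Neither point is fatal, but the bookkeeping is less routine than ``a restatement'' suggests.
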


\subsubsection{The Cartan-Dirac spinor and Liouville form}

For the remainder of this section, we assume that we have a fixed
generator $\mathrm{vol}_{\mathfrak{g}}\in\wedge^{[\mathrm{top}]}\mathfrak{g}$,
as well as a fixed automorphism $\kappa\in\mathrm{Aut}(G)$. Using
the same notation for the differential $\kappa\in\mathrm{Aut}(\mathfrak{g})$,
the latter can be viewed as an element of $\mathrm{O}(\mathfrak{g})$,
and we fix a lift $\tau(\kappa)\in\mathrm{Pin}(\mathfrak{g})$, such
that:
\[
\tau(\kappa)x\tau(\kappa^{-1})=(-1)^{|\tau(\kappa)||x|}\kappa(x),\ \ \forall x\in\mathrm{Cl}(\mathfrak{g}).
\]

As seen in the previous section, we have a Lagrangian splitting $\mathfrak{d}=\mathfrak{g}_{\Delta}^{\kappa}\oplus\mathfrak{g}_{\Delta^{-}}^{\kappa}$,
where the $\kappa$-twisted diagonal and anti-diagonal are given by:
\[
\mathfrak{g}_{\Delta}^{\kappa}=\left\{ \kappa(\xi)\oplus\xi\ \big|\ \xi\in\mathfrak{g}\right\} ,\ \ \mathfrak{g}_{\Delta^{-}}^{\kappa}=\left\{ \kappa(\xi)\oplus-\xi\ \big|\ \xi\in\mathfrak{g}\right\} .
\]
By direct computation, it is easily verified that these Lagrangians
are respectively defined by the following pure spinors in $\mathrm{Cl}(\mathfrak{g})$:
\begin{equation}
x_{\Delta}^{\kappa}:=\tau(\kappa),\ \ x_{\Delta^{-}}^{\kappa}:=q(\mathrm{vol}_{\mathfrak{g}})\tau(\kappa).\label{Eq_Spinors_Twisted_Diag}
\end{equation}
By equation (\ref{Eq_R_Cliff}) in Prop. \ref{Prop_Properties_mathcal_R},
the Lagrangian subbundles $E_{G}^{\kappa}=\mathsf{s}(G\times\mathfrak{g}_{\Delta}^{\kappa})$
and $F_{G}^{\kappa}=\mathsf{s}(G\times\mathfrak{g}_{\Delta^{-}}^{\kappa})$
are thus defined by the pure spinors:
\begin{equation}
(\phi_{G}^{\kappa})_{g}=\mathcal{R}\left(x_{\Delta}^{\kappa}\tau(\kappa^{-1}(g))\right),\ \ (\psi_{G}^{\kappa})_{g}=\mathcal{R}\left(x_{\Delta^{-}}^{\kappa}\tau(\kappa^{-1}(g))\right),\label{Eq_Cartan-Dirac_Spinor_Psi_kappa}
\end{equation}
in $\Omega(G)$. By equation (\ref{Eq_R_Equivariance}), these forms
behave as follows under $\kappa$-twisted conjugation:
\[
\left((\mathrm{Ad}_{a}^{\kappa})^{\ast}\phi_{G}^{\kappa}\right)|_{g}=(-1)^{|a|(|g|+|x_{\Delta}^{\kappa}|)}(\phi_{G}^{\kappa})_{g},\ \ \left((\mathrm{Ad}_{a}^{\kappa})^{\ast}\psi_{G}^{\kappa}\right)|_{g}=(-1)^{|a|(|g|+|x_{\Delta^{-}}^{\kappa}|)}(\psi_{G}^{\kappa})_{g},\ \ \forall a,g\in G,
\]
In the particular case that $G$ is connected, \textit{the pure spinors
$\phi_{G}^{\kappa},\psi_{G}^{\kappa}\in\Omega(G)$ are $\mathrm{Ad}_{G}^{\kappa}$-invariant}.

Proposition \ref{Prop_DiracMorph_VolForm} and the last paragraph
justify the following definition:
\begin{defn}
Let $(M,\omega,\Phi)$ be a $G\kappa$-valued tq-Hamiltonian manifold.
The associated \textbf{Liouville form} on $M$ is defined to be the
$G$-invariant volume form given by:
\[
\Lambda_{M}:=\left(e^{\omega}\Phi^{\ast}\psi_{G}^{\kappa}\right)_{[\mathrm{top}]}\in\Omega^{[\mathrm{top}]}(M)^{G}.
\]
\end{defn}

\begin{rem}
Although the trivialization $\mathcal{R}:G\times\mathrm{Cl}(\mathfrak{g})\to\wedge T^{\ast}G$
depends on the choice of the generator $\mathrm{vol}_{\mathfrak{g}}\in\wedge^{[\mathrm{top}]}\mathfrak{g}$,
the Liouville form $\Lambda_{M}$ is unaffected by this choice \cite[Rk.4.5-(a)]{[AMB09]}.
Indeed, replacing $\mathrm{vol}_{\mathfrak{g}}$ by $\lambda\mathrm{vol}_{\mathfrak{g}}$
with $\lambda\ne0$, the isomorphism $\star^{-1}:\wedge\mathfrak{g}\to\wedge\mathfrak{g}^{\ast}$
replaces $\mathcal{R}$ by $\lambda^{-1}\mathcal{R}$, but this does
not affect $\psi_{G}^{\kappa}=\mathcal{R}\big(q(\mathrm{vol}_{\mathfrak{g}})\tau(\kappa)\big)$.
\end{rem}

For the sake of completeness, we give explicit formulas for the pure
spinors $x_{\Delta}^{\kappa},x_{\Delta^{-}}^{\kappa}\in\mathrm{Cl}(\mathfrak{g})$.
Suppose for the remainder of this part that $G$ is a compact 1-connected
simple Lie group, and that $\kappa\in\mathrm{Out}(G)$ is induced
by a non-trivial Dynkin diagram automorphism. To formulate the next
proposition, we employ the following notation:\begin{itemize}

\item[-] For any subspace $\mathfrak{a}\subset\mathfrak{g}$, denote
by $\mathrm{vol}_{\mathfrak{a}}\in\wedge^{[\mathrm{top}]}\mathfrak{a}$
the generator induced by the orientation on $\mathfrak{g}$, with
dual form $\mathrm{vol}_{\mathfrak{a}^{\ast}}\in\wedge^{[\mathrm{top}]}\mathfrak{a}^{\ast}$
such that $\iota(\mathrm{vol}_{\mathfrak{a}}^{\intercal})\mathrm{vol}_{\mathfrak{a}^{\ast}}=1$.
For a subgroup $A\subseteq G$ with $\mathfrak{a}=\mathrm{Lie}A$,
denote by $d\mathrm{vol}_{A}\in\Omega^{[\mathrm{top]}}(A)$ the Riemannian
volume form such that $(d\mathrm{vol}_{A})|_{e}=\mathrm{vol}_{\mathfrak{a}^{\ast}}$.

\item[-] Recall that $T_{\kappa}\subset T$ is the image of $T\to T$,
$t\mapsto t\kappa(t^{-1})$, with Lie algebra $\mathfrak{t}_{\kappa}\subset\mathfrak{t}$.
We have that $|T^{\kappa}\cap T_{\kappa}|=2^{\dim\mathfrak{t}_{\kappa}}$
for $|\kappa|=2$, and that $|T^{\kappa}\cap T_{\kappa}|=3$ for $|\kappa|=3$.

\item[-] For $G$ of type $D_{4}$ and $|\kappa|=3$, fix an orthonormal
basis $\{a_{i},b_{i}\}_{i=1}^{7}\in(\mathfrak{g}^{\kappa})^{\perp}$
such that:
\[
\ker\left(\kappa-e^{\pm\mathsf{i}\frac{2\pi}{3}}\right)=\mathrm{Span}_{\mathbb{C}}\{-a_{i}\pm b_{i}\}_{i=1}^{7},\ \ \mathfrak{t}_{\kappa}=\mathrm{Span}_{\mathbb{R}}\{a_{1},b_{1}\}.
\]

\end{itemize} We can now state:
\begin{prop}
\label{Prop_Pure_Spinors_kappa_diagonals}Let $G$ be compact 1-connected
and simple, and let $\kappa\in\mathrm{Out}(G)$. A pure spinor $x_{\Delta}^{\kappa}\in\mathrm{Cl}(\mathfrak{g})$
defining the twisted diagonal $\mathfrak{g}_{\Delta}^{\kappa}\subset\mathfrak{d}$
is given by:
\begin{eqnarray*}
x_{\Delta}^{\kappa} & = & \begin{cases}
2^{\frac{1}{2}\dim(\mathfrak{g}^{\kappa})^{\perp}}q(\mathrm{vol}_{(\mathfrak{g}^{\kappa})^{\perp}}^{\intercal}), & \mbox{for }|\kappa|=2,\\
2^{-\frac{1}{2}\dim(\mathfrak{g}^{\kappa})^{\perp}}q\left(e^{-2\sqrt{3}\sum_{j}a_{j}\wedge b_{j}}\right), & \mbox{for }|\kappa|=3.
\end{cases}
\end{eqnarray*}
A pure spinor $x_{\Delta^{-}}^{\kappa}\in\mathrm{Cl}(\mathfrak{g})$
defining the twisted anti-diagonal $\mathfrak{g}_{\Delta^{-}}^{\kappa}\subset\mathfrak{d}$
is given by:

\[
x_{\Delta^{-}}^{\kappa}=\begin{cases}
2^{-\frac{1}{2}\dim(\mathfrak{g}^{\kappa})^{\perp}}q(\mathrm{vol}_{\mathfrak{g}^{\kappa}}), & \text{for }|\kappa|=2;\\
(\tfrac{\sqrt{3}}{4})^{\frac{1}{2}\dim(\mathfrak{g}^{\kappa})^{\perp}}q(e^{\frac{2}{\sqrt{3}}\sum_{j}a_{j}\wedge b_{j}}\mathrm{vol}_{\mathfrak{g}^{\kappa}}), & \text{for }|\kappa|=3.
\end{cases}
\]
\end{prop}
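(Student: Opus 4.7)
The plan is to verify each formula by a two-step procedure:\ first, show that the proposed element $x\in\mathrm{Cl}(\mathfrak{g})$ has the correct Lagrangian as its null space under the Clifford action $\varrho^{\mathrm{Cl}}$ of eq.~(\ref{Eq_Action_Cl-g_Clif}); and second, pin down the overall scalar factor by matching with the normalization of the lift $\tau\colon G\to\mathrm{Pin}(\mathfrak{g})$ fixed before eq.~(\ref{Eq_Action_Cl-g_Double}). Step one alone identifies $x$ up to a nonzero scalar, since the pure spinors defining a fixed Lagrangian form a line.

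For $|\kappa|=2$, I would decompose $\mathfrak{g}=\mathfrak{g}^{\kappa}\oplus(\mathfrak{g}^{\kappa})^{\perp}$, pick an orthonormal basis $\{v_{1},\dots,v_{k}\}$ of $(\mathfrak{g}^{\kappa})^{\perp}$ with $k=\dim(\mathfrak{g}^{\kappa})^{\perp}$, and set $\omega:=v_{1}v_{2}\cdots v_{k}=q(\mathrm{vol}_{(\mathfrak{g}^{\kappa})^{\perp}})\in\mathrm{Cl}(\mathfrak{g})$. For $\xi\in\mathfrak{g}^{\kappa}$ one has $\xi\omega=(-1)^{k}\omega\xi$ from anti-commuting with each basis vector, so $\varrho^{\mathrm{Cl}}(\xi\oplus\xi)\omega=\xi\omega-(-1)^{k}\omega\xi=0$; for $\xi=v_{j}$ a short calculation gives $v_{j}\omega=(-1)^{k+1}\omega v_{j}$, whence $\varrho^{\mathrm{Cl}}(-v_{j}\oplus v_{j})\omega=-v_{j}\omega-(-1)^{k}\omega v_{j}=0$. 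By linearity the null space of $\omega$ contains the Lagrangian $\mathfrak{g}_{\Delta}^{\kappa}$ and therefore equals it, so $q(\mathrm{vol}_{(\mathfrak{g}^{\kappa})^{\perp}}^{\intercal})$ (which differs from $\omega$ only by the sign $(-1)^{k(k-1)/2}$) is a pure spinor for $\mathfrak{g}_{\Delta}^{\kappa}$. The prefactor $2^{k/2}$ would then be determined by matching the tangent direction of the candidate spinor along a path $\kappa_{t}$ converging to $\kappa$ against the explicit Lie-algebra section supplied before eq.~(\ref{Eq_Action_Cl-g_Double}), or equivalently by the convention needed to make the evaluations of $\phi_{G}^{\kappa},\psi_{G}^{\kappa}$ at torus points in Prop.~\ref{Prop_Expressions_Gauss-Dirac_Spinor} consistent.

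For $|\kappa|=3$, I would decompose $(\mathfrak{g}^{\kappa})^{\perp}=\bigoplus_{j=1}^{7}\mathrm{Span}_{\mathbb{R}}(a_{j},b_{j})$ into $\kappa$-invariant $2$-planes on each of which $\kappa$ acts as a rotation of angle $2\pi/3$. Writing $\kappa=\exp\bigl(\sum_{j}\theta J_{j}\bigr)$ with $J_{j}\in\mathfrak{so}(\mathfrak{g})$ generating rotations in $\mathrm{Span}(a_{j},b_{j})$ and $\theta$ the corresponding angle, the spin lift under $\mathfrak{spin}(\mathfrak{g})\cong q(\wedge^{2}\mathfrak{g})$ sends $J_{j}$ to a constant multiple of $a_{j}b_{j}\in\mathrm{Cl}(\mathfrak{g})$, so $\tau(\kappa)=\exp\bigl(c\sum_{j}a_{j}b_{j}\bigr)$ for an explicit $c$. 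Since $(a_{j}b_{j})^{2}=-1$ and the $a_{j}b_{j}$ pairwise commute, this factors as $\prod_{j=1}^{7}\bigl(\cos c+\sin c\,a_{j}b_{j}\bigr)$, and applying $q^{-1}$---which is multiplicative on such products of commuting $q$-images of decomposable bivectors from orthogonal $2$-planes---converts this finite product into a single exponential in the exterior algebra, namely $C\cdot e^{c'\sum_{j}a_{j}\wedge b_{j}}$ for explicit constants $C,c'$. Matching against the stated formula pins down $c'=-2\sqrt{3}$ and $C=2^{-k/2}$ with $k=14$, and the null-space verification for $x_{\Delta}^{\kappa}$ is a direct Clifford computation along the lines of the $|\kappa|=2$ case.

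The expressions for $x_{\Delta^{-}}^{\kappa}$ follow from the identity $x_{\Delta^{-}}^{\kappa}=q(\mathrm{vol}_{\mathfrak{g}})\tau(\kappa)$ of eq.~(\ref{Eq_Spinors_Twisted_Diag}) by factoring $q(\mathrm{vol}_{\mathfrak{g}})=\pm\, q(\mathrm{vol}_{\mathfrak{g}^{\kappa}})\,q(\mathrm{vol}_{(\mathfrak{g}^{\kappa})^{\perp}})$ along $\mathfrak{g}=\mathfrak{g}^{\kappa}\oplus(\mathfrak{g}^{\kappa})^{\perp}$ and absorbing the second factor into the previously computed $\tau(\kappa)$. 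For $|\kappa|=2$, the identity $q(\mathrm{vol}_{(\mathfrak{g}^{\kappa})^{\perp}})\,q(\mathrm{vol}_{(\mathfrak{g}^{\kappa})^{\perp}}^{\intercal})=\pm 1$ in $\mathrm{Cl}(\mathfrak{g})$ collapses the product to a scalar multiple of $q(\mathrm{vol}_{\mathfrak{g}^{\kappa}})$, yielding the stated $2^{-k/2}q(\mathrm{vol}_{\mathfrak{g}^{\kappa}})$; for $|\kappa|=3$, the same factorization together with the observation that $a_{j}b_{j}$ anti-commutes with $a_{j},b_{j}$ while commuting with $a_{i},b_{i}$ for $i\ne j$ produces the exponential $e^{(2/\sqrt{3})\sum_{j}a_{j}\wedge b_{j}}$ times $\mathrm{vol}_{\mathfrak{g}^{\kappa}}$, with prefactor $(\sqrt{3}/4)^{k/2}$. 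The main obstacle throughout is the careful bookkeeping of scalars:\ since $q$ is not multiplicative, the powers of $2$ and $\sqrt{3}$ must be reconciled by computing $q(e^{X})$ on commuting-bivector sums $X$ explicitly, which is the technical heart of the $|\kappa|=3$ case.
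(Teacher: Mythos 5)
Your null-space verification for $|\kappa|=2$ is correct (the signs in $\xi\omega=(-1)^{k}\omega\xi$ and $v_{j}\omega=(-1)^{k+1}\omega v_{j}$ check out under the paper's convention $\xi\zeta+\zeta\xi=B(\xi,\zeta)$), and your factorization of $q(e^{-2\sqrt{3}\sum_j a_j\wedge b_j})$ into $\prod_j(1-2\sqrt{3}a_jb_j)$ for $|\kappa|=3$ matches the paper's rewriting. But there is a genuine gap in the normalization step. The paper takes a shorter and cleaner route: by eq.~(\ref{Eq_Spinors_Twisted_Diag}), the normalized pure spinor defining $\mathfrak{g}_{\Delta}^{\kappa}$ is \emph{by definition} $\tau(\kappa)\in\mathrm{Pin}(\mathfrak{g})$, so one only needs to verify that the proposed formula satisfies the two $\mathrm{Pin}$ conditions $\tau(\kappa)^{\intercal}\tau(\kappa)=1$ and $\tau(\kappa)\xi\tau(\kappa^{-1})=(-1)^{\dim(\mathfrak{g}^{\kappa})^{\perp}}\kappa(\xi)$. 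The null-space check is not needed again, and — crucially — the unit-norm condition is exactly what pins down the scalar. Your alternative, matching the tangent direction along a path $\kappa_{t}$, fails when $|\kappa|=2$ and $\dim(\mathfrak{g}^{\kappa})^{\perp}$ is odd: then $\det\kappa=-1$, $\kappa\notin\mathrm{SO}(\mathfrak{g})$, and no such path to the identity exists (this happens already for $\mathfrak{g}=\mathfrak{su}(4)$, where $\dim(\mathfrak{g}^{\kappa})^{\perp}=5$). Your fallback — requiring consistency with Prop.~\ref{Prop_Expressions_Gauss-Dirac_Spinor} — is circular, since that proposition is derived downstream from this one.

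A second, smaller slip: in the $x_{\Delta^{-}}^{\kappa}$ computation for $|\kappa|=2$, the identity should be $q(\mathrm{vol}_{(\mathfrak{g}^{\kappa})^{\perp}})\,q(\mathrm{vol}_{(\mathfrak{g}^{\kappa})^{\perp}}^{\intercal})=\prod_{i}v_{i}^{2}=2^{-k}$, not $\pm 1$, since $v_{i}^{2}=\tfrac{1}{2}B(v_{i},v_{i})=\tfrac{1}{2}$ in the paper's Clifford convention; it is precisely this $2^{-k}$ combined with the $2^{k/2}$ from $x_{\Delta}^{\kappa}$ that produces the stated prefactor $2^{-k/2}$. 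Similarly, for $|\kappa|=3$ you should derive $c'$ from the spin lift of the rotation by $2\pi/3$ (i.e.\ from $\exp(\alpha\cdot 2a_jb_j)$ rotating by angle $2\alpha$, whence $\alpha=\pi/3$ and $\cos\alpha=\tfrac12$, $\sin\alpha=\tfrac{\sqrt3}{2}$); the phrase ``matching against the stated formula pins down $c'$'' is not a derivation. Once you replace the normalization argument with the $\mathrm{Pin}$ unit-norm condition and compute the scalars honestly, the rest of your outline goes through.
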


\noindent\emph{Outline of proof.} For $x_{\Delta}^{\kappa}=\tau(\kappa)$,
it suffices to directly check that the given expression yields a lift
of the differential $\kappa\in\mathrm{O}(\mathfrak{g})$ to $\mathrm{Pin}(\mathfrak{g})$,
i.e. that:
\[
\tau(\kappa)^{\intercal}\tau(\kappa)=1;\ \text{and}\ \tau(\kappa)\xi\tau(\kappa^{-1})=(-1)^{\dim(\mathfrak{g}^{\kappa})^{\perp}}\kappa(\xi),\ \ \forall\xi\in\mathfrak{g}.
\]
This is easily done with an orthonormal basis of $\mathfrak{g}$ when
$|\kappa|=2$, while for $|\kappa|=3$, the exponential is re-written
as a product:
\[
x_{\Delta}^{\kappa}=2^{-7}q(e^{-2\sqrt{3}\sum_{j}a_{j}\wedge b_{j}})=2^{-7}\prod_{i=1}^{7}(1-2\sqrt{3}a_{i}b_{i}).
\]
For the product $x_{\Delta^{-}}^{\kappa}=q(\mathrm{vol}_{\mathfrak{g}})\tau(\kappa)$,
one uses $q(\mathrm{vol}_{\mathfrak{g}})=q(\mathrm{vol}_{\mathfrak{g}^{\kappa}})q(\mathrm{vol}_{(\mathfrak{g}^{\kappa})^{\perp}})$,
as well as $q(\mathrm{vol}_{(\mathfrak{g}^{\kappa})^{\perp}})=\prod_{i}a_{i}b_{i}$
when $|\kappa|=3$.\qed
\begin{rem}
This proposition allows one to determine $\Lambda_{M}$ and $\psi_{G}^{\kappa}$
explicitly in certain cases. For instance, if $|\kappa|=2$, the approach
in the proof of \cite[Thm.3.1]{[AMW00]} leads to:
\[
(\psi_{G}^{\kappa})_{g}=\pm|T^{\kappa}\cap T_{\kappa}|^{-\frac{1}{2}}\cdot|\mathrm{det}_{\mathfrak{t}^{\perp}}^{\frac{1}{2}}\left(\tfrac{\mathrm{Ad}_{g}\kappa+1}{2}\right)|e^{-\frac{1}{2}B\left(\tfrac{\mathrm{Ad}_{g}\kappa-1}{\mathrm{Ad}_{g}\kappa+1}\theta_{g}^{L},\theta_{g}^{L}\right)}\wedge(d\mathrm{vol}_{T_{\kappa}})_{g},
\]
on the subset of $G$ where $(\mathrm{Ad}_{g}\kappa+1)$ is invertible.
One obtains a similar but more cumbersome expression when $|\kappa|=3$.
With $\kappa=1$, one recovers \cite[Prop.4.6]{[AMB09]}.
\end{rem}

\subsubsection{The Gauss-Dirac spinor }

Continuing with the notation above, let $G_{\mathbb{C}}$ be the complexification
of $G$, and let $\mathfrak{g}_{\mathbb{C}}=\mathfrak{n}_{+}\oplus\mathfrak{h}\oplus\mathfrak{n}_{-}$
be the triangular decomposition of its Lie algebra. The twisted Gauss-Dirac
structure is the image $\widehat{F}_{G}^{\kappa}=\mathsf{s}(G\times\mathfrak{s}^{\kappa})$,
where:
\[
\mathfrak{s}^{\kappa}=\left\{ (\xi_{+}+\kappa(\xi_{0}))\oplus(\xi_{-}-\xi_{0})\ \big|\ \xi_{\pm}\in\mathfrak{n}_{\pm},\xi_{0}\in\mathfrak{h}\right\} .
\]
After clarifying the relation between the subalgebra $\mathfrak{s}^{\kappa}\subset\mathfrak{d}$
and the twisted anti-diagonal $(\mathfrak{g}_{\mathbb{C}})_{\Delta^{-}}^{\kappa}$,
we describe the \textbf{Gauss-Dirac spinor} $\widehat{\psi}_{G_{\mathbb{C}}}^{\kappa}\in\Omega(G_{\mathbb{C}})$
defining $\widehat{F}_{G}^{\kappa}$, and discuss its properties relevant
to localization.

Let $\mathfrak{r}=\frac{1}{2}\sum_{\alpha\in\mathfrak{R}_{+}}e_{-\alpha}\wedge e_{\alpha}$
denote the \textbf{classical r-matrix} \cite[\S 3.6]{[AMB09]} , where
$\{e_{\alpha}\}_{\alpha\in\mathfrak{R}}\subset\mathfrak{n}_{+}\oplus\mathfrak{n}_{-}$
is a basis of root vectors satisfying $B(e_{\alpha},e_{\beta})=\delta_{\beta,-\alpha}$
and $e_{-\alpha}=\overline{e_{\alpha}}$. Letting $A^{-\mathfrak{r}}\in\mathrm{O}(\mathfrak{g}_{\mathbb{C}}\oplus\mathfrak{g}_{\mathbb{C}}^{\ast})$
denote the map $0\oplus\alpha\mapsto\iota_{\alpha}\mathfrak{r}\oplus\alpha$
\cite[\S 1.5]{[AMB09]}, the graph of $\mathfrak{r}\in\wedge^{2}\mathfrak{g}_{\mathbb{C}}$
in $\mathfrak{g}_{\mathbb{C}}\oplus\mathfrak{g}_{\mathbb{C}}^{\ast}$
is given by $\mathrm{Gr}_{\mathfrak{r}}=A^{-\mathfrak{r}}(0\oplus\mathfrak{g}_{\mathbb{C}}^{\ast})$.
Considering then the isometry:
\begin{equation}
\mathfrak{i}^{\kappa}:\mathfrak{g}_{\mathbb{C}}\oplus\mathfrak{g}_{\mathbb{C}}^{\ast}\longrightarrow\mathfrak{d}_{\mathbb{C}},\ \ \mathfrak{i}^{\kappa}(\xi\oplus\alpha)=\kappa\left(\xi+\tfrac{1}{2}B^{\sharp}\alpha\right)\oplus\left(\xi-\tfrac{1}{2}B^{\sharp}\alpha\right),\label{Eq_i_kappa_isometry}
\end{equation}
we have that \cite[Lem.3.16]{[AMB09]}:
\[
\mathfrak{s}^{\kappa}=\mathfrak{i}^{\kappa}(\mathrm{Gr}_{\mathfrak{r}}),\ (\mathfrak{g}_{\mathbb{C}})_{\Delta^{-}}^{\kappa}=\mathfrak{i}^{\kappa}(0\oplus\mathfrak{g}_{\mathbb{C}}^{\ast}),\ \text{and}\ (\mathfrak{g}_{\mathbb{C}})_{\Delta}^{\kappa}=\mathfrak{i}^{\kappa}(\mathfrak{g}_{\mathbb{C}}\oplus0).
\]
At the level of pure spinors defining Lagrangian subspaces, the present
setup leads to:
\begin{prop}
A pure spinor defining the Lagrangian subalgebra $\mathfrak{s}^{\kappa}\subset\mathfrak{d}_{\mathbb{C}}$
is given by:
\begin{equation}
x_{\mathfrak{s}^{\kappa}}:=\varrho^{\mathrm{Cl}}\left(e^{-(q\circ\mathfrak{i}^{\kappa})(\mathfrak{r})}\right)x_{\Delta^{-}}^{\kappa}\in\mathrm{Cl}(\mathfrak{g}_{\mathbb{C}}),\label{Eq_Spinor_Gauss-Dirac_Cl-g}
\end{equation}
where $x_{\Delta^{-}}^{\kappa}\in\mathrm{Cl}(\mathfrak{g}_{\mathbb{C}})$
is the pure spinor defining $(\mathfrak{g}_{\mathbb{C}})_{\Delta^{-}}^{\kappa}$.
For $\mathfrak{g}_{\mathbb{C}}$ simple and $\kappa\in\mathrm{Aut}(\mathfrak{g}_{\mathbb{C}})$
induced by a Dynkin diagram automorphism, one has that:
\begin{equation}
x_{\mathfrak{s}^{\kappa}}=\begin{cases}
2^{-\frac{1}{2}\dim\mathfrak{t}_{\kappa}}\left(\prod_{\alpha\in\mathfrak{R}_{+}}e_{\alpha}e_{-\alpha}\right)q(\mathrm{vol}_{\mathfrak{t}^{\kappa}}), & \text{for }|\kappa|=2;\\
\tfrac{\sqrt{3}}{4}\left(\prod_{\alpha\in\mathfrak{R}_{+}}e_{\alpha}e_{-\alpha}\right)q\left(e^{\frac{2}{\sqrt{3}}\mathrm{vol}_{\mathfrak{t}_{\kappa}}}\mathrm{vol}_{\mathfrak{t}^{\kappa}}\right), & \text{for }|\kappa|=3.
\end{cases}\label{Eq_Spinor_Gauss-Dirac_Cl-g_Dynkin}
\end{equation}
\end{prop}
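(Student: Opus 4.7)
The plan is to establish the identity (\ref{Eq_Spinor_Gauss-Dirac_Cl-g}) by an abstract pure-spinor argument, and then derive the explicit expressions in (\ref{Eq_Spinor_Gauss-Dirac_Cl-g_Dynkin}) by a direct Clifford-algebra computation in a root basis.

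For the first identity, I will use the standard behavior of pure spinors under bivector shifts. For any bivector $\mathfrak{r}\in\wedge^{2}\mathfrak{g}_{\mathbb{C}}$, the automorphism $A^{-\mathfrak{r}}$ sends the Lagrangian $0\oplus\mathfrak{g}_{\mathbb{C}}^{\ast}\subset\mathfrak{g}_{\mathbb{C}}\oplus\mathfrak{g}_{\mathbb{C}}^{\ast}$ (defined by the constant pure spinor $1$) to $\mathrm{Gr}_{\mathfrak{r}}$, and the corresponding pure spinor transforms by left Clifford multiplication by $\exp(-q(\mathfrak{r}))$; see \cite[\S 1.5]{[AMB09]}. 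Transporting this along the isometry $\mathfrak{i}^{\kappa}:\mathfrak{g}_{\mathbb{C}}\oplus\mathfrak{g}_{\mathbb{C}}^{\ast}\to\mathfrak{d}_{\mathbb{C}}$ and using that $\mathfrak{i}^{\kappa}(0\oplus\mathfrak{g}_{\mathbb{C}}^{\ast})=(\mathfrak{g}_{\mathbb{C}})_{\Delta^{-}}^{\kappa}$ is defined by $x_{\Delta^{-}}^{\kappa}$, one obtains (\ref{Eq_Spinor_Gauss-Dirac_Cl-g}), since the induced Clifford isomorphism $\mathrm{Cl}(\mathfrak{g}_{\mathbb{C}}\oplus\mathfrak{g}_{\mathbb{C}}^{\ast})\to\mathrm{Cl}(\mathfrak{d}_{\mathbb{C}})$ carries $q(\mathfrak{r})$ to $(q\circ\mathfrak{i}^{\kappa})(\mathfrak{r})$.

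For the explicit formula, I will expand $\mathfrak{r}=\tfrac{1}{2}\sum_{\alpha\in\mathfrak{R}_{+}}e_{-\alpha}\wedge e_{\alpha}$ and group its summands according to the $\kappa$-orbits on $\mathfrak{R}_{+}$. After applying $\mathfrak{i}^{\kappa}$, the various orbit contributions to $(q\circ\mathfrak{i}^{\kappa})(\mathfrak{r})\in\mathrm{Cl}(\mathfrak{d}_{\mathbb{C}})$ are pairwise $B_{\mathfrak{d}}$-orthogonal, so the Clifford exponential factorizes into a product of exponentials (one per orbit), which truncates at low order because the relevant generators are isotropic. One then computes the action on $x_{\Delta^{-}}^{\kappa}$ orbit-by-orbit via (\ref{Eq_Action_Cl-g_Clif}), using Proposition \ref{Prop_Pure_Spinors_kappa_diagonals} to plug in the explicit form of $x_{\Delta^{-}}^{\kappa}$. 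The effect on its $q(\mathrm{vol}_{\mathfrak{g}^{\kappa}})$ factor is to strip off the root-space portion of $\mathfrak{g}^{\kappa}$, leaving only $q(\mathrm{vol}_{\mathfrak{t}^{\kappa}})$ and producing the required $\prod_{\alpha\in\mathfrak{R}_{+}}e_{\alpha}e_{-\alpha}$ prefactor; the Cartan piece of $\mathfrak{r}$ contributes the remaining volume form (and its exponential for $|\kappa|=3$), while the numerical constants track through from the normalizations in Proposition \ref{Prop_Pure_Spinors_kappa_diagonals} and the factor $\tfrac{1}{2}$ in the definition of $\mathfrak{r}$.

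The main technical obstacle is the case $|\kappa|=3$, which arises only for $G$ of type $D_{4}$. There the nontrivial $\kappa$-orbits on $\mathfrak{R}_{+}$ have size $3$, and the corresponding summands of $(q\circ\mathfrak{i}^{\kappa})(\mathfrak{r})$ mix three distinct root vectors, producing cross-terms involving the complex eigenvectors in $\ker(\kappa-e^{\pm 2\pi\mathsf{i}/3})$. These must be carefully re-expressed in the fixed real basis $\{a_{j},b_{j}\}_{j=1}^{7}\subset(\mathfrak{g}^{\kappa})^{\perp}$ from Proposition \ref{Prop_Pure_Spinors_kappa_diagonals}; in particular, one must check that the Cartan piece coming from $\mathfrak{t}_{\kappa}=\mathrm{Span}_{\mathbb{R}}\{a_{1},b_{1}\}$ combines with the $q(e^{\frac{2}{\sqrt{3}}\sum_{j}a_{j}\wedge b_{j}})$ part of $x_{\Delta^{-}}^{\kappa}$ to produce the stated $q(e^{\frac{2}{\sqrt{3}}\mathrm{vol}_{\mathfrak{t}_{\kappa}}}\mathrm{vol}_{\mathfrak{t}^{\kappa}})$ factor, and that the overall normalization collapses to $\tfrac{\sqrt{3}}{4}$. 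This is the only genuinely delicate bookkeeping in the argument, but once the $|\kappa|=2$ case is handled, it reduces to a direct (if somewhat lengthy) computation.
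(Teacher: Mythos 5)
Your proposal follows essentially the same two-step route as the paper: for the abstract identity~(\ref{Eq_Spinor_Gauss-Dirac_Cl-g}), you use the bivector-shift behavior of pure spinors (that $\mathrm{Gr}_{\mathfrak{r}}=A^{-\mathfrak{r}}(0\oplus\mathfrak{g}_{\mathbb{C}}^{\ast})$ is defined by $e^{-\iota(\mathfrak{r})}\mathrm{vol}_{\mathfrak{g}^{\ast}}$) and transport along the isometry $\mathfrak{i}^{\kappa}$, which is precisely the paper's construction of the spinor-module isomorphism $R^{\kappa}:\wedge\mathfrak{g}_{\mathbb{C}}^{\ast}\to\mathrm{Cl}(\mathfrak{g}_{\mathbb{C}})$ sending $\mathrm{vol}_{\mathfrak{g}^{\ast}}\mapsto x_{\Delta^{-}}^{\kappa}$; and for the explicit formulas~(\ref{Eq_Spinor_Gauss-Dirac_Cl-g_Dynkin}), both treatments amount to a direct Clifford-algebra computation against the expressions of Proposition~\ref{Prop_Pure_Spinors_kappa_diagonals}. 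The one organizational difference is worth noting: you propose to split $(q\circ\mathfrak{i}^{\kappa})(\mathfrak{r})$ into contributions indexed by $\kappa$-orbits on $\mathfrak{R}_{+}$ and exploit that these generators all lie in the Lagrangian $(\mathfrak{g}_{\mathbb{C}})_{\Delta}^{\kappa}$ (hence pairwise anticommute and square to zero, so the Clifford exponential truncates and factorizes), whereas the paper instead supplies the streamlining identity
\[
\varrho^{\mathrm{Cl}}\bigl(\mathfrak{i^{\kappa}}(\xi\oplus0)\bigr)\circ q=q\circ\Bigl(\iota\bigl[B^{\flat}(\tfrac{\kappa+1}{2}\xi)\bigr]+(\kappa-1)\xi\wedge\,\cdot\,\Bigr),\qquad\forall\xi\in\mathfrak{g}_{\mathbb{C}},
\]
which converts the action on $\mathrm{Cl}(\mathfrak{g}_{\mathbb{C}})$ into contractions and exterior multiplications on $\wedge\mathfrak{g}_{\mathbb{C}}$. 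Both are legitimate bookkeeping devices; the paper's identity is arguably cleaner for reading off where $q(\mathrm{vol}_{\mathfrak{g}^{\kappa}})$ loses its root-space factors and where the prefactor $\prod_{\alpha\in\mathfrak{R}_{+}}e_{\alpha}e_{-\alpha}$ comes from, since it makes the $(\kappa+1)/2$-contraction explicit. One small imprecision in your write-up: $\mathfrak{r}=\tfrac{1}{2}\sum_{\alpha\in\mathfrak{R}_{+}}e_{-\alpha}\wedge e_{\alpha}$ has no Cartan summand, so ``the Cartan piece of $\mathfrak{r}$'' is not the source of the $q(\mathrm{vol}_{\mathfrak{t}^{\kappa}})$ and $q(e^{\frac{2}{\sqrt{3}}\mathrm{vol}_{\mathfrak{t}_{\kappa}}})$ factors; those survive unchanged from $x_{\Delta^{-}}^{\kappa}$ because the exponential acts trivially on the torus directions, and it is the root-space portion of $\mathrm{vol}_{\mathfrak{g}^{\kappa}}$ that gets replaced by $\prod_{\alpha\in\mathfrak{R}_{+}}e_{\alpha}e_{-\alpha}$. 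This does not affect the validity of the overall strategy.
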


\noindent\emph{Outline of proof.} Since $\mathfrak{g}_{\mathbb{C}}^{\ast}\subset\mathfrak{g}_{\mathbb{C}}\oplus\mathfrak{g}_{\mathbb{C}}^{\ast}$
is defined by $\mathrm{vol}_{\mathfrak{g}^{\ast}}\in\wedge^{[\mathrm{top}]}\mathfrak{g}_{\mathbb{C}}^{\ast}$,
and since $\mathrm{Gr}_{\mathfrak{r}}=A^{-\mathfrak{r}}(\mathfrak{g}_{\mathbb{C}}^{\ast})$,
the latter is defined by the pure spinor:
\[
\varrho(\widetilde{A}^{-\mathfrak{r}})\mathrm{vol}_{\mathfrak{g}^{\ast}}=\exp(-\iota(\mathfrak{r}))\mathrm{vol}_{\mathfrak{g}^{\ast}}\in\wedge\mathfrak{g}_{\mathbb{C}}^{\ast},
\]
where $\widetilde{A}^{-\mathfrak{r}}\in\mathrm{Pin}(\mathfrak{g}\oplus\mathfrak{g}^{\ast})_{\mathbb{C}}$
is the lift of $A^{-\mathfrak{r}}$ in the representation $\varrho:\mathrm{Cl}(\mathfrak{g}\oplus\mathfrak{g}^{\ast})_{\mathbb{C}}\to\mathrm{End}(\wedge\mathfrak{g}_{\mathbb{C}}^{\ast})$.
Next, using the isometry $\mathfrak{i}^{\kappa}$ of equation (\ref{Eq_i_kappa_isometry})
and the quantization maps, one constructs the unique isomorphism of
spinor modules $R^{\kappa}:\wedge\mathfrak{g}_{\mathbb{C}}^{\ast}\to\mathrm{Cl}(\mathfrak{g}_{\mathbb{C}})$
mapping $\mathrm{vol}_{\mathfrak{g}^{\ast}}$ to $x_{\Delta^{-}}^{\kappa}$,
and intertwining the Clifford actions of $\mathrm{Cl}(\mathfrak{g}\oplus\mathfrak{g}^{\ast})_{\mathbb{C}}\cong\mathrm{Cl}(\mathfrak{d}_{\mathbb{C}})$
via:
\[
R^{\kappa}\left(\varrho(\xi\oplus\alpha)\phi\right)=\varrho^{\mathrm{Cl}}\left(\mathfrak{i^{\kappa}}(\xi\oplus\alpha)\right)R^{\kappa}(\phi),\ \ \forall\phi\in\wedge\mathfrak{g}_{\mathbb{C}}^{\ast},\xi\oplus\alpha\in(\mathfrak{g}\oplus\mathfrak{g}^{\ast})_{\mathbb{C}}.
\]
Under the isomorphism $R^{\kappa}$, the counterpart of the lift $\widetilde{A}^{-\mathfrak{r}}$
is the element $e^{-(q\circ\mathfrak{i}^{\kappa})(\mathfrak{r})}\in\mathrm{Cl}(\mathfrak{d}_{\mathbb{C}})$,
which yields equation (\ref{Eq_Spinor_Gauss-Dirac_Cl-g}).

In the case of $\mathfrak{g}_{\mathbb{C}}$ simple and $\kappa\in\mathrm{Aut}(\mathfrak{g}_{\mathbb{C}})$
induced by a diagram automorphism, equation (\ref{Eq_Spinor_Gauss-Dirac_Cl-g_Dynkin})
follows from a direct computation. One uses the expressions of $x_{\Delta^{-}}^{\kappa}$
in Proposition \ref{Prop_Pure_Spinors_kappa_diagonals}, along with
the identity:
\[
\varrho^{\mathrm{Cl}}\left(\mathfrak{i^{\kappa}}(\xi\oplus0)\right)\circ q=q\circ\left(\iota\left[B^{\flat}(\tfrac{\kappa+1}{2}\xi)\right]+(\kappa-1)\xi\right),\ \ \forall\xi\in\mathfrak{g}_{\mathbb{C}},
\]
to compute the product with the exponential $\varrho^{\mathrm{Cl}}\left(e^{-(q\circ\mathfrak{i}^{\kappa})(\mathfrak{r})}\right)=\exp\varrho^{\mathrm{Cl}}\left(-(q\circ\mathfrak{i}^{\kappa})(\mathfrak{r})\right)$.\qed

\hfill
\begin{rem}
We omit the detailed construction of $R^{\kappa}:\wedge\mathfrak{g}^{\ast}\to\mathrm{Cl}(\mathfrak{g})$
above, since it would require a disgression that is not used elsewhere,
with additional notation. The idea is as follows. Using $B_{\mathfrak{d}}$
to identify $\mathfrak{g}_{\Delta^{-}}^{\kappa}$ with $(\mathfrak{g}_{\Delta}^{\kappa})^{\ast}$
in the splitting $\mathfrak{d}=\mathfrak{g}_{\Delta}^{\kappa}\oplus\mathfrak{g}_{\Delta^{-}}^{\kappa}$,
one constructs a first isomorphism of spinor modules $R_{1}^{\kappa}:\wedge\mathfrak{g}_{\Delta^{-}}^{\kappa}\to\mathrm{Cl}(\mathfrak{g})$.
On the other hand, the isometry of equation (\ref{Eq_i_kappa_isometry})
extends to isomorphisms $\mathfrak{i}^{\kappa}:\wedge(\mathfrak{g}\oplus\mathfrak{g}^{\ast})\to\wedge\mathfrak{d}$
and $\mathfrak{i}^{\kappa}:\wedge\mathfrak{g}^{\ast}\to\wedge\mathfrak{g}_{\Delta^{-}}^{\kappa}$,
and using the quantization maps for $\mathrm{Cl}(\mathfrak{g}\oplus\mathfrak{g}^{\ast})$
and $\mathrm{Cl}(\mathfrak{d})$, one constructs a second isomorphism
of spinor modules $R_{2}^{\kappa}:\wedge\mathfrak{g}^{\ast}\to\wedge\mathfrak{g}_{\Delta^{-}}^{\kappa}$
such that $R^{\kappa}=R_{1}^{\kappa}\circ R_{2}^{\kappa}$ gives the
desired properties.
\end{rem}

Turning to Lagrangian subbundles of $\mathbb{T}G_{\mathbb{C}}$, define
the trivialization $\mathsf{e}^{\kappa}:G\times\mathfrak{g}_{\mathbb{C}}\longrightarrow E_{G}^{\kappa}$
by:
\begin{equation}
\mathsf{e}^{\kappa}(\xi)|_{g}=\mathsf{s}\left(\mathfrak{i}^{\kappa}(\xi\oplus0)\right)|_{g},\ \ \forall(g,\xi)\in G\times\mathfrak{g}_{\mathbb{C}}.\label{Eq_e_kappa_trivialization}
\end{equation}
The twisted Gauss-Dirac structure $\widehat{F}_{G_{\mathbb{C}}}^{\kappa}$
arises as \textit{the image of the Lagrangian complement} $F_{G_{\mathbb{C}}}^{\kappa}$
\textit{under the orthogonal transformation} $A^{-\mathsf{e}^{\kappa}(\mathfrak{r})}\in\Gamma(\mathrm{O}(\mathbb{T}G_{\mathbb{C}}))$
\cite[Cor.3.17]{[AMB09]}. By Proposition \ref{Prop_Properties_mathcal_R}
and equation (\ref{Eq_Spinor_Gauss-Dirac_Cl-g}), the twisted Gauss-Dirac
spinor is hence given by:
\begin{equation}
(\widehat{\psi}_{G_{\mathbb{C}}}^{\kappa})_{g}:=\mathcal{R}\left(x_{\mathfrak{s}^{\kappa}}\tau\left(\kappa^{-1}(g)\right)\right),\ \ \forall g\in G_{\mathbb{C}}.\label{Eq_Gauss-Dirac_Spinor}
\end{equation}
Its main properties are as follows:
\begin{prop}
\label{Prop_TTG_pure_spinors_Gauss}Suppose $G$ is compact 1-connected
and simple with complexification $G_{\mathbb{C}}$, and let $\kappa\in\mathrm{Out}(G)$
be non-trivial. The twisted Gauss-Dirac spinor $\widehat{\psi}_{G_{\mathbb{C}}}^{\kappa}\in\Omega(G_{\mathbb{C}})$
satisfies the following properties:
\begin{enumerate}
\item \textbf{Invariance properties:} For all $a_{\pm}\in\exp(\mathfrak{n}_{\pm})$
and $t\in T_{\mathbb{C}}$, one has that: 
\[
\begin{array}{ccccc}
R_{a_{+}}^{\ast}\widehat{\psi}_{G_{\mathbb{C}}}^{\kappa} & = & L_{a_{-}}^{\ast}\widehat{\psi}_{G_{\mathbb{C}}}^{\kappa} & = & \widehat{\psi}_{G_{\mathbb{C}}}^{\kappa},\\
R_{t}^{\ast}\widehat{\psi}_{G_{\mathbb{C}}}^{\kappa} & = & L_{t^{-1}}^{\ast}\widehat{\psi}_{G_{\mathbb{C}}}^{\kappa} & = & t^{\rho}\widehat{\psi}_{G_{\mathbb{C}}}^{\kappa}.
\end{array}
\]
\item \textbf{Transformation by r-matrix:} The pure spinors $\widehat{\psi}_{G_{\mathbb{C}}}^{\kappa}$
and $\psi_{G_{\mathbb{C}}}^{\kappa}$ are related by the equation:
\[
\widehat{\psi}_{G_{\mathbb{C}}}^{\kappa}=\varrho\left(\exp\left(-\mathsf{e^{\kappa}}(\mathfrak{r})\right)\right)\cdot\psi_{G_{\mathbb{C}}}^{\kappa}.
\]
\item \textbf{Differential equations:} For any $\kappa$-invariant dominant
weight $\lambda\in(\Lambda_{+}^{\ast})^{\kappa}$, the spinor $\Delta_{\lambda}\widehat{\psi}_{G_{\mathbb{C}}}^{\kappa}$
satisfies the differential equation: 
\[
(d+\eta)\Delta_{\lambda}\widehat{\psi}_{G_{\mathbb{C}}}^{\kappa}=\varrho\big(\mathsf{e}^{\kappa}\big(2\pi iB^{\sharp}(\lambda+\rho)\big)\big)\cdot\Delta_{\lambda}\widehat{\psi}_{G_{\mathbb{C}}}^{\kappa},
\]
where $\Delta_{\lambda}$ is the spherical harmonic of equation (\ref{Eq_Spherical_Harmonic_lambda}),
and $\rho\in(\mathfrak{t}^{\kappa})^{\ast}$ the half-sum of positive
roots of $G$.
\end{enumerate}
\end{prop}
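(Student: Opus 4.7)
My plan is to address the three parts in the order (2), (1), (3): part (2) is essentially a formal translation between $\mathrm{Cl}(\mathfrak{g})$ and $\wedge T^*G$; part (1) reduces to Clifford-algebraic computations via the equivariance of $\mathcal{R}$; and part (3) builds on both through a Leibniz argument. For (2), I would apply $\mathcal{R}$ to both sides of the identity (\ref{Eq_Spinor_Gauss-Dirac_Cl-g}) (after inserting $\tau(\kappa^{-1}(g))$ on the right), and use the intertwining of Clifford actions (\ref{Eq_R_Cliff}) together with the relation $\mathsf{e}^\kappa = \mathsf{s} \circ \mathfrak{i}^\kappa|_{\mathfrak{g} \oplus 0}$ from (\ref{Eq_e_kappa_trivialization})---extended from vectors to bivectors via compatibility of the quantization maps for $\mathrm{Cl}(\mathfrak{d}_{\mathbb{C}})$ and $\mathrm{Cl}(\mathbb{T}G_{\mathbb{C}})$---to convert the exponential of $-(q \circ \mathfrak{i}^\kappa)(\mathfrak{r})$ acting on $x_{\Delta^-}^\kappa$ into the exponential of $-\mathsf{e}^\kappa(\mathfrak{r})$ acting on $\psi_{G_{\mathbb{C}}}^\kappa$.

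For part (1), I would feed the definition (\ref{Eq_Gauss-Dirac_Spinor}) into the equivariance identity (\ref{Eq_R_Equivariance}), which turns each pullback $R_c^*$, $L_c^*$ into a one-sided Clifford multiplication by $\tau(c)$ on $x_{\mathfrak{s}^\kappa}$. The four assertions then reduce to the Clifford identities $\tau(a_+) \cdot x_{\mathfrak{s}^\kappa} = x_{\mathfrak{s}^\kappa}$ and $x_{\mathfrak{s}^\kappa} \cdot \tau(a_-) = x_{\mathfrak{s}^\kappa}$ for $a_\pm \in \exp(\mathfrak{n}_\pm)$, together with $\tau(t) \cdot x_{\mathfrak{s}^\kappa} = t^\rho x_{\mathfrak{s}^\kappa} \tau(t)$ for $t \in T_{\mathbb{C}}$. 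The first two rest on the explicit form (\ref{Eq_Spinor_Gauss-Dirac_Cl-g_Dynkin}): since $\prod_{\alpha \in \mathfrak{R}_+} e_\alpha e_{-\alpha}$ already contains each $e_{\pm\alpha}$, expanding the exponential series of $\tau(a_\pm)$ annihilates every higher-order term via $e_{\pm\alpha}^2 = 0$. The $T_{\mathbb{C}}$-statement is a weight count based on $[H, e_{\pm\alpha}] = \pm \alpha(H) e_{\pm\alpha}$: the full product has total weight $0$ under $\mathrm{Ad}_t$, but distributing the factor $\tau(t)$ across it (rather than conjugating symmetrically) collects the positive part $t^\rho$ on one side.

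For part (3), I would first infinitesimalize (1): differentiating $L_{\exp(-sH)}^* \widehat{\psi}^\kappa = e^{2\pi i s \langle \rho, H\rangle} \widehat{\psi}^\kappa$ at $s = 0$ for $H \in \mathfrak{t}^\kappa$, and comparing with the general form $(d + \eta)\widehat{\psi}^\kappa = \varrho(\sigma) \widehat{\psi}^\kappa$ that any pure spinor defining an $\eta$-twisted Dirac structure must satisfy, identifies $\sigma = \mathsf{e}^\kappa(2\pi i B^\sharp \rho)$. Leibniz then gives $(d + \eta)(\Delta_\lambda \widehat{\psi}^\kappa) = d\Delta_\lambda \wedge \widehat{\psi}^\kappa + \Delta_\lambda (d + \eta) \widehat{\psi}^\kappa$, and since $\varrho(y) \widehat{\psi}^\kappa = 0$ for $y \in \widehat{F}^\kappa$, it suffices to analyze $d\Delta_\lambda$ modulo the Lagrangian complement $\widehat{F}^\kappa$ inside $\mathbb{T}G_{\mathbb{C}} = E^\kappa \oplus \widehat{F}^\kappa$. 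Using the highest-weight properties of $\Delta_\lambda$ (annihilation of $v_\lambda$ by $\mathfrak{n}_+$ and the scaling $\Delta_\lambda(g e^H) = e^{2\pi i \langle \lambda, H\rangle} \Delta_\lambda(g)$ for $H \in \mathfrak{h}$), one checks directly at $g = e$ from the pairing $\langle v_\lambda, \rho_\lambda(X) v_\lambda\rangle$ that the $E^\kappa$-component of $0 \oplus d\Delta_\lambda$ is $\mathsf{e}^\kappa(2\pi i B^\sharp \lambda)$, and extends this to general $g$ by the $T_{\mathbb{C}} \times \exp(\mathfrak{n}_\pm)$-equivariance established in (1). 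Summing the two contributions produces the stated weight $\lambda + \rho$.

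The main obstacle will be this final projection: unlike $(d+\eta)\widehat{\psi}^\kappa$, the form $d\Delta_\lambda$ is not built from $\mathsf{e}^\kappa$ by construction, so one has to decompose $0 \oplus d\Delta_\lambda \in \mathbb{T}G_{\mathbb{C}}$ along $E^\kappa \oplus \widehat{F}^\kappa$ and verify pointwise that its $E^\kappa$-part is the constant section $\mathsf{e}^\kappa(2\pi i B^\sharp \lambda)$. This is ultimately forced by the highest-weight annihilation $\rho_\lambda(\mathfrak{n}_+)v_\lambda = 0$ together with the equivariance of (1), but the bookkeeping of the Cartan 3-form $\eta$ through Leibniz must be handled carefully so that no spurious cubic terms appear.
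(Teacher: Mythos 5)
Your plan for parts (1) and (2) is sound and matches the paper's own outline: part (2) is exactly the translation of equation (\ref{Eq_Spinor_Gauss-Dirac_Cl-g}) through the intertwining property (\ref{Eq_R_Cliff}) of $\mathcal{R}$, and part (1) is exactly the equivariance property (\ref{Eq_R_Equivariance}) combined with the Clifford-algebraic identities for $x_{\mathfrak{s}^\kappa}$ that you describe. The paper's own outline cites precisely these two ingredients.

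For part (3), where the paper merely defers to the untwisted proof of \cite[Prop.4.18]{[AMB09]}, your Leibniz decomposition $(d+\eta)(\Delta_\lambda\widehat\psi^\kappa)=d\Delta_\lambda\wedge\widehat\psi^\kappa+\Delta_\lambda(d+\eta)\widehat\psi^\kappa$ has the right structure, but the base case $\lambda=0$ is asserted too quickly. Differentiating the invariances of (1) yields Lie derivative equations $\mathcal{L}_X\widehat\psi^\kappa=c\,\widehat\psi^\kappa$; these are not directly "the general form $(d+\eta)\widehat\psi^\kappa=\varrho(\sigma)\widehat\psi^\kappa$," and the identification of $\sigma$ is not a matter of comparison. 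To bridge the gap you need Cartan's magic formula together with the annihilation relation $\iota_X\widehat\psi^\kappa=-\alpha\wedge\widehat\psi^\kappa$ for $X\oplus\alpha\in\widehat F^\kappa$, which lets you compute $\langle y,\sigma\rangle$ for each $y\in\Gamma(\widehat F^\kappa)$ from the corresponding Lie derivative, and then recover $\sigma\in\Gamma(E^\kappa)$ by the perfect pairing. Moreover, restricting $H$ to $\mathfrak{t}^\kappa$ is insufficient: you must sweep out all of $\mathfrak{h}=\mathfrak{t}_{\mathbb C}$ and also invoke the $\exp(\mathfrak{n}_\pm)$-invariances to show the root-direction components of $\sigma$ vanish; $\rho$ landing in $(\mathfrak{t}^\ast)^\kappa$ is a consequence, not a permissible a priori restriction on the test directions. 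Finally, when decomposing $0\oplus d\Delta_\lambda$ along $E^\kappa\oplus\widehat F^\kappa$ at general $g$, the $E^\kappa$-component is $\Delta_\lambda(g)\,\mathsf{e}^\kappa(2\pi iB^\sharp\lambda)|_g$, not the constant $\mathsf{e}^\kappa(2\pi iB^\sharp\lambda)$; your appeal to the $T_{\mathbb C}\times\exp(\mathfrak{n}_\pm)$-equivariance of (1) is the correct fix, but the $\Delta_\lambda$ factor should be carried explicitly. None of these issues derails the strategy, but they are the substantive content that \cite[Prop.4.18]{[AMB09]} actually supplies, so your proposal is incomplete exactly where you point at the "main obstacle" and one step earlier.
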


\noindent\emph{Outline of proof.} The invariance properties follow
from Proposition \ref{Prop_Properties_mathcal_R}-(3), while the equation
in (2) follows from Proposition \ref{Prop_Properties_mathcal_R}-(1)
and equation (\ref{Eq_Spinor_Gauss-Dirac_Cl-g}). The proof for the
differential equation in (3) is the same as the proof of \cite[Prop.4.18]{[AMB09]}.\qed

\hfill

For the purposes of Duistermaat-Heckman localization, it is useful
in practice to have explicit expressions for $(\widehat{\psi}_{G_{\mathbb{C}}}^{\kappa})_{t}$
with $t\in T_{\mathbb{C}}$. Using the basis $\{e_{\alpha}\}_{\alpha\in\mathfrak{R}}\subset\mathfrak{n}_{+}\oplus\mathfrak{n}_{-}$
introduced above, let $(e^{\pm\alpha})^{L}:=B(\theta^{L},e_{\mp\alpha})$
for all $\alpha\in\mathfrak{R}_{+}$, and define the following 2-form
on $G_{\mathbb{C}}$:
\begin{equation}
\varpi:=\tfrac{1}{4}\sum_{\alpha\in\mathfrak{R}_{+}}(e^{\alpha})^{L}\wedge(e^{-\alpha})^{L}\in\Omega^{2}(G_{\mathbb{C}}).\label{Eq_varpi_Gauss_cell}
\end{equation}
Combining equation (\ref{Eq_Spinor_Gauss-Dirac_Cl-g_Dynkin}) with
Proposition \ref{Prop_TTG_pure_spinors_Gauss}-(1), and using the
equation for $\tau(t)$ given in the proof of \cite[Prop.4.1]{[AMW00]},
a direct computation of $\mathcal{R}(x_{\mathfrak{s}^{\kappa}}\tau(t))=(\star^{-1}\circ q^{-1})\left(x_{\mathfrak{s}^{\kappa}}\tau(t)\right)$
yields:
\begin{prop}
\label{Prop_Expressions_Gauss-Dirac_Spinor}With the assumptions of
Proposition \ref{Prop_TTG_pure_spinors_Gauss}, we have for all $t\in T_{\mathbb{C}}$
that:
\[
(\widehat{\psi}_{G_{\mathbb{C}}}^{\kappa})_{t}=\begin{cases}
(-1)^{\frac{1}{2}\dim(\mathfrak{t}^{\perp})^{\kappa}}|T^{\kappa}\cap T_{\kappa}|^{-\frac{1}{2}}\cdot t^{\rho}\left(\exp(\varpi)\wedge d\mathrm{vol}_{T_{\kappa}}\right)_{t}, & \text{for }|\kappa|=2;\\
\tfrac{1}{2}t^{\rho}\exp\left(\varpi+\tfrac{\sqrt{3}}{2}d\mathrm{vol}_{T_{\kappa}}\right)_{t}, & \text{for }|\kappa|=3.
\end{cases}
\]
\end{prop}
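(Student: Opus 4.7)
The plan is to prove these formulas by a direct Clifford algebra computation, combining three inputs: the explicit form of $x_{\mathfrak{s}^\kappa}$ from equation (\ref{Eq_Spinor_Gauss-Dirac_Cl-g_Dynkin}), the explicit formula for the Spin lift $\tau(t)$, $t\in T_\mathbb{C}$, established in the proof of \cite[Prop.~4.1]{[AMW00]}, and the translation invariance properties of $\widehat{\psi}^\kappa_{G_\mathbb{C}}$ from Proposition \ref{Prop_TTG_pure_spinors_Gauss}(1). My first step will be to use the invariance $R_t^*\widehat{\psi}^\kappa = L_{t^{-1}}^*\widehat{\psi}^\kappa = t^\rho \widehat{\psi}^\kappa$ to extract the overall scalar $t^\rho$ from the defining equation (\ref{Eq_Gauss-Dirac_Spinor}) at $g=t$, reducing the problem to computing the remaining Clifford data and translating it via $\mathcal{R} = (q\circ\star)^{-1}(\cdot\,\tau(\cdot))$ into a differential form.

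The core of the Clifford calculation rests on the identity $\tau(t) = \prod_{\alpha\in\mathfrak{R}_+}\bigl(1+(t^\alpha-1)e_\alpha e_{-\alpha}\bigr)$ for $t\in T_\mathbb{C}$, where the factors commute because distinct positive roots give commuting idempotents $e_\alpha e_{-\alpha}$ in $\mathrm{Cl}(\mathfrak{g}_\mathbb{C})$. Combined with the telescoping identity $(e_\alpha e_{-\alpha})(1+(t^\alpha-1)e_\alpha e_{-\alpha}) = t^\alpha e_\alpha e_{-\alpha}$ (from $(e_\alpha e_{-\alpha})^2 = e_\alpha e_{-\alpha}$), this collapses the root-vector portion $\bigl(\prod_{\alpha>0}e_\alpha e_{-\alpha}\bigr)\tau(t)$ of $x_{\mathfrak{s}^\kappa}\tau(t)$ into a scalar multiple of $\prod_{\alpha>0}e_\alpha e_{-\alpha}$. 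The Cartan-subalgebra pieces $q(\mathrm{vol}_{\mathfrak{t}^\kappa})$ (and, for $|\kappa|=3$, the exponential $q(e^{(2/\sqrt 3)\mathrm{vol}_{\mathfrak{t}_\kappa}})$) commute with $\tau(t)$ because $\mathrm{Ad}_t$ fixes $\mathfrak{t}$, so they remain as separate factors to be translated.

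Applying $(q\circ\star)^{-1}$ then converts the resulting Clifford expression into a differential form on $G_\mathbb{C}$: the root product $\prod_{\alpha>0}e_\alpha e_{-\alpha}$, via $q^{-1}$ followed by $\star^{-1}$, corresponds up to sign to the left-invariant form $\exp(\varpi)$ on $G_\mathbb{C}$, while the Cartan volumes give $d\mathrm{vol}_{T_\kappa}$ for $|\kappa|=2$ or the exponential $\exp(\tfrac{\sqrt 3}{2}d\mathrm{vol}_{T_\kappa})$ for $|\kappa|=3$. The normalization constants then match: the factor $|T^\kappa\cap T_\kappa|^{-1/2}$ for $|\kappa|=2$ comes from the $2^{-\dim\mathfrak{t}_\kappa/2}$ prefactor in $x_{\mathfrak{s}^\kappa}$ combined with the identity $|T^\kappa\cap T_\kappa|=2^{\dim\mathfrak{t}_\kappa}$ for $|\kappa|=2$, and the sign $(-1)^{\dim(\mathfrak{t}^\perp)^\kappa/2}$ arises from reordering the positive-root factors against the orientation convention for $\mathrm{vol}_{\mathfrak{g}}$.

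The hardest part will be the bookkeeping of signs and scaling factors through the Clifford-to-exterior translation $(q\circ\star)^{-1}$, which is sensitive to the orientation conventions on $\wedge(\mathfrak{g}^\kappa)^\perp$ and on $\wedge \mathfrak{t}_\kappa$. In particular, for $|\kappa|=3$, I will need to verify carefully that the Clifford exponential $q(e^{(2/\sqrt 3)\mathrm{vol}_{\mathfrak{t}_\kappa}})$ together with the $\tfrac{\sqrt 3}{4}$ prefactor in $x_{\mathfrak{s}^\kappa}$ transforms under $(q\circ\star)^{-1}$ into precisely $\tfrac{1}{2}\exp(\tfrac{\sqrt 3}{2}d\mathrm{vol}_{T_\kappa})$ on the differential-form side, which is the step where the noncommutativity between $q$ and $\star$ makes a cross-check via Proposition \ref{Prop_Pure_Spinors_kappa_diagonals} essential.
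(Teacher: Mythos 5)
Your approach is essentially the one the paper indicates in the sentence immediately preceding the statement: combine the explicit formula for $x_{\mathfrak{s}^{\kappa}}$ from equation~(\ref{Eq_Spinor_Gauss-Dirac_Cl-g_Dynkin}), the invariance properties of Proposition~\ref{Prop_TTG_pure_spinors_Gauss}-(1), the formula for $\tau(t)$ from the proof of \cite[Prop.\,4.1]{[AMW00]}, and carry out the direct Clifford computation $\mathcal{R}(x_{\mathfrak{s}^{\kappa}}\tau(t))=(\star^{-1}\circ q^{-1})(x_{\mathfrak{s}^{\kappa}}\tau(t))$. The idempotent-collapse mechanism you describe, based on $(e_{\alpha}e_{-\alpha})^{2}=e_{\alpha}e_{-\alpha}$ and the commutativity of the $e_{\alpha}e_{-\alpha}$ for distinct positive roots, is indeed the engine of the computation, and your identification of where the constants $|T^{\kappa}\cap T_{\kappa}|^{-1/2}$ and $\tfrac{1}{2}$ and the sign $(-1)^{\frac{1}{2}\dim(\mathfrak{t}^{\perp})^{\kappa}}$ originate is correct.

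One thing to watch in writing it out: you describe both ``extracting $t^{\rho}$ via the invariance'' from Proposition~\ref{Prop_TTG_pure_spinors_Gauss}-(1) \emph{and} collapsing the root-vector product against $\tau(t)$ (which itself produces scalars $t^{\alpha}$ from each idempotent). If you use the unnormalized formula $\tau(t)=\prod_{\alpha>0}\bigl(1+(t^{\alpha}-1)e_{\alpha}e_{-\alpha}\bigr)$ and then also invoke the invariance, you risk double-counting the factor $t^{\rho}$, because the Spin lift in \cite{[AMW00]} carries an implicit $t^{-\rho}$ normalization making it a genuine Pin-group element; the invariance property and the $\tau(t)$-collapse are not independent but two faces of the same computation. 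Also, the defining formula~(\ref{Eq_Gauss-Dirac_Spinor}) actually involves $\tau(\kappa^{-1}(g))$ rather than $\tau(g)$, and for $t\in T_{\mathbb{C}}$ not fixed by $\kappa$ the element $\kappa^{-1}(t)$ differs from $t$; this is precisely where the invariance under $L_{t^{-1}}^{\ast}$ and $R_{t}^{\ast}$ is genuinely useful, reducing the computation to $t\in T_{\mathbb{C}}^{\kappa}$ where $\kappa(\rho)=\rho$ gives $(\kappa^{-1}(t))^{\rho}=t^{\rho}$ directly. With those bookkeeping points resolved, the plan reproduces the paper's proof.
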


\subsection{Dirac geometry of fusion\label{Subsec_Dirac_Fusion}}

Given a Lie group $G$, let $\theta^{L,i}$ (resp. $\theta^{R,i}$)
denote the pullback of $\theta^{L}$ (resp. $\theta^{R}$) to the
$i$th component of $G\times G$, let $\mathrm{Mult}:G\times G\to G$
denote the multiplication map, and let $\mathrm{Inv}:G\to G$ denote
inversion. Define the following forms on $G\times G$:
\[
\varsigma:=-\tfrac{1}{2}B(\theta^{L,1},\theta^{R.2})\in\Omega^{2}(G\times G),\ \eta_{G\times G}=\eta^{1}+\eta^{2}\in\Omega^{3}(G\times G),
\]
where $\eta_{G\times G}$ denotes the Cartan 3-form with $\eta^{i}=\frac{1}{12}B([\theta^{L,i},\theta^{L,i}],\theta^{L,i})$.

In the untwisted theory of q-Hamiltonian manifolds, the properties
of the fusion operations are immediate consequences of the fact that:
\[
(\mathrm{Mult},\varsigma):\left(G\times G,E_{G}^{1}\oplus E_{G}^{2},\eta_{G\times G}\right)\dashrightarrow(G,E_{G},\eta)
\]
is a strong Dirac morphism. The purpose of the present section is
to formulate the main results of \cite[\S\S 3.4,4.3]{[AMB09]} in
the context of twisted conjugation. Conceptually, the quickest way
of doing so is to apply the theory developed in \cite[\S\S 3.4,4.3]{[AMB09]}
to the disconnected group $K=G\rtimes\Gamma$, where $\Gamma\subset\mathrm{Aut}(G)$
is a finitely generated subgroup. From this standpoint, the right
multiplication map by $\kappa^{-1}\in\Gamma$ identifies the component
$G\kappa\subset K$ with the group $G$ equipped with the action $\mathrm{Ad}_{G}^{\kappa}$.
In particular, we get explicit equivariant isomorphisms $E_{G}^{\kappa}\cong E_{K}|_{G\kappa}$
and $F_{G}^{\kappa}\cong F_{K}|_{G\kappa}$ which we implicitly use
below.

Given $\kappa_{1},\kappa_{2},\kappa\in\mathrm{Aut}(G)$, let $G\kappa$
denote the group $G$ equipped with the action $\mathrm{Ad}_{G}^{\kappa}$,
and consider the $G$-equivariant maps:
\begin{equation}
\mathrm{Mult}^{\kappa_{1}}:G\kappa_{1}\times G\kappa_{2}\to G\kappa_{1}\kappa_{2},\ \ \mathrm{Mult}^{\kappa_{1}}(a,b)=\left(\mathrm{Mult}\circ(1\times\kappa_{1})\right)(a,b);\label{Eq_Mult_kappa}
\end{equation}
\begin{equation}
\mathrm{Inv}^{\kappa}:G\kappa\longrightarrow G\kappa^{-1},\ \ \mathrm{Inv}^{\kappa}(a)=\left(\kappa^{-1}\circ\mathrm{Inv}\right)(a);\label{Eq_Inv_kappa}
\end{equation}
as well as the 2-form:
\begin{equation}
\varsigma^{\kappa}:=(1\times\kappa)^{\ast}\varsigma\in\Omega^{2}(G\times G).\label{Eq_varsigma_kappa}
\end{equation}
Modifying the proof \cite[Thm.3.9]{[AMB09]} to take into account
the effect of twisting automorphisms, it is easy to establish:
\begin{prop}
\label{Prop_Twisted_Mult_Inv}Let $G$ be a Lie group with $\kappa,\kappa_{1},\kappa_{2}\in\mathrm{Aut}(G)$.
With the notation of this section, the multiplication map extends
to a Dirac morphism:
\[
(\mathrm{Mult}^{\kappa_{1}},\varsigma^{\kappa_{1}}):(G\kappa_{1}\times G\kappa_{2},E_{G}^{\kappa_{1},1}\oplus E_{G}^{\kappa_{2},2},\eta_{G\times G})\dasharrow(G\kappa_{1}\kappa_{2},E_{G}^{\kappa_{1}\kappa_{2}},\eta),
\]
where in terms of the map in equation (\ref{Eq_e_kappa_trivialization}):
\[
E_{G}^{\kappa_{1},1}\oplus E_{G}^{\kappa_{2},2}=\left\{ \mathsf{e}^{\kappa_{1},1}(\xi)+\mathsf{e}^{\kappa_{2},2}(\zeta)\ \big|\ \xi,\zeta\in\mathfrak{g}\right\} .
\]
Similarly, the inversion map extends to a Dirac morphism:
\[
(\mathrm{Inv}^{\kappa},0):(G\kappa,E_{G}^{\kappa},\eta)\dasharrow(G\kappa^{-1},(E_{G}^{\kappa^{-1}})^{\intercal},-\eta),
\]
where $(E_{G}^{\kappa^{-1}})^{\intercal}=\left\{ X\oplus-\alpha\ \big|\ X\oplus\alpha\in E_{G}^{\kappa^{-1}}\right\} .$
\end{prop}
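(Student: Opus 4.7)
The plan is to reduce both claims to the untwisted result [AMB09, Thm.3.9] by exploiting the disconnected group $K = G \rtimes \Gamma$, where $\Gamma \subseteq \mathrm{Aut}(G)$ is a finitely generated subgroup containing $\kappa, \kappa_1, \kappa_2$. The paragraph preceding the proposition already asserts the equivariant isomorphisms $E_G^\kappa \cong E_K|_{G\kappa}$ and $F_G^\kappa \cong F_K|_{G\kappa}$ under the identification $\iota_\kappa \colon G \hookrightarrow K$, $g \mapsto g\kappa$; I would make this precise first and then transport the untwisted statements component-by-component.

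To set up the identification, I would compute directly in the semidirect product that
\[
\iota_\kappa^* \theta^L_K = \kappa^{-1} \circ \theta^L_G, \qquad \iota_\kappa^* \theta^R_K = \theta^R_G.
\]
Combined with the $\mathrm{Aut}(G)$-invariance of $B$, this yields $\iota_\kappa^* \eta_K = \eta_G$, and a direct substitution into the definition of $\mathsf{s}$ shows that $\iota_\kappa^*$ carries $E_K|_{G\kappa}$ and $F_K|_{G\kappa}$ onto $E_G^\kappa$ and $F_G^\kappa$ respectively (compare Example \ref{Ex_Cartan-Dirac_F_kappa}). At this point both halves of the proposition reduce to restrictions of the untwisted theorem.

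For multiplication, the untwisted product $\mathrm{Mult}_K$ sends $(g_1\kappa_1, g_2\kappa_2) \in G\kappa_1 \times G\kappa_2$ to $g_1 \kappa_1(g_2)\,\kappa_1\kappa_2 \in G\kappa_1\kappa_2$, which under the identifications coincides with $\mathrm{Mult}^{\kappa_1}(g_1, g_2) = g_1 \kappa_1(g_2)$. The pullback of the untwisted 2-form $\varsigma_K = -\tfrac{1}{2}B(\theta^{L,1}_K, \theta^{R,2}_K)$ by $\iota_{\kappa_1} \times \iota_{\kappa_2}$ becomes $-\tfrac{1}{2}B(\kappa_1^{-1}\theta^{L,1}_G, \theta^{R,2}_G) = -\tfrac{1}{2}B(\theta^{L,1}_G, \kappa_1\theta^{R,2}_G) = \varsigma^{\kappa_1}$, using $\mathrm{Aut}(G)$-invariance of $B$ and the identity $\kappa^* \theta^R = \kappa \circ \theta^R$. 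Applying [AMB09, Thm.3.9] to $K$ and restricting gives the first claim.

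For inversion, the untwisted $\mathrm{Inv}_K$ sends $g\kappa$ to $(g\kappa)^{-1} = \kappa^{-1}(g^{-1})\kappa^{-1}$, which under the identifications is precisely $\mathrm{Inv}^\kappa(g) = \kappa^{-1}(g^{-1})$, and one checks that the image of $E_K^\intercal|_{G\kappa^{-1}}$ under the component identification is $(E_G^{\kappa^{-1}})^\intercal$. Restricting the untwisted Dirac morphism $(\mathrm{Inv}_K, 0)\colon (K, E_K, \eta) \dashrightarrow (K, E_K^\intercal, -\eta)$ then yields the second claim. I expect the only real obstacle to be systematic bookkeeping: keeping track of how each twisting automorphism interacts with the pullbacks of $\theta^L, \theta^R$, with the $\mathrm{Aut}(G)$-invariance of $B$, and with the sign conventions implicit in $(\cdot)^\intercal$ and $-\eta$ under the component identifications.
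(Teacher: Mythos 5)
Your proposal is correct and takes essentially the same approach as the paper: the text immediately preceding the proposition explicitly describes applying the untwisted theory of \cite[\S\S 3.4,4.3]{[AMB09]} to the disconnected group $K=G\rtimes\Gamma$ and restricting to the components $G\kappa\subset K$ via the identifications $E_G^\kappa\cong E_K|_{G\kappa}$, which is exactly the reduction you carry out (the paper merely asserts the result, while you supply the bookkeeping for $\iota_\kappa^*\theta^L_K$, $\iota_\kappa^*\theta^R_K$, and the pullback of $\varsigma_K$).
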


\begin{rem}
\label{Rk_tq-Ham_Fusion_Inversion}Recall that the composition of
Dirac morphisms $(\Phi_{i},\omega_{i})$ ($i=1,2$) is given by:
\[
(\Phi_{2},\omega_{2})\circ(\Phi_{1},\omega_{1})=(\Phi_{2}\circ\Phi_{1},\omega_{1}+\Phi_{1}^{\ast}\omega_{2}).
\]
\begin{enumerate}

\item With the notation of Proposition \ref{Prop_Def_Internal_Fusion},
let $(M,\omega,\Phi)$ be a $G\kappa_{1}\times G\kappa_{2}\times H\tau$-valued
tq-Hamiltonian manifold. Proposition \ref{Prop_Def_Internal_Fusion}
follows from the fact that:
\[
(\Phi_{\mathrm{fus}},\omega_{\mathrm{fus}})=(\mathrm{Mult}^{\kappa_{1}},\varsigma^{\kappa_{1}})\circ(\Phi,\omega)
\]
is a strong Dirac morphism by the previous proposition. Proposition
\ref{Prop_Def_Fusion} is obtained as the special case where $H=\{e\}$
and $(M,\omega,\Phi)=(M_{1}\times M_{2},\omega_{1}+\omega_{2},\Phi_{1}\times\Phi_{2})$,
where $(M_{i},\omega_{i},\Phi_{i})$ are $G\kappa_{i}$-valued tq-Hamiltonian
manifolds.

\item Similarly, the last proposition implies Proposition \ref{Prop_Inversion}
since the composition:
\[
(\Phi^{-},-\omega)=(\mathrm{Inv}^{\kappa},0)\circ(\Phi,\omega)
\]
gives a strong Dirac morphism for any $G\kappa$-valued tq-Hamiltonian
manifold $(M,\omega,\Phi)$.

\end{enumerate}
\end{rem}

The next topic we address is the Liouville form of a fusion product.
Regarding the pure spinor $\psi_{G}^{\kappa}\in\Omega(G)$ defining
the Lagrangian complement $F_{G}^{\kappa}\subset\mathbb{T}G$, we
have the following generalization of \cite[Thm.4.9]{[AMB09]}:
\begin{prop}
\label{Prop_Pullback_Mult_Pure_Spinor}Let $G$ be a Lie group, and
$\kappa_{1},\kappa_{2}\in\mathrm{Aut}(G)$ two automorphisms. The
pullback of $\psi_{G}^{\kappa_{1}\kappa_{2}}\in\Omega(G)$ under the
map $\mathrm{Mult}^{\kappa_{1}}:G\times G\rightarrow G$ satisfies
the equation:
\begin{equation}
\exp(\varsigma^{\kappa_{1}})(\mathrm{Mult}^{\kappa_{1}})^{\ast}(\psi_{G}^{\kappa_{1}\kappa_{2}})=\varrho\left(\exp\left(-\hat{\gamma}\right)\right)\cdot\left(\psi_{G}^{\kappa_{1},1}\otimes\psi_{G}^{\kappa_{2},2}\right),\label{Eq_Pullback_Mult_Pure_Spinor}
\end{equation}
where for any orthonormal basis $\{v_{i}\}\subset\mathfrak{g}$, the
bivector $\hat{\gamma}\in\Gamma\left(\wedge^{2}(E_{G}^{\kappa_{1},1}\oplus E_{G}^{\kappa_{2},2})\right)$
is given by:
\[
\hat{\gamma}=\tfrac{1}{2}\sum_{i}\mathsf{e}^{\kappa_{1},1}(v_{i})\wedge\mathsf{e}^{\kappa_{2},2}(v_{i}).
\]
\end{prop}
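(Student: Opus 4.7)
The plan is to mimic the proof of the untwisted analogue \cite[Thm.4.9]{[AMB09]}, tracking the effect of the automorphisms $\kappa_1,\kappa_2$ throughout. Conceptually, by Proposition \ref{Prop_Twisted_Mult_Inv} the pair $(\mathrm{Mult}^{\kappa_1},\varsigma^{\kappa_1})$ is a strong Dirac morphism into $(G\kappa_1\kappa_2,E_G^{\kappa_1\kappa_2},\eta)$, so Proposition \ref{Prop_DiracMorph_VolForm} identifies the left-hand side $\exp(\varsigma^{\kappa_1})(\mathrm{Mult}^{\kappa_1})^{\ast}\psi_G^{\kappa_1\kappa_2}$ as a pure spinor for the backward image of $F_G^{\kappa_1\kappa_2}$ in $\mathbb{T}(G\times G)$, which is a Lagrangian complement to $E_G^{\kappa_1,1}\oplus E_G^{\kappa_2,2}$. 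The right-hand side is the B-field shift of $F_G^{\kappa_1,1}\oplus F_G^{\kappa_2,2}$ by the bivector $\hat\gamma\in\Gamma(\wedge^2(E_G^{\kappa_1,1}\oplus E_G^{\kappa_2,2}))$, and is also a Lagrangian complement to the same Cartan-Dirac structure. The proposition thus reduces to identifying these two Lagrangians in $\mathbb{T}(G\times G)$ and matching the normalizations of the pure spinors.

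To carry out the identification I would move the computation to $\mathrm{Cl}(\mathfrak{g})\otimes\mathrm{Cl}(\mathfrak{g})$ via the trivialization $\mathcal{R}$, using its intertwining properties in Proposition \ref{Prop_Properties_mathcal_R}. Decomposing $\mathrm{Mult}^{\kappa_1}=\mathrm{Mult}\circ(1\times\kappa_1)$ and $\varsigma^{\kappa_1}=(1\times\kappa_1)^{\ast}\varsigma$, I would first apply the untwisted pullback identity \cite[Thm.4.9]{[AMB09]} to $\mathrm{Mult}^{\ast}\psi_G^{\kappa_1\kappa_2}$ at $(g_1,\kappa_1(g_2))$, obtaining an expression involving the untwisted bivector $\tfrac{1}{2}\sum_i\mathsf{e}^{1,1}(v_i)\wedge\mathsf{e}^{\kappa_1\kappa_2,2}(v_i)$ together with $\psi_G^{1,1}\otimes\psi_G^{\kappa_1\kappa_2,2}$. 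Pulling back through $1\times\kappa_1$ should then convert these factors into $\psi_G^{\kappa_1,1}\otimes\psi_G^{\kappa_2,2}$ and into the claimed bivector $\hat\gamma$, using the equivariance \eqref{Eq_R_Equivariance} of $\mathcal{R}$ and the observation that $\kappa_1^{\ast}$ relates $F_G^{\kappa_1\kappa_2}$ to $F_G^{\kappa_2}$ up to the Pin-lift $\tau(\kappa_1)$.

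A conceptual shortcut worth cross-checking is to embed everything into the disconnected group $K=G\rtimes\Gamma$ with $\Gamma\supseteq\langle\kappa_1,\kappa_2\rangle$, and apply the untwisted \cite[Thm.4.9]{[AMB09]} directly to the multiplication map $K\times K\to K$. The components $G\kappa_i\subseteq K$ are open and closed, and under the canonical equivariant isomorphisms $E_G^{\kappa}\cong E_K|_{G\kappa}$ and $F_G^{\kappa}\cong F_K|_{G\kappa}$ mentioned at the beginning of section \ref{Subsec_Dirac_Fusion}, the restriction of the untwisted identity on $K$ to $G\kappa_1\times G\kappa_2\subseteq K\times K$ produces exactly equation \eqref{Eq_Pullback_Mult_Pure_Spinor}. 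This viewpoint also clarifies why the 2-form appearing on the left should be $\varsigma^{\kappa_1}$ rather than $\varsigma$: the map $\mathrm{Mult}^{\kappa_1}$ is precisely the restriction of group multiplication in $K$ to the $G\kappa_i$ components.

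The main obstacle is bookkeeping rather than conceptual. One must track carefully the signs contributed by the Pin lifts $\tau(\kappa_i)$ and by the $\mathbb{Z}_2$-grading in the equivariance formula \eqref{Eq_R_Equivariance}, and one must verify cleanly that under $(1\times\kappa_1)^{\ast}$ the untwisted bivector is transformed into $\hat\gamma=\tfrac{1}{2}\sum_i\mathsf{e}^{\kappa_1,1}(v_i)\wedge\mathsf{e}^{\kappa_2,2}(v_i)$. This last step is the direct analogue of how the classical $r$-matrix was handled in Example \ref{Ex_Gauss-Dirac} and in Proposition \ref{Prop_TTG_pure_spinors_Gauss}(2); it is the precise place where the twist data enters, and where the twisted trivializations $\mathsf{e}^{\kappa}$ of \eqref{Eq_e_kappa_trivialization} replace their untwisted counterparts.
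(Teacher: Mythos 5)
Your third paragraph, the ``conceptual shortcut worth cross\-/checking,'' is in fact exactly the paper's proof: one applies the untwisted identity \cite[Thm.4.9]{[AMB09]} to the disconnected group $K=G\rtimes\langle\kappa_1,\kappa_2\rangle$, obtaining
\[
e^{\varsigma_K}\mathrm{Mult}_{K\times K}^{\ast}\psi_K=\varrho\Big(\exp\big(-\tfrac12\textstyle\sum_i \mathsf{e}_K^1(v_i)\wedge\mathsf{e}_K^2(v_i)\big)\Big)\cdot(\psi_K^1\otimes\psi_K^2),
\]
and then restricts to $G\kappa_1\times G\kappa_2\subset K\times K$, identifying each $G\kappa_i$ with $G$ by right translation by $\kappa_i^{-1}$ so that $E_K|_{G\kappa_i}\cong E_G^{\kappa_i}$, $F_K|_{G\kappa_i}\cong F_G^{\kappa_i}$. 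You should promote this from a cross\-/check to the primary argument: it sidesteps all of the sign and Pin\-/lift bookkeeping you flag at the end, since the twists never have to be threaded through $(1\times\kappa_1)^{\ast}$ at all.

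Your other route --- factoring $\mathrm{Mult}^{\kappa_1}=\mathrm{Mult}\circ(1\times\kappa_1)$, applying the untwisted identity on $G\times G$ to $\mathrm{Mult}^{\ast}\psi_G^{\kappa_1\kappa_2}$, and then pushing through $(1\times\kappa_1)^{\ast}$ --- would also reach the same formula, but the obstacle you correctly identify is real: $(1\times\kappa_1)^{\ast}$ converts $\psi_G^{\kappa_1\kappa_2,2}$ into $\psi_G^{\kappa_2,2}$ only after conjugating by $\tau(\kappa_1)$ and tracking the $(-1)^{|a|(|g|+|x|)}$ parity factors of \eqref{Eq_R_Equivariance}, and the untwisted bivector $\tfrac12\sum_i\mathsf{e}^{1,1}(v_i)\wedge\mathsf{e}^{\kappa_1\kappa_2,2}(v_i)$ must be matched against $\hat\gamma$ under the same transformation. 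None of that is needed once the calculation is done once and for all on $K$. One small remark on your first paragraph: the right\-/hand side of \eqref{Eq_Pullback_Mult_Pure_Spinor} is a bivector (or $\beta$\-/field) transform of $F_G^{\kappa_1,1}\oplus F_G^{\kappa_2,2}$ by $\hat\gamma\in\Gamma(\wedge^2(E_G^{\kappa_1,1}\oplus E_G^{\kappa_2,2}))$, not a B\-/field shift; the conceptual picture of matching two Lagrangian complements and their defining spinors is otherwise correct.
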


\noindent\emph{Outline of proof.} With the notation of the proposition,
let $K=G\rtimes\langle\kappa_{1},\kappa_{2}\rangle$. Applying \cite[Thm.4.9]{[AMB09]}
to the disconnected group $K$ yields:
\begin{equation}
e^{\varsigma_{K}}\mathrm{Mult}_{K\times K}^{\ast}\psi_{K}=\varrho\Big(\exp(-\tfrac{1}{2}\sum_{i}\mathsf{e}_{K}^{1}(v_{i})\wedge\mathsf{e}_{K}^{2}(v_{i}))\Big)\cdot\left(\psi_{K}^{1}\otimes\psi_{K}^{2}\right),\label{Eq_ABM09_Thm4.9}
\end{equation}
where $\psi_{K}=\mathcal{R}\left(q(\mathrm{vol}_{\mathfrak{g}})\right)$
defines the Lagrangian complement of the untwisted Cartan-Dirac structure
$E_{K}\subset\mathbb{T}K$, and $\mathsf{e}_{K}:\mathfrak{g}\to E_{K}$
is the untwisted analogue to the trivialization in equation (\ref{Eq_e_kappa_trivialization}).
The equation in the statement is then obtained by first restricting
equation (\ref{Eq_ABM09_Thm4.9}) to the outer component $G\kappa_{1}\times G\kappa_{2}\subset K\times K$,
and then identifying $G\kappa_{1}\times G\kappa_{2}$ with $G\times G$
using the right multiplication maps by $\kappa_{i}^{-1}$. \qed

\hfill

We can now state:
\begin{cor}
\label{Cor_Fusion_prod_Liouville}For $i=1,2$, let $(M_{i},\omega_{i},\Phi_{i})$
be $G\kappa_{i}$-valued tq-Hamiltonian $G$-spaces. The Liouville
form of the fusion product $(M_{1}\circledast M_{2},\omega_{\mathrm{fus}},\Phi_{\mathrm{fus}})$
is given by:
\[
\Lambda_{M_{1}\circledast M_{2}}=\Lambda_{M_{1}}\otimes\Lambda_{M_{2}}\in\Omega^{[\mathrm{top}]}(M_{1}\times M_{2})^{G}.
\]
\end{cor}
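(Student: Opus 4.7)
The plan is to reduce the identity to a standard compatibility between Clifford actions and pullback along strong Dirac morphisms. Unpacking the definition of $\Lambda_{M_1\circledast M_2}$ via $\omega_{\mathrm{fus}} = \omega_1 + \omega_2 + (\Phi_1 \times \Phi_2)^* \varsigma^{\kappa_1}$ and $\Phi_{\mathrm{fus}} = \mathrm{Mult}^{\kappa_1} \circ (\Phi_1 \times \Phi_2)$, I would first write
\[
\Lambda_{M_1\circledast M_2} = \Bigl(e^{\omega_1 + \omega_2}(\Phi_1 \times \Phi_2)^*\bigl[e^{\varsigma^{\kappa_1}}(\mathrm{Mult}^{\kappa_1})^*\psi_G^{\kappa_1\kappa_2}\bigr]\Bigr)_{[\mathrm{top}]}
\]
and apply Proposition \ref{Prop_Pullback_Mult_Pure_Spinor} to replace the bracketed factor by $\varrho(\exp(-\hat\gamma)) \cdot (\psi_G^{\kappa_1,1} \otimes \psi_G^{\kappa_2,2})$, with $\hat\gamma = \tfrac{1}{2}\sum_i \mathsf{e}^{\kappa_1,1}(v_i) \wedge \mathsf{e}^{\kappa_2,2}(v_i)$.

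The technical engine is the following compatibility lemma, which I would establish by a short direct computation using $\iota_Y(e^\omega) = (\iota_Y\omega) \wedge e^\omega$ and $\iota_Y \Phi^* = \Phi^* \iota_{\Phi_* Y}$: for a strong Dirac morphism $(\Phi, \omega) \colon (M, TM, 0) \dashrightarrow (N, F, \chi)$, if $Y \oplus 0 \in TM$ is the lift of $X \oplus \alpha \in F$, then
\[
\iota_Y\bigl(e^\omega \wedge \Phi^*\psi\bigr) = e^\omega \wedge \Phi^*\bigl(\varrho(X \oplus \alpha)\psi\bigr)
\]
for every $\psi \in \Omega(N)$. By the definitions of $\mathsf{e}^{\kappa_j,j}$ and the moment map conditions for the $\omega_j$, each generator $\mathsf{e}^{\kappa_j,j}(v_i) \in E_G^{\kappa_j,j}$ lifts under the Dirac morphism $(\Phi_1 \times \Phi_2, \omega_1 + \omega_2)$ to the pure vector field $(v_i)_{M_j,j}$ (with trivial cotangent part, since the source Dirac structure is $T(M_1 \times M_2)$). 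Because $E_G^{\kappa_1,1}$ and $E_G^{\kappa_2,2}$ sit in orthogonal summands of $\mathbb{T}(G \times G)$, and similarly their lifts sit in orthogonal summands of $\mathbb{T}(M_1 \times M_2)$, iterated application of the lemma yields the exponentiated form
\[
e^{\omega_1 + \omega_2}(\Phi_1 \times \Phi_2)^*\bigl[\varrho(\exp(-\hat\gamma))(\psi_G^{\kappa_1,1} \otimes \psi_G^{\kappa_2,2})\bigr] = \varrho(\exp(-\tilde\gamma))\Bigl(e^{\omega_1 + \omega_2}(\Phi_1 \times \Phi_2)^*(\psi_G^{\kappa_1,1} \otimes \psi_G^{\kappa_2,2})\Bigr),
\]
with $\tilde\gamma = \tfrac{1}{2}\sum_i (v_i)_{M_1,1} \wedge (v_i)_{M_2,2}$.

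To finish, I observe that $\tilde\gamma$ has trivial cotangent component, so $\varrho(\tilde\gamma)$ is a sum of double contractions by vector fields, and $\varrho(\exp(-\tilde\gamma))$ therefore acts as the identity on top-degree forms (its higher-order terms produce only corrections of strictly lower degree). Taking the $[\mathrm{top}]$ part and exploiting the factorizations $(\Phi_1 \times \Phi_2)^*(\psi_G^{\kappa_1,1} \otimes \psi_G^{\kappa_2,2}) = \Phi_1^*\psi_G^{\kappa_1} \wedge \Phi_2^*\psi_G^{\kappa_2}$ and $e^{\omega_1 + \omega_2} = e^{\omega_1} \wedge e^{\omega_2}$, the top form splits as $\Lambda_{M_1} \wedge \Lambda_{M_2}$, which is the content of $\Lambda_{M_1} \otimes \Lambda_{M_2}$ under the standard identification of top forms on a product. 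The main technical hurdle will be verifying the exponentiated compatibility lemma carefully in its Clifford-algebraic form: one must check that the Clifford products among the summands of $\hat\gamma$ on $G \times G$ correspond, after lifting, to those of $\tilde\gamma$ on $M_1 \times M_2$, which depends on the factor-wise orthogonality being preserved under the lifting procedure and on the resulting operators commuting with $e^\omega \Phi^*$.
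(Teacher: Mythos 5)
Your proof is correct and follows essentially the same route as the paper: unpack $\omega_{\mathrm{fus}}$ and $\Phi_{\mathrm{fus}}$, apply Proposition \ref{Prop_Pullback_Mult_Pure_Spinor}, commute the Clifford action by $\exp(-\hat\gamma)$ through $e^{\omega_1+\omega_2}(\Phi_1\times\Phi_2)^*$ to turn it into an exponentiated double contraction, and take the top-degree part. The compatibility lemma you spell out is precisely the step the paper handles by citing \cite[Prop.5.15]{[AMB09]}, so you have simply filled in the detail rather than taken a different path.
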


\begin{proof}
Similarly to \cite[Prop.5.15]{[AMB09]}, the previous proposition
gives the computation:
\[
\begin{array}{ccc}
e^{\omega_{\mathrm{fus}}}\Phi_{\mathrm{fus}}^{\ast}\psi_{G}^{\kappa_{1}\kappa_{2}} & = & e^{\omega_{1}+\omega_{2}}(\Phi_{1}\times\Phi_{2})^{\ast}\left(e^{\varsigma^{\kappa_{1}}}(\mathrm{Mult}^{\kappa_{1}})^{\ast}(\psi_{G}^{\kappa_{1}\kappa_{2}})\right)\\
 & = & e^{\omega_{1}+\omega_{2}}(\Phi_{1}\times\Phi_{2})^{\ast}\left(\varrho(e^{-\hat{\gamma}})\cdot(\psi_{G}^{\kappa_{1},1}\otimes\psi_{G}^{\kappa_{2},2})\right)\\
 & = & \exp\left(-\iota\left(\tfrac{1}{2}\sum_{i}(v_{i})_{M_{1}}\wedge(v_{i})_{M_{2}}\right)\right)\left(e^{\omega_{1}}\Phi_{1}^{\ast}\psi_{G}^{\kappa_{1},1}\otimes e^{\omega_{2}}\Phi_{2}^{\ast}\psi_{G}^{\kappa_{2},2}\right).
\end{array}
\]
The claim follows by taking the top degree part on both sides, since
in the RHS, the term $\Lambda_{M_{1}}\otimes\Lambda_{M_{2}}$ is unaffected
by the exponential of the interior product.
\end{proof}
\bibliographystyle{amsplain}
\bibliography{TMDH}

\end{document}